\newtheorem{theorem}{Theorem}[section]
\newtheorem{proposition}[theorem]{Proposition}
\newtheorem{corollary}[theorem]{Corollary}
\newtheorem{lemma}[theorem]{Lemma}
\newtheorem{remark}[theorem]{Remark}
\newtheorem{assumption}[theorem]{Assumption}
\newcommand{\R}{\mathbb{R}} 
\newcommand{\Q}{\mathbb{Q}} 
\renewcommand{\P}{\mathbb{P}}
\newcommand{\cA}{\mathcal{A}}
\newcommand{\bell}{\boldsymbol{\ell}}
\newcommand{\q}{\boldsymbol{q}}
\newcommand{\pimin}{\pi_{{\tiny min}}}
\newcommand{\wmax}{w_{{\tiny max}}}
\newcommand{\wmin}{w_{{\tiny min}}}
\newcommand{\E}{\mathbb{E}} 
\renewcommand{\l}{\ell}
\newcommand{\FDR}{\mbox{FDR}}
\newcommand{\maFDR}{\mbox{{M}FDR}}
\newcommand{\TDR}{\mbox{TDR}}
\newcommand{\mtc}{\mathcal}
\newcommand{\wh}[1]{{\widehat{#1}}}
\newcommand{\ind}[1]{{\mathds{1}\{#1\}}}
\newcommand{\e}[1]{\mbe\brac{#1}}
\newcommand{\eps}{\varepsilon}
\newcommand{\FDP}{\mbox{FDP}}
\newcommand{\pizero}{\boldsymbol{\pi}_0}
\newcommand{\pizerohat}{\boldsymbol{\widehat{\pi}}_0}
\newcommand{\pione}{\boldsymbol{\pi}_1}
\renewcommand{\Q}{\boldsymbol{Q}}
\newcommand{\varphiVEM}{\varphi^{\mbox{\tiny VEM}}}
\newcommand{\varphisimple}{\varphi^{\mbox{\tiny $Z$}}}
\renewcommand{\e}{\mathrm{e}}
\begin{document}

\begin{frontmatter}
\title{Graph inference with clustering and false discovery rate control}
\runtitle{Graph inference with FDR control}

\begin{aug}
\author{\fnms{Tabea} \snm{Rebafka}\ead[label=e1]{tabea.rebafka@upmc.fr}},
\author{\fnms{Etienne} \snm{Roquain}\ead[label=e2]{etienne.roquain@upmc.fr}}
\and
\author{\fnms{Fanny} \snm{Villers}\ead[label=e3]{fanny.villers@upmc.fr}}
\address{
Sorbonne Universit\'e, Universit\'e de Paris, CNRS, \\
 Laboratoire de Probabilit\'es, Statistique    et    Mod\'elisation, Paris, France.\\ 
  \printead{e1}
\printead{e2}
\printead{e3}
}
\end{aug}

\begin{abstract}
In this paper, a noisy version of the stochastic block model (NSBM) is introduced and we investigate the three following statistical inferences in this model: estimation of the model parameters, clustering of the nodes  and identification of the underlying graph. While the two first inferences are done by using a variational expectation-maximization (VEM) algorithm, the graph inference is done  by controlling the false discovery rate (FDR), that is, the average proportion of errors among the edges declared  significant, and by maximizing the true discovery rate (TDR), that is, the average proportion of edges declared  significant among the true edges. 
Provided that the VEM algorithm provides reliable parameter estimates and clustering, we theoretically show that our procedure does control the FDR  while satisfying an optimal TDR property, up to remainder terms that become small when the size of the graph grows. 
Numerical experiments show that our method outperforms the classical FDR controlling methods that ignore the underlying SBM topology.
In addition, these simulations demonstrate that the FDR/TDR properties of our method are robust to model mis-specification, that is,  are essentially maintained outside our model.
\end{abstract}


\begin{keyword}
\kwd{Stochastic block model}\kwd{graph inference}\kwd{false discovery rate}\kwd{multiple testing}\kwd{$q$-value}
\end{keyword}

\end{frontmatter}

\tableofcontents

\section{Introduction}\label{sec:intro}

\subsection{Context}


Network analysis is concerned with modeling and describing the interactions of a given population of individuals.
Networks arise in a large variety of domains as social, biological or information sciences, just to name a few.
In such applications, an essential  task is to infer a reliable version of the network. For instance, when a dense network is observed, this one is often considered to be a perturbation of some underlying less dense graph, and we should remove the edges that are only due to "noise". Once the graph is inferred, a deeper analysis can be done to describe the  communication structures of the network, for instance by detecting the communities, that is,  clustering of the nodes in groups with similar connection behavior. 
The literature provides many different clustering algorithms as $k$-means, hierarchical clustering,  
random walk algorithms,  spectral clustering, modularity maximization and likelihood methods, without being exhaustive. 

To be more concrete, consider the widespread example of clustering a set of common data points with pairwise distances or similarities.
Typically, a similarity graph is first constructed, then a  clustering algorithm, as for instance spectral clustering based on the graph Laplacian, is applied providing a partition of  the data points into groups with high intra-group similarity and low inter-group similarity.
There are several ways to construct  similarity graphs: one can just use the fully connected graph, where the adjacency matrix is defined by all pairwise similarities.  However, it is more common to use a sparser version of the graph, e.g. a $\varepsilon$-neighborhood graph obtained by  thresholding similarities, or a $k$-nearest neighbor graph, where for a given node only the edges to the $k$ nodes with highest similarities are conserved. It is well known that the specific choice of the similarity graph for the  clustering procedure has a strong influence on the clustering result, and the appropriate choice of the connectivity parameters (the neighborhood threshold $\varepsilon$ or the number of neighbors $k$) is still a headache \citep{Luxburg07}.

In general,  such  two-stage procedures, where   graph inference and clustering are treated separately, may not be optimal as both tasks are very interrelated. That is, the inferred graph has a considerable impact on the obtained clustering, and conversely, using the cluster memberships may improve graph inference. 
Unifying these inferences is one important motivation for our work and it relies on considering an appropriate  probabilistic network model.

\subsection{Stochastic block model}

A popular random graph model for clustering is the stochastic bloc model (SBM)  \citep{Holland1983}, that models network heterogeneity by varying connecting behavior of different groups of nodes.  
More precisely, each node is supposed to belong to exactly one group and the edge probability of a pair of nodes depends entirely on the group membership of these two nodes. Thus, clustering becomes the problem of estimating the group memberships in the stochastic block model. 
Furthermore, in some sense, using the stochastic block model corresponds to summarizing   a complex network by a meta-network by grouping vertices to  a few meta-vertices with a few meta-edges without losing too much information. 
It is noteworthy that SBM may detect more complex connecting schemes than simple communities (as bipartite graph) and so, is able to  describe a wide spectrum of graph topologies.
We refer the reader to \cite{NowickiSnijders2001} and \cite{Picard2009} for applications of this idea to  social and biological networks. 
Many variants of the SBM have been developed in the literature (e.g., weighted \cite{MatiasRobin2014}, valued \cite{MRV2010}, overlapping \cite{Lat2014} or dynamic \cite{MRV2018}, among others). 

As in most latent variable models,  parameter estimation is a difficult task in  the SBM. Due to  the complex dependency structure in the graph, the classical 
EM algorithm \citep{DempsterLR}  does not apply, but a variational EM algorithm has been proposed 
to approach the maximum likelihood estimator and estimate group memberships \citep{Daudin2008}. 
While the
EM algorithm  is known to  converge to the maximum likelihood estimator under appropriate assumptions \citep{wu1983},  this property is  in general lost when 
adding variational approximations.  However,  in the case of  the SBM,  variational estimators can be shown to be consistent and asymptotically equivalent to the maximum likelihood estimators  \citep{Celisse_etal}.

We consider in Section~\ref{sec:model} a  variant of  SBM which is suitable for simultaneously inferring the clustering and the graph:
the  so-called noisy stochastic block model (NSBM). In this model, we do not observe the graph, which is itself a latent structure, but only a noisy version of it, with the following blurring mechanism:  in place of missing edges,  pure random noise is observed, and in place of present edges, we observe an effect, whose intensity depends on the group memberships of the nodes in the latent graph. 
We develop in Section~\ref{sec:VEMintro} a VEM algorithm that aims at estimating the model parameters. By using a simple maximum a posteriori criterion, this also provides an estimation of the (latent) clustering. This method has an interest on its own in applications where only a clustering of the nodes is desired. The advantage of our method with respect to most standard methods
 does not require the selection of some connectivity parameters. 
Here, in addition, we take advantage of this clustering to improve the accuracy of the graph inference.

\subsection{False discovery rate}

Let us first mention that graph inference is a task with a long history, especially in the case where when one tries to estimate the marginal correlation or partial correlation between node observations.  In that case, a Gaussian graphical model \cite{Lau1996} is often used, and one estimates either the correlation matrix (marginal correlations) or the precision matrix (partial correlation). In the literature, this  task is classically done by "graphical lasso" type approaches \cite{Mein2006}, \cite{Fri2007}, \cite{Ban2008}, \cite{Rav2011}.

However, when inferring a graph, adding a non-existing edge between two nodes is in many applications more problematic than missing an existing edge, especially for sparse graphs. To this respect,  the practitioner thus wants to avoid false positives, that is, edges that are wrongly declared significant. We thus adopt a multiple hypothesis testing formulation of the graph reconstruction problem. Markedly, the number of null hypotheses to test can be particularly high:  $m=n(n+1)/2$ where $n$ is the number of nodes.

In large scale multiple testing, a popular method is the Benjamini Hochberg procedure (BH), introduced in \cite{BH1995} and widely popularized afterwards, which controls the false discovery rate (FDR), defined as the averaged proportion of errors among the items declared as significant. 
Among the abundant literature in that area, a successful modeling is to assume that the observations follow a mixture model  \cite{ETST2001}, which allows to exploit the dimension of the data to fit crucial quantities as the null distribution \cite{Efron2004}, and the alternative distribution \cite{SC2007}, which allow a better multiple testing inference, both in terms of FDR and power (items correctly declared as significant).

More sophisticated model mixture have then been be considered, that incorporate some underlying (latent or known) structure of the null hypotheses. While  FDR control under dependence is known to be a challenging issue, as the most classical results use independence or positive dependence of the test statistics \cite{BH1995,BY2001}, these models circumvent this difficulty by assuming that the test statistics are independent conditionally on the structure. 
Former studies include group structure \cite{SC2009} and Markov structures \cite{CS2009,LZP2016}. These methods have the strong advantage to both control the FDR under dependencies while allowing more detections than procedures ignoring the structure, as the BH procedure do. 
In this paper, we follow this general line of research by controlling the FDR in the noisy SBM. 

\subsection{Presentation of the paper}

The contributions of this paper are as follows:
\begin{itemize}
\item We develop a VEM type approach to estimate the NSBM parameters, which also leads to a clustering. It  also estimates the posterior probabilities that there is no edge between each node couple $(i,j)$, that we will called the $\l$-values;
\item We combine suitably these $\l$-values by adapting procedures of the multiple testing literature in mixture models, and notably through a $q$-value-based approach \cite{Storey2003,CR2018};
\item Combining these two approaches leads to a new procedure for inferring both the graph and the clustering, with a clear interpretation in terms of false positives: among the edges discovered by the procedure, there are, on average, at most $5\%$ (say) of errors;
\item The theoretical validity in terms of false/true positives is established via careful model assumptions and concentration inequalities, which leads to non-asymptotical results. This goes one-step further existing results in the multiple testing literature concerning mixture models;
\item Numerical experiments support the validity of our approach both in the NSBM and outside the NSBM, which shows the robustness of our method.
\end{itemize}

The paper is organized as follows: Section~\ref{sec:setting} introduces the main mathematical tools that will be used throughout the paper, including the NSBM, multiple testing procedures and the graph inference criteria. The VEM approach to fit the model parameters and the clustering is then developed in Section~\ref{sec:VEMintro}. Our testing procedure is defined in Section~\ref{sec:method} and its theoretical properties are provided in Section~\ref{sec:theory}. 
Numerical experiments are given in Section~\ref{sec:simu} and a discussion is given in Section~\ref{sec:discussion}.
Detailed proofs are deferred to Section~\ref{sec:proofs}. Finally, Section~\ref{sec:supp} is a supplement containing auxiliary results (e.g., calculations in the Gaussian case, additional lemmas and proofs).

\section{Setting}
\label{sec:setting}

\subsection{Stochastic block model}\label{sec:modelSBM}

Let us first recall the definition of a standard (binary) stochastic block model (SBM). 
Let $n\geq 2$ be the number of nodes in the graph and $Q\in\{1,\dots,n\}$. Denote  $\cA=\{(i,j)\::\: 1\leq i <j\leq n\}$ the set of all possible (undirected) edges and $m=n(n-1)/2$ 
 its cardinal. The SBM corresponds to the observation of an adjacency matrix $A=(A_{i,j})_{1\leq i,j\leq n}\in\{0,1\}^{n^2}$ (there is an edge between node $i$ and node $j$ if and only if $A_{i,j}=1$), generated by the following random layers:
\begin{itemize}
\item The vector $Z=(Z_1,\dots,Z_n)$ of group memberships of the nodes is such that $Z_i$, $1\leq i\leq n$, are i.i.d. with values in $\{1,2,\dots,Q\}$ with probability 
$$\pi_q=\P(Z_1=q), \:\:q\in\{1,\dots,Q\},$$
for some parameter $\pi=(\pi_q)_{q\in\{1,\dots,Q\}}\in[0,1]^Q$ such that $\sum_{q=1}^Q \pi_q=1$. 
\item Conditionally on $Z$, the variables $A_{i,j}$, $(i,j)\in\cA$, are independent Bernoulli variables with parameter $w_{Z_i,Z_j}$,  that is, 
$$(A_{i,j})_{(i,j)\in \cA}\:|\:Z \sim \bigotimes_{(i,j)\in \cA} \mathcal{B}(w_{Z_i,Z_j}),$$
for some parameter $w=(w_{q,\ell})_{q,\l\in\{1,\dots,Q\}}\in[0,1]^{Q^2} $. 
Since we focus on the undirected model here, we generate only $A_{i,j}$, $(i,j)\in\cA$ and we set $A_{j,i}=A_{i,j}$ for all $(i,j)\in\cA$ and $A_{i,i}=0$ for $i\in\{1,\dots,n\}$. We also  impose that $w$ is symmetric, that is, $w_{q,\l}=w_{\l,q}$ for all $q,\l\in\{1,\dots,Q\}$.
\end{itemize}

\subsection{Noisy stochastic block model}\label{sec:model}

We now define the model that will be used throughout the manuscript. We refer to it as the {\it  noisy stochastic block model} (NSBM in short), as  we do not directly  observe the adjacency matrix $A$ but only  a noisy version of it. The observation $X \in \R^{\mtc{A}}$ is thus the result of an additional random layer:
\begin{itemize}
\item The variables $(Z,A)$ are latent and generated according to an SBM with parameter $n$, $Q$, $\pi$ and $w$, as defined in Section~\ref{sec:modelSBM};
\item Conditionally on $(Z,A)$,  the observed variables $X_{i,j}$, $(i,j)\in\cA$ are independent and each $X_{i,j}$ has the following  distribution
$$
X_{i,j} \sim (1-A_{i,j}) g_{0,\nu_0} + A_{i,j}  g_{\nu_{Z_i,Z_j}},
$$
for some unknown parameters $\nu_0\in \mathcal{T}_0$ and $\nu_{q,\l}\in \mathcal{T}$, $1\leq q,\l\leq Q$,  where $\{g_{0,t},t\in \mathcal{T}_0\}$ (resp. $\{g_{t},t \in  \mathcal{T}\}$) is a given parametric density family, where $\mathcal{T}_0$ (resp. $ \mathcal{T}$) is a non-empty open subset of $\R^{d_0}$ (resp.  $\R^{d_1}$). 
These densities are meant to be taken with respect to the Lebesgue measure on $\R$. 
\end{itemize}

The rationale behind this model is that, in place of missing edges ($A_{i,j}=0$), we observe  pure random noise modeled by the density $g_{0,\nu_0}$ (also called null density), and in place of present edges ($A_{i,j}=1$), we observe an effect, whose intensity depends on the group memberships of the nodes in the underlying SBM, which is modeled by the density $g_{\nu_{q,\l}}$ (also called alternative density for parameter $\nu_{q,\l}$).

The unknown global model  parameter is $\theta=(\pi,w,\nu_0,\nu)$, where $\pi$ and $w$ come from the SBM, $\nu_0$ denotes the null parameter and 
 $\nu=(\nu_{q,\ell})_{1\leq q , \ell\leq Q}\in  \mathcal{T}^{Q^2}$ denotes the parameter vector of the effects.
As our focus is on   undirected graphs,  $\nu$ is symmetric, that is,  $\nu_{\l,q}=\nu_{q,\l}$ for all $1\leq q , \ell\leq Q$.
From the symmetry property of $w,\nu$, we will sometimes consider, with some abuse of notation, that the parameter $(w,\nu)$ belongs to $\R^{Q(Q+1)/2}$, rather than $\R^{Q^2}$ when appropriate.
Overall, the   parameter $\theta$ is of dimension $(Q-1)+Q(Q+1)/2+d_0+d_1 Q(Q+1)/2$.
The distribution of $(X,A,Z)$ in the NSBM with parameters $n$, $Q$, $\theta$ is denoted by $P_{Q,\theta}$ (or $P_\theta$ for short).
We denote by $\Theta_Q$  (or $\Theta$ for short) the parameter space, which can be used to define additional restrictions on $(\pi,w,\nu_0,\nu)$. 
For instance, we will always assume in the sequel that $\pi_q\in (0,1)$, $w_{q,\l}\in (0,1)$, for $1\leq q\leq \l\leq Q$.
 The NSBM is defined by the distribution family $\{P_{n,Q,\theta}, \theta\in\Theta\}$. For $\theta\in \Theta$, we denote by $\P_\theta$ the distribution of the underlying probability space such that $(X,A,Z)\sim P_\theta$.

In this paper, the Gaussian case will be our leading example. 
 It is particularly suitable for modeling situations where the observations $X_{i,j}$ correspond to correlations, which are known to be approximately Gaussian in different asymptotic settings, see \cite{DP2007,Liu2013}. 
The {\it Gaussian NSBM} corresponds to the NSBM with the following choice of the parametric density families: 
\begin{align}\label{gaussmodel}
\{g_{0,t},t\in  \mathcal{T}_0\}=\{\mathcal N(0,\sigma_0^2),\sigma_0>0\}, \:\:\:\{g_{t},t \in  \mathcal{T}\}=\{\mathcal N(\mu,\sigma^2),\mu\in\R,\sigma>0\},
\end{align}
With the notation above, we have in this case $d_0=1$ and $d_1=2$ and $\theta=(\pi,w,\sigma_0,\mu,\sigma)$, where $\mu=(\mu_{q,\l})_{1\leq q \leq \l\leq Q} \in \R^{Q(Q+1)/2}$ and $\sigma=(\sigma_{q,\l})_{1\leq q \leq \l\leq Q} \in (0,\infty)^{Q(Q+1)/2}$. 
An illustration for the Gaussian NSBM is given in
Figure~\ref{fig_noisysbm_model}.

\begin{figure}[h!]
\begin{center}
\begin{tikzpicture}[scale=0.85]
 \tikzstyle{quadri}=[circle,draw,text=black,thick]
 \tikzstyle{estun}=[-,>=latex,very thick]
 \node[quadri, fill=gray!25] (1) at (0,3) {$1$};
 \node[quadri, fill=gray!25] (2) at (-4,0) {$2$};
  \node[quadri, fill=gray!25] (3) at (0,-3) {$3$};
 \node[quadri] (4) at (4,0) {$4$};
   \draw[estun] (1)--(2) node[sloped,above,pos=0.5]{$X_{1,2}\sim\mathcal{N}(\mu_{11},\sigma_{11}^2)$};
 \draw[estun,dashed,thick] (1)--(3) node[sloped,below,pos=0.59]{$X_{1,3}\sim\mathcal{N}(0,\sigma_{0}^2)$};
 \draw[estun] (1)--(4) node[sloped,above,pos=0.5]{$X_{1,4}\sim\mathcal{N}(\mu_{12},\sigma_{12}^2)$};
 \draw[estun] (2)--(3) node[sloped,above,pos=0.55]{$
 X_{2,3}\sim\mathcal{N}(\mu_{11},\sigma_{11}^2)$};
 \draw[estun,dashed,thick] (2)--(4) node[sloped,above,pos=0.75]{$
 X_{2,4}\sim\mathcal{N}(0,\sigma_{0}^2)$};
 \draw[estun,dashed,thick] (3)--(4) node[sloped,above,pos=0.5]{$X_{3,4}\sim\mathcal{N}(0,\sigma_{0}^2)$};
\end{tikzpicture}
\end{center}
\caption{
 Gaussian NSBM: illustration of the distribution of $X$ conditionally on $A,Z$. $n=4$ nodes (circles). 
$Z_1=Z_2=Z_3=1$ (gray color), $Z_4=2$, $A_{1,2}=A_{1,4}=A_{2,3}=1$ (solide edges) and $A_{1,3}=A_{2,4}=A_{3,4}=0$ (dashed edges).  
\label{fig_noisysbm_model}}
\end{figure}
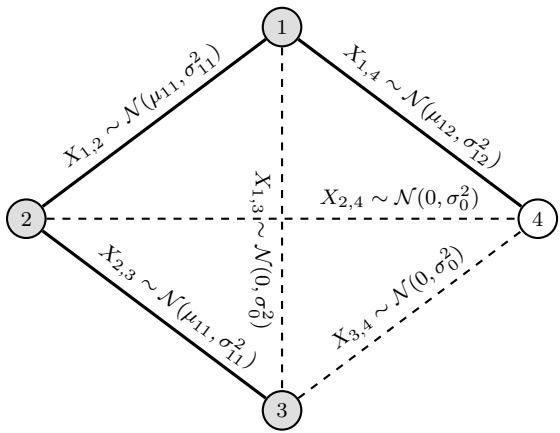

In this paper the NSBM is an undirected graph model. However, the extension to  the directed case is straightforward by relaxing the symmetry constraints on $A$, $w$ and $\nu$. 
In this case the dimension of the parameter $\theta$ is increased and given by $(Q-1)+Q^2+d_0+d_1 Q^2$.

The NSBM $\mathcal{M}_Q=\{P_{Q,\theta}, \theta\in\Theta_Q\}$ involves  the number $Q$ of groups, which is unknown in practice and has to be estimated from the data. 
It is common to  consider a  family of  models  $\{\mathcal{M}_Q,1\leq Q\leq Q_{\max}\}$ for some pre-specified
$Q_{\max}\leq n$, and to  choose the   best number of groups $Q$   by some model selection device, see Section~\ref{subsec:vem_icl} for more details.

Finally, as in all SBM-type models, identifiability in the NSBM is a delicate issue. Here, the NSBM shares similarities with the parametric random graph mixture model with weighted edges introduced in \cite{AMR2011}. Compared to their model, the NSBM replaces the mass point in $0$ by the distribution $g_{0,\nu_0}$. Following the proof of Theorem~12 therein, we can prove identifiability  for the Gaussian NSBM with parameter set \eqref{sec:thetagauss} (defined below) when $n\geq 3$ and $Q\geq 2$.

\subsection{Criteria}

For graph inference, the goal  is to recover the adjacency matrix  $A$ from the observation $X$.
In the multiple testing paradigm, the aim is to make a simultaneous test of
$$
H_{0,i,j} : ``A_{i,j}=0 " \mbox{ against } H_{1,i,j} : ``A_{i,j}=1 ",
$$
which corresponds to test $H_{0,i,j}$ : ``there is no edge between $i$ and $j$  in the latent graph"  against  $H_{1,i,j}$ : ``there is an edge between $i$ and $j$  in the latent graph".
A multiple testing procedure is any measurable function $\varphi(X)\in\{0,1\}^\cA$, with the convention that $\varphi_{i,j}(X)=1$ if and only if $H_{0,(i,j)} $ is rejected for any $(i,j)\in\mathcal{A}$. 

Let us denote the expected proportion of non-connected/connected vertices, in the NSBM with parameter $\theta=(\pi,w,\nu_0,\nu)$, by
\begin{align}\label{equpi0pi1}
\pizero=\sum_{q,\l} \pi_q \pi_\l (1-w_{q,\l}) ; \:\:\:\pione= \sum_{q,\l} \pi_q \pi_\l  w_{q,\l}
\end{align}
(dependence in $\theta$ is removed to lighten the notation).

The false discovery rate (FDR) of a given multiple testing procedure $\varphi(X)$ is 
the average proportion of errors among the discovered edges. It is
defined as
\begin{equation}\label{equ:FDR}
\FDR(\theta,\varphi) = \E_\theta\left[\frac{\sum_{(i,j)\in\mathcal{A}} (1-A_{i,j}) \varphi_{i,j}(X)}{\left(\sum_{(i,j)\in\mathcal{A}}  \varphi_{i,j}(X)\right)\vee 1}\right],
\end{equation}
where $\E_\theta$ refers to the expectation in the NSBM of Section~\ref{sec:model}.  
Sometimes, for simplicity, the following substitute is used
 \begin{equation}\label{equ:maFDR}
\maFDR(\theta,\varphi) = \frac{\E_\theta\left[\sum_{(i,j)\in\mathcal{A}} (1-A_{i,j}) \varphi_{i,j}(X)\right]}{\E_\theta\left[\sum_{(i,j)\in\mathcal{A}}  \varphi_{i,j}(X)\right]},
\end{equation}
where the expectation is taken inside the ratio (with the convention $0/0=0$, that is used throughout the paper). This is called the marginal false discovery rate (MFDR) and is used mostly to mimic the asymptotic behavior of the FDR.

The corresponding power of $\varphi(X)$ is classically defined as the true discovery rate (TDR) by
\begin{equation}\label{equ:TDR}
\TDR(\theta,\varphi) =\frac{ \E_\theta\left[\sum_{(i,j)\in\mathcal{A}} A_{i,j} \varphi_{i,j}(X)\right]}{\E_\theta\left[\sum_{(i,j)\in\mathcal{A}}  A_{i,j}\right]} =  (m\pione)^{-1}\E_\theta\left[\sum_{(i,j)\in\mathcal{A}} A_{i,j} \varphi_{i,j}(X)\right].
\end{equation}
Hence, $\TDR(\theta,\varphi)$
corresponds to the average proportion of discovered edges in the true underlying graph.

A good testing procedure detects a maximum number of significant edges, without making too many false detections. In this sense,  for a given level  $\alpha\in(0,1)$, we aim at finding a testing procedure  $\varphi = \varphi_\alpha$ such that for all $\theta$,
\begin{equation}\label{equ:aim}
 \FDR(\theta, \varphi) \leq \alpha, \:\:\:\mbox{ with } \TDR(\theta, \varphi) \mbox{  ``as large as possible"},
\end{equation}
that is, both  the FDR is controlled at level $\alpha$ and  many true edges are discovered.

\subsection{BH procedure}\label{subsec:BH}

A classical procedure to control the FDR is the so-called BH procedure \citep{BH1995}. It consists in first  computing the $p$-values $p_{i,j}(X)$ of the individual tests for $H_{0,i,j}$ against  $H_{1,i,j}$ for all $(i,j)\in\cA$. In the NSBM, this amounts to computing
\begin{equation}\label{equ:pvalues}
p_{i,j}(X)=\bar{F}_0(|X_{i,j}|),\:\:\:(i,j)\in\mathcal{A},
\end{equation}
where 
$\bar{F}_0(t)=\int_\R \mathds{1}\{|x|\geq t\}  g_{0,\nu_0}(x)dx$ is the probability that the test statistic is larger than $t$ under the null.  
Next,  the $p$-values are ordered in   increasing order such that
$
0=p_{(0)}\leq p_{(1)} \leq \dots \leq p_{(m)}.
$
Then, all  null hypotheses $H_{0,i,j}$ with $p_{i,j}(X)\leq \alpha \hat{k}/m$, where $\hat{k}=\max\{k\in\{1,\dots,m\}\::\:p_{(k)}\leq \alpha k/m\}$ are rejected.
That is, the BH procedure is given by $\varphi^{BH}_{i,j}(X)=\ind{p_{i,j}(X)\leq \alpha \hat{k}/m}$ for $(i,j)\in\mathcal{A}$.
 
In the NSBM defined in Section~\ref{sec:model}, since the $X_{i,j}$'s are mutually independent conditionally on $Z,A$, the classical result in \citep{BH1995} entails that the BH procedure controls the FDR conditionally on $Z,A$, and thus also unconditionally, that is, 
\begin{align}\label{BHcontrol}
\mbox{for all $\theta\in\Theta$, $\FDR(\theta, \varphi^{BH})\leq \pizero \alpha\leq \alpha$.}
\end{align}

However, as we will see, the power of $\varphi^{BH}$ can be suboptimal in our model, and learning the latent clustering of the graph can help to improve the decision.
On an intuitive point of view, the reason is that $p_{i,j}(X)$ is solely based on the individual value of $X_{i,j}$, while one can in principle take advantage of the values of the $X_{i',j'}$ for $(i',j')$ sharing the same node membership as $(i,j)$.
For instance, in Figure~\ref{fig_noisysbm_model}, how to recover $A$ from the values of the edges $X$? The parameter $\theta$ and the clustering $Z$ can help in the decision: if $|X_{1,2}|$ is not too large, it will be difficult to detect the edge $(1,2)$ solely on the value of $X_{1,2}$. However, 
if we know that the nodes $\{1,2,3\} $ belong to group~$1$ and that $w_{1,1}$  is small, this information provides additional evidence that will help to detect the edge $(1,2)$ from $X_{1,2}$.

However, this requires the additional effort to estimate the parameters $\theta$ of the NSBM and of the latent clustering $Z$. 

\section{Estimation and clustering by VEM}
\label{sec:VEMintro}

The NSBM is a latent variable model, so that an  EM-type algorithm may be used to approximate the maximum likelihood estimator of the model parameter $\theta=(\pi,w,\nu_0,\nu)\in\Theta$  using the observation $X$. We develop in this section such an approach.
Proofs of the results are given in Section~\ref{sec:proofvem}.

\subsection{ML estimation} The ML estimator is defined as the maximizer of the observed likelihood function $\theta\in\Theta\mapsto\mathcal{L}(X;\theta)$, which is the marginal of the complete likelihood function $\theta\in\Theta\mapsto \mathcal{L}(X,A,Z;\theta)$ given as follows: for all $\theta\in \Theta$,
\begin{align}
\mathcal{L}&(X,A,Z;\theta) 
= \mathcal{L}(X\:|\:A,Z; \nu_0,\nu) \mathcal{L}(A \:|\: Z;w) \mathcal{L}( Z;\pi)\nonumber\\
 &= \prod_{(i,j)\in \cA} (g_{0,\nu_0}(X_{i,j}))^{1-A_{i,j}} (g_{\nu_{Z_i,Z_j}}(X_{i,j}))^{A_{i,j}}
\times \prod_{(i,j)\in \cA}w_{Z_i,Z_j}^{A_{i,j}} (1-w_{Z_i,Z_j})^{1-A_{i,j}}
\times \prod_{i=1}^n \pi_{Z_i}\nonumber\\
 &= \prod_{\substack{(i,j)\in \cA:\\ A_{i,j}=0}} g_{0,\nu_0}(X_{i,j}) \times\prod_{q=1}^Q\prod_{\l=1}^Q\prod_{\substack{(i,j): A_{i,j}=1\\ Z_{i,q}Z_{j,\l}=1}} g_{\nu_{q,\l}}(X_{i,j})
\nonumber\\
&\quad \hspace{3cm}\times \prod_{1\le q\le \l\le Q} w_{q,\l}^{M_{q,\l}} (1-w_{q,\l})^{\bar M_{q,\l}}
\times \prod_{q=1}^Q \pi_{q}^{\sum_{i=1}^n Z_{i,q}},\label{equvraiscompl}
\end{align}
where we let $Z_{i,q}=\mathds{1}\{Z_i=q\}$ and
\begin{align*}
M_{q,\l}&= \#\{ (i,j)\in \cA: A_{i,j}=1, Z_{i,q}Z_{j,\l} + Z_{i,\l}Z_{j,q}>0\};\\
\bar M_{q,\l}&= \#\{ (i,j)\in \cA: A_{i,j}=0, Z_{i,q}Z_{j,\l} + Z_{i,\l}Z_{j,q}>0\}.
\end{align*}
To derive the likelihood function $\theta\in\Theta\mapsto\mathcal{L}(X;\theta)$ from the complete likelihood function, we should integrate over all possible configurations of the latent variables $(A,Z)\in \{0,1\}^\mathcal{A}\times \{1,\dots,Q\}^n$, which is  prohibitive  for any reasonable values of $n$ and $Q$. 
Hence,  the ML estimator cannot be approached by maximizing directly the likelihood function, and we propose to approach it by using an EM-type algorithm.

\subsection{E-step using variational approximation}

Let $\theta\in\Theta$ denote the current value of the model parameter obtained at the previous (M-)step.
The  E-step of the EM-algorithm consists in computing 
$\mathcal{L}(A,Z\:|\:X; \theta)$,
the conditional likelihood
  of the latent variables $A,Z$ given the observation $X$, when $(X,A,Z)\sim P_\theta$.
Here we encounter the difficulty  that this conditional likelihood is intractable due to the involved dependence structure of the $Z_i$'s given the observations $X$. For this reason we use an approximation 
by some factorized likelihood, that is, a mean-field approximation. We denote by $\tilde P_{\theta,\tau}$ the distribution on $\{0,1\}^\mathcal{A}\times \{1,\dots,Q\}^n$ (potentially depending on $X$) such that the corresponding likelihood is of the form
\begin{align}\label{approx:VEM}
\mathcal{L}(A,Z ; \tilde P_{\theta,\tau})
&=\mathcal{L}(A\:|\: Z, X;\theta) \prod_{i=1}^n \tau_{i,Z_i},   
\end{align}
for a parameter $\tau=(\tau_{i,q})_{i,q}$ belonging to the set
$$
\mathcal T=\left\{\tau=(\tau_{i,q})_{1\leq i\leq n,1\leq q\leq Q}\in [0,1]^{nQ}:
\sum_{q=1}^Q\tau_{i,q}=1,\mbox{ for all } i\in\{1,\dots,n\}
\right\}.
$$
Then, the variational E-step consists in  searching the variational parameters $\hat\tau$ that give the best approximation of the conditional distribution 
$
P_{A,Z|X;\theta}
$
of $A,Z$ given $X$ under $P_\theta$ by a factorized distribution $\tilde P_{\theta,\tau}$ in terms of the Kullback-Leibler divergence. 
More precisely, for all $\theta\in\Theta$, 
\begin{align}
\hat \tau &=\hat \tau(\theta)=\arg\min_{\tau\in\mathcal T} K\!L\left(\tilde P_{\theta,\tau} \:\|\: P_{A,Z|X;\theta}\right)\label{eq:vestep:optim}.
 \end{align}

The following proposition states that the optimisation problem in \eqref{eq:vestep:optim} is equivalent to solving a fixed point equation, which in practice is  solved numerically by an iterative algorithm. 

 \begin{proposition} \label{prop_fixed_point}
For all $\theta=(\pi,w,\nu_0,\nu)\in\Theta$, {any} solution $\hat\tau=(\hat\tau_{i,q})_{i,q} \in \mathcal T\cap(0,1)^Q$ of  \eqref{eq:vestep:optim} verifies the following  fixed point equation
{
\begin{align*}
\hat\tau_{i,q} &= C_i \pi_q\exp\left( \sum_{\substack{j=1\\j\neq i}}^n\sum_{\l=1}^Q\hat\tau_{j,\l}\: d_{q,\l}^{i,j}\right), \:\: i\in\{1,\dots,n\},\:\: q\in\{1,\dots,Q\},
\end{align*}
where $C_i>0$, $i\in\{1,\dots,n\}$, are normalization constants such that $\sum_{q=1}^Q \hat\tau_{i,q}=1$ and} where
\begin{align}
  \rho_{q,\l}^{i,j}&=  \rho_{q,\l}^{i,j}(\theta)=
  \frac{w_{q,\l}g_{\nu_{q,\l}}(X_{i,j}) }{w_{q,\l} g_{\nu_{q,\l}}(X_{i,j}) + (1-w_{q,\l})g_{0,\nu_{0}}(X_{i,j})};
\label{eq:def:rho}
\\
d_{q,\l}^{i,j}&=d_{q,\l}^{i,j}(\theta)=\rho_{q,\l}^{i,j}\left[\log g_{\nu_{q,\l}}(X_{i,j}) +\log w_{q,\l}-1\right] + (1-\rho_{q,\l}^{i,j})\left[\log g_{\nu_{0}}(X_{i,j}) +\log (1-w_{q,\l})\right].\label{eq:def:d}
\end{align}
\end{proposition}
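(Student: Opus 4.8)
The plan is to turn the constrained minimization \eqref{eq:vestep:optim} into the first-order (Lagrange) optimality conditions of an explicit smooth function of $\tau$ on the product of simplices $\mathcal T$, and then solve those conditions. First I would make the objective explicit in $\tau$. Since the conditional law of $A$ given $(Z,X)$ under $\tilde P_{\theta,\tau}$ in \eqref{approx:VEM} is exactly the true posterior conditional $\mathcal L(A\mid Z,X;\theta)$, the chain rule for the Kullback--Leibler divergence collapses the $A$-coordinates, so $KL(\tilde P_{\theta,\tau}\,\|\,P_{A,Z\mid X;\theta})=KL\big(\bigotimes_{i}\tau_{i,\cdot}\,\big\|\,P_{Z\mid X;\theta}\big)$ depends only on the $Z$-marginals; equivalently, one expands the lower bound $\mathcal J(\tau)=\E_{\tilde P_{\theta,\tau}}\big[\log\mathcal L(X,A,Z;\theta)-\log\mathcal L(A,Z;\tilde P_{\theta,\tau})\big]$, uses that $A_{i,j}\mid (Z,X)$ is Bernoulli with parameter $\rho_{Z_i,Z_j}^{i,j}$ from \eqref{eq:def:rho}, takes the $A$-expectation, and observes that the $A$-entropy of $\tilde P_{\theta,\tau}$ offsets the $A$-dependence of the complete likelihood. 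Either route gives, up to an additive constant independent of $\tau$,
\begin{align*}
\mathcal J(\tau)=\sum_{(i,j)\in\cA}\sum_{q,\l}\tau_{i,q}\tau_{j,\l}\,\delta_{q,\l}^{i,j}+\sum_{i=1}^n\sum_{q=1}^Q\tau_{i,q}\log\pi_q-\sum_{i=1}^n\sum_{q=1}^Q\tau_{i,q}\log\tau_{i,q},
\end{align*}
where $\delta_{q,\l}^{i,j}=\log\big(w_{q,\l}g_{\nu_{q,\l}}(X_{i,j})+(1-w_{q,\l})g_{0,\nu_0}(X_{i,j})\big)$ is the log marginal density of $X_{i,j}$ given $Z_i=q,Z_j=\l$; the Bayes identity $\log(wg_1+(1-w)g_0)=\rho\log(wg_1)+(1-\rho)\log((1-w)g_0)-\rho\log\rho-(1-\rho)\log(1-\rho)$ with $\rho=wg_1/(wg_1+(1-w)g_0)$ then re-expresses $\delta_{q,\l}^{i,j}$ through $\rho_{q,\l}^{i,j}$, matching the quantity $d_{q,\l}^{i,j}$ of \eqref{eq:def:d}. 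Minimizing the KL divergence over $\mathcal T$ is the same as maximizing $\mathcal J$ over $\mathcal T$, so $\hat\tau$ is such a maximizer.

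Next I would write down first-order optimality. For each node $i$ attach a multiplier $\lambda_i$ to the equality constraint $\sum_q\tau_{i,q}=1$; since by hypothesis $\hat\tau_{i,\cdot}\in(0,1)^Q$ for each $i$, the nonnegativity constraints are inactive, so a maximizer of $\mathcal J$ on $\mathcal T$ lying in this interior is a stationary point of the Lagrangian (no concavity of $\mathcal J$ is needed). Differentiating $\mathcal J$ in $\tau_{i,q}$ --- here, symmetrizing the sum over $\cA$ into ordered pairs produces a factor $\tfrac12$ that is exactly cancelled by the two symmetric occurrences of $\tau_{i,q}$ in the bilinear term, and one uses $d_{q,\l}^{i,j}=d_{\l,q}^{j,i}$, which follows from $w_{q,\l}=w_{\l,q}$, $\nu_{q,\l}=\nu_{\l,q}$ and $X_{i,j}=X_{j,i}$ --- yields
\[
\sum_{\substack{j=1\\ j\neq i}}^n\sum_{\l=1}^Q\hat\tau_{j,\l}\,d_{q,\l}^{i,j}+\log\pi_q-\log\hat\tau_{i,q}-1=\lambda_i .
\]
Solving for $\hat\tau_{i,q}$ and exponentiating gives $\hat\tau_{i,q}=C_i\,\pi_q\exp\!\big(\sum_{j\neq i}\sum_\l\hat\tau_{j,\l}d_{q,\l}^{i,j}\big)$ with $C_i=e^{-1-\lambda_i}>0$, and $C_i$ is then determined by the constraint $\sum_q\hat\tau_{i,q}=1$; this is the announced fixed-point equation.

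I expect the one delicate part to be the bookkeeping of the first step: integrating $A$ out correctly (equivalently, tracking the cancellation of the complete-likelihood $A$-terms against the variational $A$-entropy), and then carrying out the algebra that rewrites the marginal mixture log-density through $\rho_{q,\l}^{i,j}$ so as to recognise $d_{q,\l}^{i,j}$. The optimisation step is routine first-order analysis of a smooth function on a product of probability simplices, the interiority assumption ruling out boundary phenomena.
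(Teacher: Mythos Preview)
Your strategy is exactly the paper's: make the variational objective $J(\theta;\tau,\theta)$ explicit in $\tau$ (this is Lemma~\ref{lem_vem_J}) and then write the interior first-order conditions on the product of simplices.

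The gap is your claimed identification of $\delta_{q,\l}^{i,j}$ with $d_{q,\l}^{i,j}$. The Bayes identity you quote is correct and yields
\[
\delta_{q,\l}^{i,j}=\rho_{q,\l}^{i,j}\big[\log g_{\nu_{q,\l}}(X_{i,j})+\log w_{q,\l}\big]+(1-\rho_{q,\l}^{i,j})\big[\log g_{0,\nu_0}(X_{i,j})+\log(1-w_{q,\l})\big]+H(\rho_{q,\l}^{i,j}),
\]
with $H(p)=-p\log p-(1-p)\log(1-p)$. Comparing with \eqref{eq:def:d} gives $\delta_{q,\l}^{i,j}-d_{q,\l}^{i,j}=\rho_{q,\l}^{i,j}+H(\rho_{q,\l}^{i,j})$, which depends on $(q,\l)$ through $\rho_{q,\l}^{i,j}$ and therefore does \emph{not} disappear from $\partial_{\tau_{i,q}}\mathcal J$ as a $q$-independent constant absorbable into $C_i$. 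Your argument, as written, proves the fixed-point equation with $\delta_{q,\l}^{i,j}$ in the exponent, not with $d_{q,\l}^{i,j}$.

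It is worth noting that your computation of $\mathcal J$ is in fact the correct one. In the paper's derivation of Lemma~\ref{lem_vem_J}, the term $-\tilde\E_{\theta,\tau}[\log\mathcal L(A\mid Z,X;\theta)\mid X]=\sum_{(i,j)}\sum_{q,\l}\tau_{i,q}\tau_{j,\l}\,H(\rho_{q,\l}^{i,j})$ is recorded as $-\sum_{(i,j)}\sum_{q,\l}\tau_{i,q}\tau_{j,\l}\,\rho_{q,\l}^{i,j}$; this slip is exactly what produces the stray ``$-1$'' in $d_{q,\l}^{i,j}$. The fixed-point relation that genuinely characterizes the interior maximizers of $J(\theta;\cdot,\theta)$ is the one with $\delta_{q,\l}^{i,j}$ (equivalently, with $d_{q,\l}^{i,j}$ modified by replacing ``$-1$'' by ``$-\log\rho_{q,\l}^{i,j}$'' and adding ``$-\log(1-\rho_{q,\l}^{i,j})$'' in the second bracket).
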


\subsection{M-step} \label{sec:Mstep}

Let $\tau\in \mathcal{T}$ be the current value of the variational parameters  and $\theta'\in\Theta$ the current value of the model parameter obtained at the previous M-step. Now, the M-step consists in updating the value of the model parameter  $\theta$ by maximizing $\theta \in\Theta\mapsto \tilde\E_{\theta',\tau}[\log \mathcal{L}(X,A,Z;\theta)\:|\:X]$, where $\tilde\E_{\theta',\tau}$ denotes a distribution on the underlying probabilistic space that generates $\tilde P_{\theta',\tau}$ as distribution of $(A,Z)$ conditionally on $X$.  

\begin{proposition}[M-step]\label{prop_mstep_pi_w}
The optimisation problem 
\begin{align} \label{eq_general_mstep}
\arg\max_{\theta\in\Theta}\tilde\E_{\theta',\tau}[\log \mathcal{L}(X,A,Z;\theta)\:|\:X],
\end{align} 
splits into three independent problems. The solutions for $\pi$ and $w$ are given by
\begin{align} 
\hat\pi_q& = \frac1n\sum_{i=1}^n\tau_{i,q},\quad q\in\{1,\dots,Q\}\label{eq:mstep:pi}\\
\hat w_{q,\l}&= \frac{\sum_{(i,j) \in\cA} \kappa_{q,\l}^{i,j} }{\sum_{(i,j) \in\cA}(\tau_{i,q}\tau_{j,\l} +\tau_{i,\l}\tau_{j,q} )},\quad q\neq \l,\label{eq:mstep:wql}\\
\hat w_{q,q}&= \frac{\sum_{(i,j) \in\cA}\kappa_{q,q}^{i,j}}{\sum_{(i,j) \in\cA}\tau_{i,q}\tau_{j,q}},\quad   q\in\{1,\dots,Q\},\label{eq:mstep:wqq}
\end{align}  
where
  \begin{align} \label{eq:mstep:kappa}
\kappa_{q,\l}^{i,j}=\left\{\begin{array}{ll}
(\tau_{i,q}\tau_{j,\l} +\tau_{i,\l}\tau_{j,q} )\rho_{q,\l}^{i,j}&\quad\text{if }q\neq \l;\\
\tau_{i,q}\tau_{j,q}{
\rho_{q,q}^{i,j}
}
&\quad\text{if }q=\l,
\end{array}\right.
\end{align} 
where ${\rho_{q,\l}^{i,j}=\rho_{q,\l}^{i,j}(\theta')}$ is defined by \eqref{eq:def:rho}.
In addition, the solution of \eqref{eq_general_mstep} in $(\nu_0, \nu)$ is given by
\begin{align}  
&\arg\max_{\nu_0 \in\mathcal{T}_0}
\sum_{(i,j)\in \cA} \log g_{0,\nu_0}(X_{i,j}) \sum_{q\le \l}  \bar\kappa_{q,\l}^{i,j},\:\:\:
\arg\max_{\nu_{q,\l} \in\mathcal{T}}
 \sum_{(i,j)\in \cA}  \kappa_{q,\l}^{i,j} \log(g_{\nu_{q,\l}}(X_{i,j})),\quad  1\le q\le \l\le Q, 
\label{eq:mstep:general_nuql}
\end{align}  
where
 \begin{align*}  
 \bar\kappa_{q,\l}^{i,j}=\left\{\begin{array}{ll}
(\tau_{i,q}\tau_{j,\l} +\tau_{i,\l}\tau_{j,q} )(1-\rho_{q,\l}^{i,j})&\quad\text{if }q\neq \l;\\
\tau_{i,q}\tau_{j,q}(1-{\rho_{q,q}^{i,j}})&\quad\text{if }q=\l.
\end{array}\right.
 \end{align*}  
\end{proposition}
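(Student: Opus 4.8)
The plan is to push the factorization \eqref{equvraiscompl} of the complete likelihood through the conditional expectation. Taking logarithms in \eqref{equvraiscompl}, the map $\theta\mapsto\log\mathcal{L}(X,A,Z;\theta)$ is a sum of three pieces: one depending only on $\pi$, namely $\sum_{q}\big(\sum_{i}Z_{i,q}\big)\log\pi_q$; one depending only on $w$, namely $\sum_{1\le q\le\l\le Q}\big(M_{q,\l}\log w_{q,\l}+\bar M_{q,\l}\log(1-w_{q,\l})\big)$; and one depending only on $(\nu_0,\nu)$, namely $\log\mathcal{L}(X\mid A,Z;\nu_0,\nu)$. Since $\tilde\E_{\theta',\tau}[\,\cdot\mid X]$ is linear and the constraints defining $\Theta$ do not couple the blocks $\pi$, $w$ and $(\nu_0,\nu)$, the objective in \eqref{eq_general_mstep} splits as announced into three independent maximizations. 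What remains is to evaluate the conditional expectations of the statistics above under $\tilde P_{\theta',\tau}$ and to solve three elementary problems.

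The core computation concerns these conditional expectations. From \eqref{approx:VEM}, the $Z$-marginal of $\tilde P_{\theta',\tau}$ given $X$ is $\prod_{k}\tau_{k,Z_k}$, so the $Z_i$'s are independent with $\tilde\E_{\theta',\tau}[Z_{i,q}\mid X]=\tau_{i,q}$; this immediately settles the $\pi$-piece. For the other two, the key quantity is $\tilde\E_{\theta',\tau}[A_{i,j}Z_{i,q}Z_{j,\l}\mid X]$ for $(i,j)\in\cA$, which I would compute by conditioning further on $Z$: under $\tilde P_{\theta',\tau}$, $A_{i,j}$ given $(Z,X)$ is Bernoulli with mean obtained from Bayes' formula in the NSBM emission layer, which on the event $\{Z_i=q,Z_j=\l\}$ equals $\rho_{q,\l}^{i,j}(\theta')$ of \eqref{eq:def:rho}; combined with the independence of $Z_i$ and $Z_j$ this yields $\tilde\E_{\theta',\tau}[A_{i,j}Z_{i,q}Z_{j,\l}\mid X]=\tau_{i,q}\tau_{j,\l}\,\rho_{q,\l}^{i,j}(\theta')$, and the analogue with $(1-A_{i,j})$ replaces $\rho_{q,\l}^{i,j}$ by $1-\rho_{q,\l}^{i,j}$. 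Summing over $\cA$ and using the symmetry $\rho_{q,\l}^{i,j}=\rho_{\l,q}^{i,j}$ (inherited from the symmetry of $w$ and $\nu$), one obtains $\tilde\E_{\theta',\tau}[M_{q,\l}\mid X]=\sum_{(i,j)\in\cA}\kappa_{q,\l}^{i,j}$ and $\tilde\E_{\theta',\tau}[\bar M_{q,\l}\mid X]=\sum_{(i,j)\in\cA}\bar\kappa_{q,\l}^{i,j}$ with $\kappa,\bar\kappa$ as in \eqref{eq:mstep:kappa}; the same manipulation, splitting the double sum over $q<\l$ and $q=\l$, shows that in $\tilde\E_{\theta',\tau}[\log\mathcal{L}(X\mid A,Z;\nu_0,\nu)\mid X]$ the coefficient of $\log g_{0,\nu_0}(X_{i,j})$ is $\sum_{q\le\l}\bar\kappa_{q,\l}^{i,j}$ and that of $\log g_{\nu_{q,\l}}(X_{i,j})$ is $\kappa_{q,\l}^{i,j}$.

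It then suffices to solve the three maximizations. For $\pi$, maximizing $\sum_{q}\big(\sum_i\tau_{i,q}\big)\log\pi_q$ over the simplex (note $\sum_{q}\sum_i\tau_{i,q}=n$) gives, by Gibbs' inequality or a Lagrange multiplier, $\hat\pi_q=\frac1n\sum_i\tau_{i,q}$, that is \eqref{eq:mstep:pi}. For $w$, for each pair $q\le\l$ one maximizes $w_{q,\l}\mapsto a_{q,\l}\log w_{q,\l}+b_{q,\l}\log(1-w_{q,\l})$ with $a_{q,\l}=\sum_{(i,j)}\kappa_{q,\l}^{i,j}$ and $b_{q,\l}=\sum_{(i,j)}\bar\kappa_{q,\l}^{i,j}$; strict concavity gives the unique maximizer $a_{q,\l}/(a_{q,\l}+b_{q,\l})$, and since $\kappa_{q,\l}^{i,j}+\bar\kappa_{q,\l}^{i,j}$ equals $\tau_{i,q}\tau_{j,\l}+\tau_{i,\l}\tau_{j,q}$ for $q\neq\l$ and $\tau_{i,q}\tau_{j,q}$ for $q=\l$, this is exactly \eqref{eq:mstep:wql} and \eqref{eq:mstep:wqq}. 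For $(\nu_0,\nu)$, the expected log-likelihood equals $\sum_{(i,j)\in\cA}\log g_{0,\nu_0}(X_{i,j})\sum_{q\le\l}\bar\kappa_{q,\l}^{i,j}+\sum_{q\le\l}\sum_{(i,j)\in\cA}\kappa_{q,\l}^{i,j}\log g_{\nu_{q,\l}}(X_{i,j})$, which decouples into the $\nu_0$-problem and the independent $\nu_{q,\l}$-problems of \eqref{eq:mstep:general_nuql}. I expect the only genuinely delicate step to be the bookkeeping in the second paragraph: keeping every $\rho$ frozen at $\theta'$ while $\theta$ is free, and correctly handling the passage from sums over all ordered pairs $(q,\l)$ to sums over $q\le\l$ (the factor of two on off-diagonal terms and the single count on the diagonal), so that the symmetrized weights $\kappa_{q,\l}^{i,j}$ and $\bar\kappa_{q,\l}^{i,j}$ appear with exactly the multiplicities in \eqref{eq:mstep:kappa}.
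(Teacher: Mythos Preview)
Your proposal is correct and follows essentially the same approach as the paper: you use the factorization of the complete log-likelihood into the $\pi$-, $w$-, and $(\nu_0,\nu)$-blocks, compute the relevant conditional expectations under $\tilde P_{\theta',\tau}$ by first conditioning on $Z$ (yielding the key identity $\tilde\E_{\theta',\tau}[A_{i,j}Z_{i,q}Z_{j,\l}\mid X]=\tau_{i,q}\tau_{j,\l}\rho_{q,\l}^{i,j}(\theta')$), and then solve each resulting elementary maximization. The paper proceeds in exactly the same way, and your remark about the off-diagonal/diagonal bookkeeping is precisely the only point where care is needed.
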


Concerning the maximization in $(\nu_0, \nu)$, we see that the terms to maximize in \eqref{eq:mstep:general_nuql} have the form of  weighted likelihood functions. This implies that the solutions have the form of the traditional ML estimates where sample means are replaced with weighted means. 
For instance, in the Gaussian model \eqref{gaussmodel},  
the solution of \eqref{eq_general_mstep} in $\nu_0=\sigma^2_0$ and $\nu_{q,\l}=(\mu_{q,\l},\sigma^2_{q,\l})$
 is given by
  \begin{align*} 
\hat \mu_{q,\l}&=\frac{\sum_{(i,j)\in \cA} \kappa_{q,\l}^{i,j}X_{i,j} }{\sum_{(i,j)\in \cA} \kappa_{q,\l}^{i,j}},\quad
\hat\sigma_{q,\l}^2= \frac{\sum_{(i,j)\in \cA} \kappa_{q,\l}^{i,j}(X_{i,j} -\hat \mu_{q,\l})^2}{\sum_{(i,j)\in \cA} \kappa_{q,\l}^{i,j}},\quad\forall q\le \l,\\
\hat\sigma_{0}^2&= \frac{\sum_{q\le \l}\sum_{(i,j)\in \cA} \bar\kappa_{q,\l}^{i,j}X_{i,j}^2}{\sum_{q\le \l}\sum_{(i,j)\in \cA} \bar\kappa_{q,\l}^{i,j}},
 \end{align*}  
where $\kappa_{q,\l}^{i,j}$ and $\bar\kappa_{q,\l}^{i,j}$ are given in Proposition~\ref{prop_mstep_pi_w}.\\

Overall, we summarize the VEM algorithm as follows.

 \begin{algorithm}[H]
  \label{VEMalgo}
   \KwData{Observation $X$, number $Q$ of latent groups.}
  \KwResult{{Estimator $\wh{\theta}$},  {clustering $\wh{Z}$}, variational parameters $\tau$.}
   Initialization of $\theta$  {and $\tau$}\;
   \While{not converged}{
  VE-step: update $\tau=\tau(\theta)$ by solving the fix point equation  in Proposition~\ref{prop_fixed_point}\;
M-step: update $\theta$ according to Proposition \ref{prop_mstep_pi_w} \;
 }
{Let $\wh{\theta}=\theta$;}\\ 
{Let $\wh{Z}_i=\arg\max_{q\in\{1,\dots,Q\}}\{ \tau_{i,q} (\theta)\}$, $i\in\{1,\dots,n\}$.}
    \caption{VEM algorithm for the noisy stochastic bloc model}
\end{algorithm}


\section{New procedure for graph inference}
\label{sec:method}

\subsection{Notions of $\ell$-values}

In the NSBM, when inferring the latent parameter $A$, it is well known that the optimal classification rule is the Bayes rule, based on the posterior distribution of $A$.
It is therefore natural to consider  the following quantities as test statistics:
\begin{align}
\l_{i,j}(X,z,\theta)&=\P_\theta(A_{i,j}=0\:|\: X,Z=z), \:\:\:\: (i,j)\in \cA,\: z\in\{1,\dots,Q\}^n,\:\theta\in\Theta .\label{equ-lvaluescond}
\end{align}

We refer to the quantities $\l_{i,j}(X,z;\theta)$ as the {\it $\ell$-values} (it is also called the local FDR, see \cite{Efron2004}). 
Applying Bayes formula, the $\ell$-values can be obtained as follows:
\begin{align}
\l_{i,j}(X,z,\theta)& = \bell(X_{i,j},z_i,z_j,\theta)\:\:\:\: (i,j)\in \cA,\: z\in\{1,\dots,Q\}^n,\:\theta\in\Theta\label{formulalvalueZknown},
\end{align}
for a functional $\bell(\cdot)$ defined by the likelihood ratio
\begin{equation}
\bell(x,q,\l,\theta)= 
  \frac{ (1-w_{q,\l}) g_{0,\nu_0}(x) }{(1-w_{q,\l})g_{0,\nu_0}(x) +w_{q,\l} g_{\nu_{q,\l}}(x) }, \:\:\:\:x\in\R,\: q,\l \in\{1,\dots,Q\} ,\:\theta=(\pi,w,\nu_0,\nu)\in\Theta.
 \label{formulalvalfunction}
\end{equation}
In particular, the latter shows the following  useful property: 
\begin{align}
\left\{\begin{array}{l}\mbox{For all $\theta_0,\theta\in\Theta$, $z\in \{1,\dots,Q\}^n$, conditionally on $Z=z$}\\\mbox{the variables $\l_{i,j}(X,z,\theta)$, $(i,j)\in \cA$,  are independent under $\P_{\theta_0}$.}\end{array}\right.\label{indep}\tag{Indep}
\end{align}

Our multiple testing procedure will thus reject $H_{0,i,j}$ provided that $\ell_{i,j}(X,Z;\theta) \leq t$, for some threshold $t$ to be appropriately chosen. As illustrated on Figure~\ref{fig:optimalGaussian} in the Gaussian case, the induced rejection region for $X_{i,j}$ is not (necessarily) of the classical form $|X_{i,j}|\geq c$, $c>0$, but is driven by the value of $\theta=(\pi,w,\nu_0,\nu)$.

\begin{remark}
Note that $\bell(X_{i,j},q,\l,\theta)$ was already a crucial quantity in the VEM algorithm: it corresponded to the quantities  $1-\rho_{q,\l}^{i,j}(\theta)$ defined by \eqref{eq:def:rho} with  notation of  Section~\ref{sec:Mstep}.
\end{remark}

\subsection{Notion of $q$-values}\label{sec:qvalue}

How to choose $t$ in the decision $\ell_{i,j}(X,Z,\theta) \leq t$? According to our motivation, it should be fixed so that the FDR is smaller than or equal to $\alpha$. However, the FDR is difficult to compute because of the denominator inside the expectation, see \eqref{equ:FDR}. It is therefore useful to consider as a substitute the marginal FDR  \eqref{equ:maFDR}, which should be close to the FDR (at least when the numerator and denominator concentrate around their expectation).

For $\theta,\theta'\in\Theta$ and $t\in[0,1]$, the marginal FDR (under $P_{\theta'}$) of the procedure rejecting the null $H_{0,i,j}$ whenever $\ell_{i,j}(X,Z;\theta) \leq t$ is given by
 the following quantity:  
\begin{align}
\Q_{\theta'}(\theta,t) &=\frac{ \E_{\theta'}\left[\sum_{(i,j)\in\mathcal{A}} (1-A_{i,j}) \ind{\ell_{i,j}(X,Z,\theta) \leq t}\right]}{ \E_{\theta'}\left[\sum_{(i,j)\in\mathcal{A}}  \ind{\ell_{i,j}(X,Z,\theta) \leq t} \right]}\label{equ:Qcond}\\
&=\frac{  \sum_{q,\l} \pi'_q\pi'_\l (1-w'_{q,\l})  \q_0(t,q,\l; \theta',\theta) }{ \sum_{q,\l} \pi'_q\pi'_\l [(1-w'_{q,\l})  \q_0(t,q,\l; \theta',\theta)+w'_{q,\l}  \q_1(t,q,\l; \theta',\theta)] },\label{equ:Qcond2}
 \end{align}
where we let for $\theta=(\pi,w,\nu_0,\nu),\theta' =(\pi',w',\nu'_0,\nu')$, $\delta\in\{0,1\}$, $q,\l\in\{1,\dots,Q\}$,
\begin{align}
\q_\delta(t,q,\l; \theta',\theta)&=\P_{\theta'}(\l_{i,j}(X,Z;\theta) \leq t\:|\:Z, Z_i=q,Z_j=\l, A_{i,j}=\delta)\nonumber \\
&=\P_{\theta'}( \bell(X_{i,j},q,\l;\theta)\leq t\:|\:
 Z_i=q,Z_j=\l, A_{i,j}=\delta) .\label{functionq}
\end{align}
Note that the latter quantity does not depend on  $(i,j)\in\mathcal{A}$.
In the Gaussian case, the quantities $\q_0(t,q,\l; \theta',\theta)$ and $\q_1(t,q,\l; \theta',\theta)$ can be explicitly calculated, see Section~\ref{sec:computGaussian}.  They can be interpreted as the size of the rejection area, under the null and the alternative, respectively, see Figure~\ref{fig:optimalGaussian}. 

\begin{figure}[h!]
\begin{center}
\begin{tabular}{ccc}
\vspace{-0.5cm}
$\sigma_0=1$, $\mu_{q,\l}=1$, $\sigma_{q,\l}=1$&$\sigma_0=1$,  $\mu_{q,\l}=1$, $\sigma_{q,\l}=2$ \\
\vspace{-0.5cm}
\includegraphics[scale=0.4]{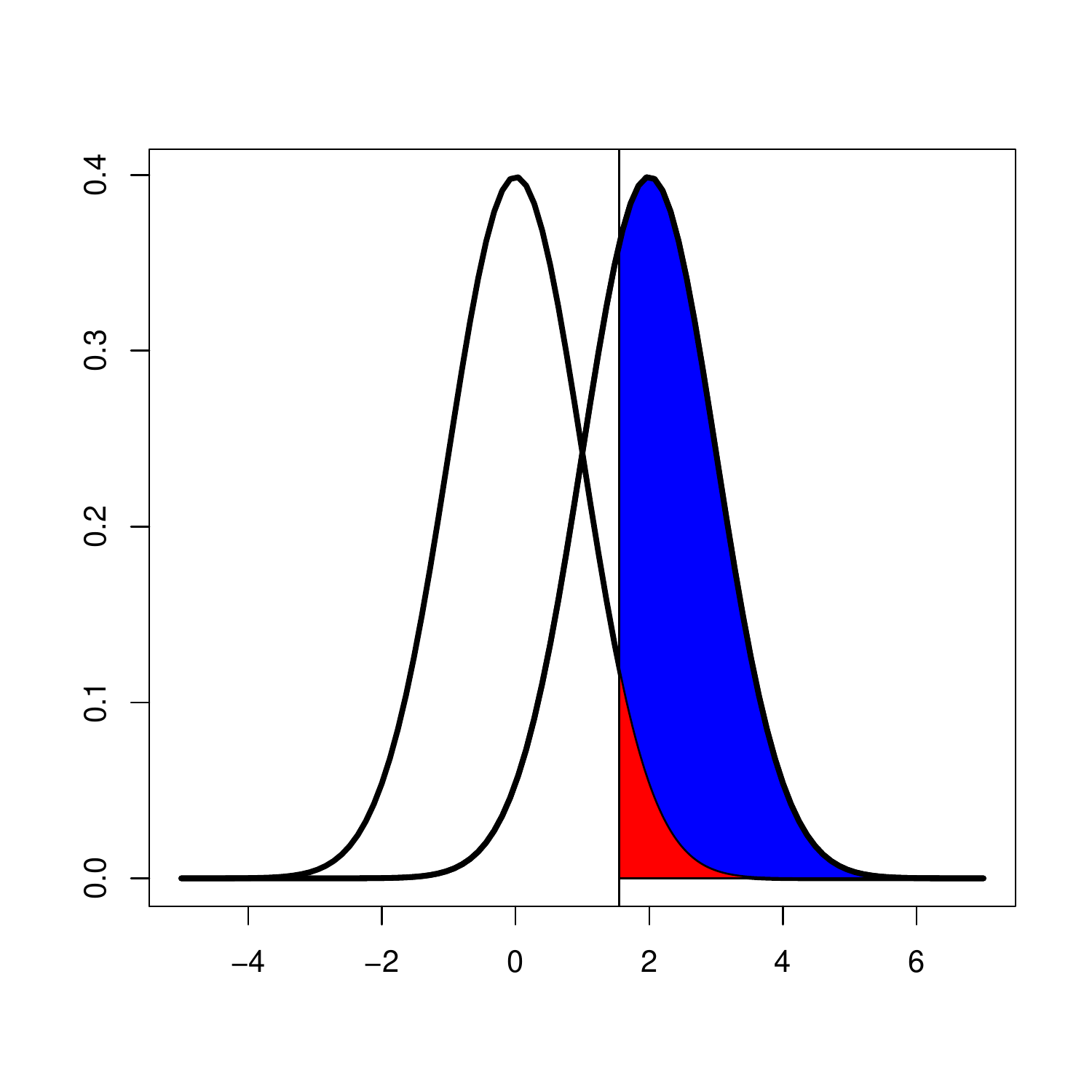}&\includegraphics[scale=0.4]{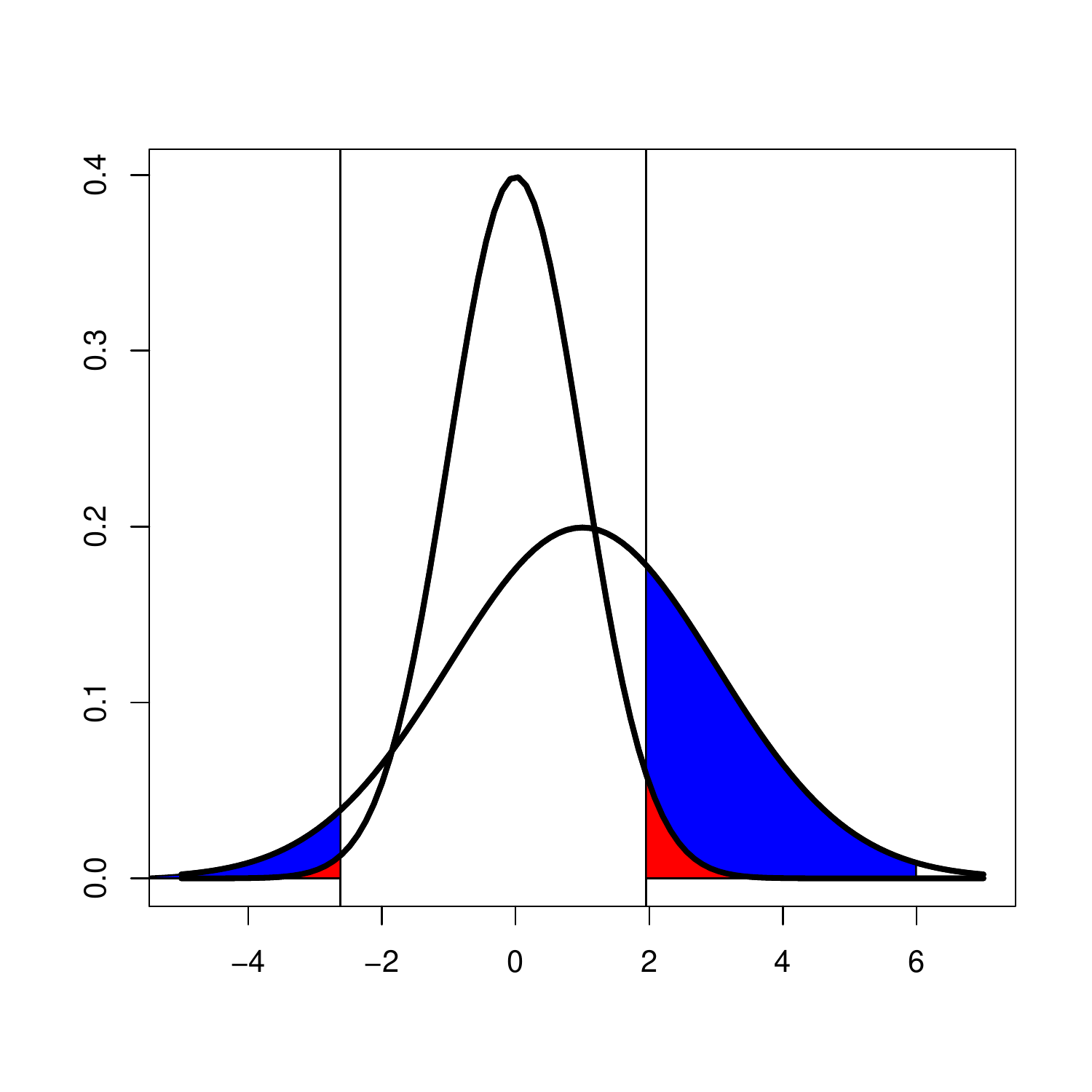}
\end{tabular}
\end{center}
\caption{Rejection area $\{x\::\:\bell(x,q,\l;\theta) \leq t\}$, for two choices of the parameters $(\pi_{q,\l},w_{q,\l},\sigma_0, \mu_{q,\l}, \sigma_{q,\l})$.  $t=0.25$, $w_{q,\l}=0.5$ (does not depend on the value of  $\pi_{q,\l}$). 
The value of $\q_0(t,q,\l; \theta,\theta)$ corresponds to the red area. 
The value of $\q_1(t,q,\l; \theta,\theta)$ corresponds to the red+blue area. 
  \label{fig:optimalGaussian}}
\end{figure}

Now, rejecting the null $H_{0,i,j}$ whenever $\ell_{i,j}(X,Z;\theta) \leq t$ for $t$ the largest such that $\Q_{\theta_0}(\theta,t)\leq \alpha$ leads to the decision of rejecting $H_{0,i,j}$ when 
$\Q_{\theta_0}(\theta,\ell_{i,j}(X,Z;\theta)) \leq \alpha$, that is, $q_{i,j}(X,Z;\theta)\leq \alpha$
where
\begin{align}
q_{i,j}(X,z;\theta)&=\Q_\theta(\theta,\ell_{i,j}(X,z;\theta)), \: (i,j)\in \cA.\label{equ-qvaluescond}
\end{align}  
are referred to as the {\it $q$-values}, a term that comes back to \cite{Storey2003}. 

\subsection{New procedure}\label{sec:proc}
   
When $X$ follows the NSBM with  "true" parameter $\theta_0\in\Theta$ and with latent clustering $Z$,  the oracle $\l$-value (resp. $q$-value) is $\l_{i,j}(X,Z;\theta_0)$ (resp. $q_{i,j}(X,Z;\theta_0)$). 
We thus define the {\it oracle multiple testing procedure} as
\begin{align}\label{def:optimalproc}
\varphi^*_{i,j}=
\ind{q_{i,j}(X,Z;\theta_0)\leq \alpha} ,\:\:(i,j)\in\mathcal{A}.
\end{align}
As proved in Lemma~\ref{lem:optimalproc}, it enjoys the following optimal property: it maximizes the TDR among procedure controlling the MFDR at level $\alpha$ (under appropriate assumptions).  So the procedure $\varphi^*$ will be considered in the sequel as the optimal procedure.

Obviously, the oracle procedure is unknown. However, it can be approximated thanks to the estimator $\wh{\theta}=(\wh{\pi},\wh{w},\wh{\nu}_0,\wh{\nu})$ of $\theta_0$ and the clustering $\wh{Z}$ built in Section~\ref{sec:VEMintro}. Let us define the estimated version of the $q$-values/$\l$-values as follows: 
\begin{align}
\wh \l_{i,j}(X)&= \l_{i,j}(X,\wh{Z};\wh{\theta}) = \frac{(1-\wh w_{\hat{Z}_i,\hat{Z}_j}) g_{0,\hat{\nu}_0}(X_{i,j})}{(1-\wh w_{\hat{Z}_i,\hat{Z}_j})  g_{0,\hat{\nu}_0}(X_{i,j}) +\wh w_{\hat{Z}_i,\hat{Z}_j}g_{\hat{\nu}_{\hat{Z}_i,\hat{Z}_j} }(X_{i,j})}; \label{formulalvalueVEMMAP}\\
\wh q_{i,j}(X)&= q_{i,j}(X,\wh{Z};\wh{\theta}) = \Q_{\hat{\theta}}(\wh{\theta},\wh{\ell}_{i,j}(X)).\label{formulaqvalueVEMMAP}
\end{align}
Then, we define our main procedure as follows: 

\IncMargin{1em}
\begin{algorithm}[H]
  \label{MTVEM}
   \KwData{$X_{i,j}$, $(i,j)\in\mathcal{A}$, level $\alpha$.}
  \KwResult{ $\varphiVEM_{i,j}$, $(i,j)\in\mathcal{A}$.}
  Apply the VEM algorithm of Section~\ref{sec:VEMintro}, that is, Algorithm~\ref{VEMalgo},  to obtain $\wh{\theta}$ and $\wh{Z}$;\\
 Compute the $\ell$-values $\wh{\ell}_{i,j}(X)$ according to \eqref{formulalvalueVEMMAP};\\
 Compute the $q$-values $\wh{q}_{i,j}(X)$ according to \eqref{formulaqvalueVEMMAP};\\
  Set $\varphiVEM_{i,j}=\mathds{1}\{\wh{q}_{i,j}(X)\leq \alpha\}$, $(i,j)\in\mathcal{A}$.
     \caption{Graph inference with FDR control in the noisy SBM}
\end{algorithm}

\section{Theoretical results for the new procedure}\label{sec:theory}

We propose a theoretical study showing that our procedure $\varphiVEM$ both correctly controls the FDR and has a TDR close to the one of the optimal procedure $\varphi^*$ \eqref{def:optimalproc}. 
 
While Sections~\ref{sec:assumption}~and~\ref{sec:results} present the general theory, Section~\ref{sec:exempleGaussian} focus on the Gaussian case. In particular,  a simplified and self-contained Gaussian version of our results can be found in Corollary~\ref{corGaussian}. 
All the results of this section are proved in Section~\ref{sec:proof}.
\subsection{Assumptions and notation}\label{sec:assumption}

According to \eqref{formulaqvalueVEMMAP}, the behavior of $\Q_{\theta}(\cdot)$ is crucial to study the behavior of $\varphiVEM$. Since the latter is related to the functionals $\q_0(\cdot)$ and $\q_1(\cdot)$ via \eqref{equ:Qcond}, we introduce the following assumption.

\begin{assumption}[Regularity]\label{cont} 
\begin{itemize}
\item[(i)] For all $q,\l\in\{1,\dots,Q\}$, 
 the functions 
$(t,\theta,\theta_0)\in [0,1]\times \Theta^2 \mapsto \q_0(t,q,\l; \theta_0,\theta)$ and $(t,\theta,\theta_0)\in [0,1]\times \Theta^2 \mapsto \q_1(t,q,\l; \theta_0,\theta)$ are continuous on $[0,1]\times \Theta^2$.
\item[(ii)] For all  $q,\l\in\{1,\dots,Q\}$ the following holds: there exist functions   $\theta\in \Theta\mapsto t_{1,q,\l}(\theta)\in[0,1]$ and $\theta\in \Theta \mapsto t_{2,q,\l}(\theta)\in[0,1]$ with $t_{1,q,\l}(\theta)<t_{2,q,\l}(\theta)$ for all $\theta$, and such that for any $\theta_0\in \Theta,$ the maps $t\in[0,1]\mapsto  \q_0(t,q,\l; \theta_0,\theta)$ (resp.  $t\in[0,1]\mapsto  \q_1(t,q,\l; \theta_0,\theta)$) are continuous on $[0,1]$, with value $0$ on $[0,t_{1,q,\l}(\theta)]$, increasing on $[t_{1,q,\l}(\theta),t_{2,q,\l}(\theta)]$ and value $1$ on $[t_{2,q,\l}(\theta),1]$.
\end{itemize}
\end{assumption}

In the sequel, we assume that Assumption~\ref{cont} is true and we consider for the $t_{1,q,\l}(\theta),t_{2,q,\l}(\theta)$ defined therein, the boundaries
\begin{align}
t_1(\theta)=\min_{1\leq q,\l\leq Q} \{t_{1,q,\l}(\theta)\} \mbox{ and } t_2(\theta)=\max_{1\leq q,\l\leq Q} \{t_{2,q,\l}(\theta)\},\label{deft1t2}
\end{align}
for any $\theta\in\Theta$. 
Lemma~\ref{lem:functionQvalue} states that the function $t\mapsto \Q_{\theta}(\theta,t)$ has the following simple behavior: it is increasing on $[t_1(\theta),t_2(\theta)]$, continuous on $(t_1(\theta),1]$, satisfies $\Q_{\theta}(\theta,t)=0$ for $t\in [0,t_1(\theta)]$, $\Q_{\theta}(\theta,t)=\pizero$  for $t\in [t_2(\theta),1]$ and $\Q_{\theta}(\theta,t)< t$ for $t\in (t_1(\theta),1]$. 
The latter implies in particular that $t\mapsto \Q_{\theta}(\theta,t)$ is always continuous in $0^+$, but may jump in $t_1(\theta)^+$ when $t_1(\theta)>0$. Illustrations are provided in Figure~\ref{fig:QGaussian} in the Gaussian NSBM.

Since $t\in [0,1]\mapsto \Q_{\theta}(\theta,t)$ is always non-decreasing left-continuous, we can define its (generalized) inverse in $\alpha\in [0,\pi_0]$ by
\begin{align}\label{def:optimalthreshold}
T_{\theta}(\alpha)=\max\{t\in[0,1]\::\: \Q_{\theta}(\theta,t) \leq \alpha\} , \:\:\theta\in\Theta.
\end{align}
this entails that  the optimal procedure $\varphi^*$ and our procedure $\varphiVEM$ can be equivalently written as $\ell$-value thresholding procedures, that is, for $(i,j)\in\mathcal{A}$,
\begin{align*}
\varphi^*_{i,j}&=\ind{\ell_{i,j}(X,Z,\theta_0)\leq T_{\theta_0}(\alpha)} =\ind{\Q_{\theta_0}(\theta_0,\ell_{i,j}(X,Z,\theta_0))\leq \alpha};\\
\varphiVEM_{i,j}&=\ind{\ell_{i,j}(X,\wh{Z},\wh{\theta})\leq T_{\hat{\theta}}(\alpha)} =\ind{\Q_{\hat{\theta}}(\wh{\theta},\ell_{i,j}(X,\wh{Z},\wh{\theta}))\leq \alpha},
\end{align*}
for which we recall that $\theta_0$ is the true value of the parameter.

Now, to show that $\varphiVEM$ is close to $\varphi^*$ in terms of FDR and TDR, there are four ingredients, that we now present.

\paragraph{Super criticality}

Let us fix $\theta_0\in\Theta$ the "true" value of the parameter. First, to avoid the regime where $\Q_{\theta_0}(\theta_0,t)$ is zero, we will consider a level $\alpha$ above the critical level $\alpha_*(\theta_0)$, that is defined as follows:
\begin{align}\label{def:alphastar}
\alpha_*(\theta_0)=\lim_{t\to t_1(\theta_0)^+}\{\Q_{\theta_0}(\theta_0,t)\}\in[0,\pizero).
\end{align}
It corresponds to the infimum of the non-zero values of $t\mapsto \Q_{\theta_0}(\theta_0,t)$. 
While  $\alpha_*(\theta_0)=0$ is the typical case, the case for which $\alpha_*(\theta_0)>0$ is possible when $t_1(\theta_0)>0$. 
This is related to the criticality phenomenon introduced in \cite{Chi2007}.
Sometimes, we will denote $\alpha_*(\theta_0)$ by $\alpha_*$ in the sequel for short. 
Throughout this section, we thus fix a "super-critical" nominal level  $\alpha\in (\alpha_*,\pizero)$.

\paragraph{Perfect clustering and appropriate estimation}

Second, our results rely on the fact that the estimator $\wh{\theta}$ and the clustering $\wh{Z}$ used in the procedure $\varphiVEM$ both have an appropriate behavior, that is, $(\wh{\theta}, \wh{Z})$ is close to $(\theta_0, Z)$ in some sense. Obviously, since the clustering can only be made  up to a permutation of the labels, we should define an appropriate distance between these quantities. In the sequel, we let for any $Z,Z'\in\{1,\dots,Q\}^n$, $\theta,\theta'\in\Theta$, 
\begin{equation}\label{normswitch}
\| (\theta', Z')-(\theta, Z)\| = \min_{\sigma} \left\{ \|(\theta')^\sigma-\theta\|_\infty\vee \|Z' - Z^\sigma\|_\infty   \right\},
\end{equation}
where the minimum is taken over all the permutation of $\{1,\dots,Q\}$, where $\|\cdot\|_\infty$ denote the infinite norm (defined each time on the appropriate subspace), and where $Z^\sigma= (\sigma(Z_i))_{1\leq i\leq n}$ and $\theta^\sigma=(\pi^\sigma ,w^\sigma,\nu_0,\nu^\sigma)$ for $\pi^\sigma=(\pi_{\sigma(q)})_{1\leq q\leq Q}$, $w^\sigma=(w_{\sigma(q),\sigma(\l)})_{1\leq q,\l\leq Q}$, $\nu^\sigma=(\nu_{\sigma(q),\sigma(\l)})_{1\leq q,\l\leq Q}$.

As a consequence, on the event where $\| (\wh{\theta},\wh{ Z}) - (\theta_0, Z)\|\leq \eps$ ($\eps\in (0,1)$), there exists a permutation $\sigma$ such that both $\wh{ Z}=Z^\sigma$ and $\|\wh{ \theta}^\sigma-\theta_0\|_\infty\leq \eps$.
Since in that case $\l_{i,j}(X,\wh{Z};\wh{\theta})=\l_{i,j}(X,Z^{\sigma};\wh{\theta})=\l_{i,j}(X,Z;\wh{\theta}^\sigma)$ and $\Q_{\hat{\theta}^\sigma}(\wh{\theta}^\sigma,t)=\Q_{\hat{\theta}}(\wh{\theta},t)$, we have $\varphiVEM=\varphisimple$, where
\begin{equation}\label{varphisimple}
\varphisimple_{i,j}=\ind{\Q_{\hat{\theta}^\sigma}(\wh{\theta}^\sigma,\l_{i,j}(X,Z,\wh{\theta}^\sigma))\leq \alpha}=\mathds{1}\{\l_{i,j}(X,Z,\wh{\theta}^\sigma)\leq T_{\hat{\theta}^\sigma}(\alpha)\}.
\end{equation}
The latter is easier   to study than $\varphiVEM$ because $\|\wh{ \theta}^\sigma-\theta_0\|_\infty\leq \eps$ and $Z$ is the true clustering. As a counterpart, this adds an error term $\P_{\theta_0}( \| (\wh{\theta}, \wh{Z}) - (\theta_0, Z)\|>\eps)$ in the bound.

\paragraph{Concentration of the FDP process}

We show that the FDP process (the process for which the expectation is the FDR, see \eqref{equ:FDP})  concentrates around the MFDR, in an uniform manner. 
This comes from Lemma~\ref{lem:concFDPprocess}, which relies on the independence property \eqref{indep} and classical DKW-type inequalities. Nevertheless, to get uniformity in the decision class, the complexity of the involved events should be appropriately taken into account. For this, for some positive integer $K>0$, let $\mathcal{I}_K$ be the set of all possible unions of $K$ open intervals of $\R$, that is,
\begin{align}\label{equIk}
\mathcal{I}_K =
 \left\{ \bigcup_{k=1}^K (a_k,b_k), -\infty\leq a_k\leq b_k\leq +\infty \mbox{ for } 1\leq k \leq K,  \mbox{ and } b_{k}\leq a_{k+1}  \mbox{ for } 1\leq k\leq K-1 \right\}.
\end{align}
For instance, the set $(-\infty, -1)\cup (5, 7)$ is in $\mathcal{I}_2$, is also in $\mathcal{I}_3$ (because empty intervals are allowed), but is not in $\mathcal{I}_1$.
We then define the following assumption.

\begin{assumption}[Complexity]\label{AssumptionIk}
$\forall (i,j)\in\mathcal{A},$ $\forall t\in[0,1]$, $\forall \theta\in \Theta$, 
$$\{ \l_{i,j}(X,Z,\theta) \leq t\} =\{ \bell(X_{i,j},Z_i,Z_j,\theta) \leq t\} = \{X_{i,j} \in I\},$$ for some $I \in \mathcal{I}_K$ depending only on $Z_i,Z_j,\theta$ and $t$.
\end{assumption}

\paragraph{Smoothness of the functions $\Q$, $\q_1$ and $T$}

First note that bounding the fluctuations of $\theta\mapsto \Q_{\theta}(\theta,t)$ from those of the functionals $\q_0$ and $\q_1$ is possible when the denominator of $\Q_{\theta}(\theta,t)$ is provided to be away from $0$. For this, we consider an (arbitrary) compact interval $\mathcal{K}\subset (\alpha_*,\pizero)$ such that $\alpha$ belongs to the interior of it, and we let, for $\theta_0=(\pi,w,\nu_0,\nu)$ (see \eqref{equ:Qcond2}),
 \begin{align}
\kappa(\theta_0,\alpha)= \sum_{q,\l} \pi_q \pi_\l &[(1-w_{q,\l})  \q_0(t_\mathcal{K} ,q,\l; \theta_0,\theta_0) \nonumber\\
&+w_{q,\l}  \q_1(t_\mathcal{K},q,\l; \theta_0,\theta_0)] , \mbox{ for } t_\mathcal{K}=T_{\theta_0}(\min \mathcal{K}).\label{equ:kappa}
\end{align}
Obviously, we have $\kappa(\theta_0,\alpha)\in (0,1]$.
Finally, we consider the following continuity moduli:  for all $u\in(0,1)$,
\begin{align}
\mathcal{W}_{\alpha,T}(u)&=\sup\{|T_{\theta_0}(y)-T_{\theta_0}(\alpha)|\: : \:  y\in \mathcal{K}, |y-\alpha|\leq u\}\label{equmodulust};\\
\mathcal{W}_{ T,\q_1}(u)&= \sup_{q,\l} \sup \left\{
\left|
\q_1(t,q,\l; \theta_0,\theta_0)-\q_1(T_{\theta_0}(\alpha),q,\l; \theta_0,\theta_0) \right|, t \in T_{\theta_0}(\mathcal{K}) , |t-T_{\theta_0}(\alpha)|\leq u \right\}\label{equmodulusq1}
\\
\mathcal{W}_{\theta_0,\q}(u)&= \sup_{q,\l} \sup_{t\in T_{\theta_0}(\mathcal{K})}\sup_{\delta\in\{0,1\}} \sup \left\{
\left|
\q_\delta(t,q,\l; \theta',\theta)-\q_\delta(t,q,\l; \theta_0,\theta_0) \right| \::\: \right. \nonumber\\
&\hspace{5cm}\left. \theta,\theta'\in \Theta, \|\theta-\theta_0\|_\infty\leq u ,\|\theta'-\theta_0\|_\infty\leq u \right\}\label{equmodulusq};.
\end{align}
Above, we implicitly used the generic notation "$\mathcal{W}_{x,f}$" for the modulus of the function "$f$" in the point "$x$".

\begin{remark}\label{rem:diff}
By Assumption~\ref{cont}, the limits of the functions $\mathcal{W}_{\theta_0,\q}(u)$, $\mathcal{W}_{ T,\q_1}(u)$ are both equal to zero when $u$ goes to zero. Also, as the inverse of a continuous increasing function, the function $y\in \mathcal{K}\mapsto T_{\theta_0}(y)$ is also continuous increasing and thus the limit of $\mathcal{W}_{\alpha,T}(u)$ is $0$ when $u$ goes to zero. 
In addition, when $T_{\theta_0}$ (resp. $\q_1$, $\q$) is differentiable in $y=\alpha$ (resp. $t=T_{\theta_0}(\alpha)$, $(\theta',\theta)=(\theta_0,\theta_0)$), we have that for 
some constants $e(\theta_0,\alpha,Q), C(\theta,\alpha,Q)>0$, for all $u\in (0,e(\theta_0,\alpha,Q))$, 
 $\mathcal{W}_{\alpha,T}(u)\leq C(\theta_0,\alpha,Q) u$ (resp. $\mathcal{W}_{ T,\q_1}(u)\leq C(\theta_0,\alpha,Q) u$, $\mathcal{W}_{\theta_0,\q}(u)\leq C(\theta_0,\alpha,Q) u)$.
\end{remark}

\subsection{Results}\label{sec:results}

As a first result, we provide  the behavior of the FDR of the procedure $\varphiVEM$.
\begin{theorem}\label{FDRcontrol}
There exist universal constants $c_1,c_1',c_2,c_2'>0$ such that the following holds.
Let  Assumptions~\ref{cont}-\ref{AssumptionIk} be true and let $\theta_0=(\pi,w,\nu_0,\nu)\in\Theta$. Consider $\alpha_*=\alpha_*(\theta_0)$ given by \eqref{def:alphastar}, $\alpha\in \mathcal{K} \subset (\alpha_*,\pizero)$ for some compact interval $\mathcal{K}$, $\kappa=\kappa(\theta_0,\alpha)$ given by \eqref{equ:kappa} and the modulus $\mathcal{W}_{\theta_0,\q}$ defined by \eqref{equmodulusq}.
Let $\pimin=\min_{q}\{\pi_q\}$ and $\wmax=\max_{q,\l} \{w_{q,\l}\}.$
Consider the procedure 
 $\varphiVEM$ of Algorithm~\ref{MTVEM} for the VEM estimator $\wh{\theta}$ and clustering $\wh{Z}$. Then there exists $e=e(\theta_0,\alpha,Q)\in(0,1)$  such that for all $\eps\in (0,e)$,
 for all $x>0$ with $x<\pimin^2\wedge (1-\wmax)$,
\begin{align*}
\FDR\left(\theta_0,\varphiVEM\right) \leq\:&  \alpha +x + 16 \kappa^{-1} (\mathcal{W}_{\theta_0,\q}(\eps)+3Q^2\eps) + \P_{\theta_0}( \| (\wh{\theta}, \wh{Z}) - (\theta_0, Z)\|>\eps)\\
&+ c_1 Q^2 \e^{-c_1'\lfloor n/2\rfloor \kappa^2 x^2 / Q^4} + c_2 K  Q^2 \e^{- c_2' m \pimin^2 (1-\wmax) \kappa^2 x^2/K^2}.
\end{align*}
\end{theorem}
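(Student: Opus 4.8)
The plan is to decompose the bound into the four "ingredients" announced in Section~\ref{sec:assumption} and control each separately. First I would reduce $\varphiVEM$ to the auxiliary procedure $\varphisimple$ of \eqref{varphisimple}: on the event $E_\eps=\{\| (\wh{\theta}, \wh{Z}) - (\theta_0, Z)\|\leq \eps\}$ we have $\varphiVEM=\varphisimple$ for the appropriate permutation $\sigma$, so
\[
\FDR(\theta_0,\varphiVEM)\leq \E_{\theta_0}\!\left[\FDP(\varphisimple)\,\ind{E_\eps}\right] + \P_{\theta_0}(E_\eps^c),
\]
which produces the term $\P_{\theta_0}( \| (\wh{\theta}, \wh{Z}) - (\theta_0, Z)\|>\eps)$. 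On $E_\eps$ the procedure $\varphisimple$ thresholds the \emph{true-clustering} $\ell$-values $\l_{i,j}(X,Z,\wh\theta^\sigma)$ at $T_{\hat\theta^\sigma}(\alpha)$, with $\|\wh\theta^\sigma-\theta_0\|_\infty\leq \eps$, which is exactly the regime where the remaining ingredients apply.

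Next I would use the concentration of the FDP process (Lemma~\ref{lem:concFDPprocess}): conditionally on $Z$, the variables $\l_{i,j}(X,Z,\theta)$ are independent by \eqref{indep}, and by Assumption~\ref{AssumptionIk} the rejection events are of the form $\{X_{i,j}\in I\}$ with $I\in\mathcal{I}_K$, a class of controlled complexity. A DKW-type argument then yields, uniformly over $\theta$ in an $\eps$-neighbourhood of $\theta_0$ and over thresholds $t$, that the empirical FDP is within $x$ (say) of its deterministic counterpart $\Q_{\theta'}(\theta,t)$-type ratio, except on an event of probability at most $c_1 Q^2 \e^{-c_1'\lfloor n/2\rfloor \kappa^2 x^2 / Q^4} + c_2 K Q^2 \e^{- c_2' m \pimin^2 (1-\wmax) \kappa^2 x^2/K^2}$ — the two exponential terms come respectively from concentration of the group sizes (the $n$-scale, $Q^4$ in the exponent) and of the within-block counts of null/non-null edges (the $m$-scale, with the factor $\pimin^2(1-\wmax)$ controlling the smallest block's null mass and $\kappa$ keeping the denominator away from $0$). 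The splitting $\lfloor n/2\rfloor$ reflects a standard decoupling of the pair-indexed sum into independent halves.

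Then I would quantify how far the deterministic threshold and FDR level move when $\theta$ is perturbed by $\eps$. Since on $E_\eps$ we evaluate $\Q_{\hat\theta^\sigma}(\wh\theta^\sigma,\cdot)$ rather than $\Q_{\theta_0}(\theta_0,\cdot)$, I would write the difference in terms of the functionals $\q_0,\q_1$ via \eqref{equ:Qcond2}: the numerator and denominator of $\Q$ are Lipschitz in $(\pi,w)$ (bounded by $3Q^2\eps$ after accounting for the $Q^2$ summands and the products $\pi_q\pi_\l(1-w_{q,\l})$) and move by at most $\mathcal{W}_{\theta_0,\q}(\eps)$ through $\q_0,\q_1$; dividing by the denominator, bounded below by $\kappa=\kappa(\theta_0,\alpha)$, and using that $\varphisimple$ rejects at the level set $\Q\leq\alpha$, gives an excess of order $\kappa^{-1}(\mathcal{W}_{\theta_0,\q}(\eps)+3Q^2\eps)$; tracking the constant carefully (the threshold perturbation feeds back once more into $\Q$) produces the stated factor $16$. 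Collecting the four contributions — $\alpha$ from the nominal level, $x$ from the concentration slack, $16\kappa^{-1}(\mathcal{W}_{\theta_0,\q}(\eps)+3Q^2\eps)$ from the smoothness, $\P_{\theta_0}(E_\eps^c)$ from the clustering, and the two exponential tails — gives the theorem, with $e(\theta_0,\alpha,Q)$ chosen small enough that for $\eps<e$ the perturbed parameter stays in $\Theta$, the denominator stays above $\kappa/2$, and $\alpha\pm(\text{slack})$ stays inside $\mathcal{K}\subset(\alpha_*,\pizero)$ (this uses super-criticality, which prevents $\Q_{\theta_0}(\theta_0,\cdot)$ from being identically zero near the relevant threshold).

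\textbf{Main obstacle.} The delicate point is the \emph{uniform} concentration step: one must concentrate the FDP not at a fixed threshold but at the \emph{data-dependent} threshold $T_{\hat\theta^\sigma}(\alpha)$, simultaneously over all $\wh\theta^\sigma$ in an $\eps$-ball, while the events indexing the empirical process live in the infinite class $\mathcal{I}_K$. Getting the right dependence on $K$, $Q$, and the block-size fluctuations (hence the two separate exponential terms at scales $n$ and $m$) requires carefully combining a VC/DKW bound over $\mathcal{I}_K$ with a union bound over the $\le Q^2$ block types and a Bernstein/Hoeffding bound on the $Z_i$'s, and then transferring the uniform-in-$t$ control to the inverse function $T$ via its strict monotonicity on $[t_1(\theta),t_2(\theta)]$ — this is where Lemma~\ref{lem:functionQvalue} and the super-criticality assumption $\alpha>\alpha_*(\theta_0)$ are essential to avoid the flat part of $\Q$.
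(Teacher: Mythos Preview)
Your proposal is correct and follows essentially the same route as the paper: reduce to $\varphisimple$ on the event $\{\|(\wh\theta,\wh Z)-(\theta_0,Z)\|\leq\eps\}$, invoke Lemma~\ref{lem:concFDPprocess} (with $y=\kappa/2$) to replace the empirical $\FDP$ by $\Q_{\theta_0}(\wh\theta^\sigma,T_{\wh\theta^\sigma}(\alpha))$ at cost $x$ plus the two exponential tails, then use Lemma~\ref{lemqvaluedef} to control the perturbation of $\Q$ and $T$ by $2\eta(\eps)=16\kappa^{-1}(\mathcal{W}_{\theta_0,\q}(\eps)+3Q^2\eps)$. Your reading of the $\lfloor n/2\rfloor$ term (U-statistic decoupling for the group-size concentration) and of the role of $\kappa$ and super-criticality is accurate; the paper packages the smoothness step entirely into Lemma~\ref{lemqvaluedef}, but the content is exactly what you describe.
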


Theorem~\ref{FDRcontrol} is proved in Section~\ref{proofofFDRcontrol}.
It shows that the FDR of $\varphiVEM$ is close to the targeted level $\alpha$, up to a remainder term.

We now turn to the optimality result of the procedure $\varphiVEM$, in terms of the TDR, as defined by \eqref{equ:TDR}.

\begin{theorem}\label{Powercontrol}
Consider the setting of Theorem~\ref{FDRcontrol} and additionally let  $\wmin=\min_{q,\l} \{w_{q,\l}\}$. Consider the functions  $\mathcal{W}_{T,\q_1}$, $\mathcal{W}_{\alpha,T}$ given respectively by  \eqref{equmodulusq1},\eqref{equmodulust} and the  optimal procedure $ \varphi^*$ defined by \eqref{def:optimalproc}.
 Then there exists $e=e(\theta_0,\alpha,Q)\in(0,1)$  such that for all $\eps\in (0,e)$,
 for all $x>0$ with $x<\pimin^2\wedge \wmin$,
\begin{align*}
\pione \TDR\left(\theta_0, \varphiVEM\right) 
\geq&\: \pione \TDR\left(\theta_0, \varphi^*\right)- x
-\pione \P_{\theta_0}( \| (\wh{\theta}, \wh{Z}) - (\theta_0, Z)\|>\eps)   \\
&-2\mathcal{W}_{\theta_0,\q}(\eps)-6Q^2\eps- \mathcal{W}_{T,\q_1}\circ  \mathcal{W}_{\alpha,T} \left(8 \kappa^{-1} (\mathcal{W}_{\theta_0,\q}(\eps)+3Q^2\eps)\right)\\
&-2 Q^2 \e^{-2\lfloor n/2\rfloor x^2 /(9 Q^4)} - 6 K  Q^2 \e^{- m \pimin^2 \wmin x^2/(9K^2)}.
\end{align*}
\end{theorem}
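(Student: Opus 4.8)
The plan is to compare $\varphiVEM$ first with the surrogate procedure $\varphisimple$ of \eqref{varphisimple}, then with the oracle $\varphi^*$, by concentrating the TDR numerator uniformly over a deterministic $\eps$-ball of parameters and finishing with a purely deterministic perturbation argument based on the smoothness moduli \eqref{equmodulusq}--\eqref{equmodulust}. Write $\Omega_\eps=\{\|(\wh\theta,\wh Z)-(\theta_0,Z)\|\le\eps\}$ and, for deterministic $\theta\in\Theta$, $\psi^\theta_{i,j}=\ind{\l_{i,j}(X,Z,\theta)\le T_\theta(\alpha)}$, so that $\varphi^*=\psi^{\theta_0}$ and, on $\Omega_\eps$, $\varphiVEM=\psi^{\wh\theta^\sigma}$ with $\|\wh\theta^\sigma-\theta_0\|_\infty\le\eps$ (the permutation $\sigma$ coming from \eqref{varphisimple}). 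By \eqref{equ:TDR}, $\pione\,\TDR(\theta_0,\varphi)=m^{-1}\E_{\theta_0}[\sum_{(i,j)\in\cA}A_{i,j}\varphi_{i,j}]$, and conditioning on $Z$ and using the conditional independence structure (exactly as when passing from \eqref{equ:Qcond} to \eqref{equ:Qcond2}) yields the exact identity $\pione\,\TDR(\theta_0,\varphi^*)=\sum_{q,\l}\pi_q\pi_\l w_{q,\l}\,\q_1(T_{\theta_0}(\alpha),q,\l;\theta_0,\theta_0)$, since $\varphi^*$ involves no estimated quantity. As a first step, on $\Omega_\eps$ one has $\sum A_{i,j}\varphiVEM_{i,j}\ge \inf_{\|\theta-\theta_0\|_\infty\le\eps}\sum A_{i,j}\psi^\theta_{i,j}$, and the nonnegative amount lost on $\Omega_\eps^c$ contributes, crudely and after dividing by $m$, at most $\pione\,\P_{\theta_0}(\Omega_\eps^c)$ (only present edges enter the numerator); so it remains to lower bound $m^{-1}\E_{\theta_0}[\inf_{\|\theta-\theta_0\|_\infty\le\eps}\sum_{(i,j)}A_{i,j}\psi^\theta_{i,j}]$.

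For the concentration step I would condition on $Z$: then the pairs $(A_{i,j},X_{i,j})_{(i,j)\in\cA}$ are independent, by Assumption~\ref{AssumptionIk} $\psi^\theta_{i,j}=\ind{X_{i,j}\in I}$ for some $I\in\mathcal{I}_K$ depending only on $(Z_i,Z_j,\theta)$, and $\E_{\theta_0}[A_{i,j}\psi^\theta_{i,j}\mid Z]=w_{Z_i,Z_j}\,\q_1(T_\theta(\alpha),Z_i,Z_j;\theta_0,\theta)$ by \eqref{functionq}. Applying a DKW/VC-type bound uniformly over $\mathcal{I}_K$ inside each of the (at most $Q^2$) sub-samples $\{(i,j):Z_i=q,Z_j=\l,A_{i,j}=1\}$ — whose cardinalities are of order $m\pimin^2\wmin$ with high probability, which is the content of Lemma~\ref{lem:concFDPprocess} — together with concentration of the group frequencies $n_q/n$ around $\pi_q$, gives that, off an event of probability at most $2Q^2\e^{-2\lfloor n/2\rfloor x^2/(9Q^4)}+6KQ^2\e^{-m\pimin^2\wmin x^2/(9K^2)}$, simultaneously for all $\|\theta-\theta_0\|_\infty\le\eps$, $m^{-1}\sum A_{i,j}\psi^\theta_{i,j}\ge \sum_{q,\l}\pi_q\pi_\l w_{q,\l}\,\q_1(T_\theta(\alpha),q,\l;\theta_0,\theta)-x$ (the three error sources being merged into the single slack $x$, accounting for the $1/9$'s and prefactors). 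Taking the infimum over the ball and an expectation then gives $m^{-1}\E_{\theta_0}[\inf_\theta\sum A\psi^\theta]\ge \inf_{\|\theta-\theta_0\|_\infty\le\eps}\sum_{q,\l}\pi_q\pi_\l w_{q,\l}\,\q_1(T_\theta(\alpha),q,\l;\theta_0,\theta)-x-2Q^2\e^{-2\lfloor n/2\rfloor x^2/(9Q^4)}-6KQ^2\e^{-m\pimin^2\wmin x^2/(9K^2)}$.

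Next comes the deterministic smoothness part. Fix $\eps<e(\theta_0,\alpha,Q)$ small enough that $T_\theta(\alpha)\in T_{\theta_0}(\mathcal{K})$ for $\|\theta-\theta_0\|_\infty\le\eps$. By Lemma~\ref{lem:functionQvalue} the denominator of $\Q_\theta(\theta,\cdot)$ near the relevant threshold stays $\ge\kappa=\kappa(\theta_0,\alpha)$; replacing $\theta$ by $\theta_0$ in the numerator and denominator of \eqref{equ:Qcond2} (the coefficients $\pi_q\pi_\l(1-w_{q,\l})$ move by $O(Q^2\eps)$, the functions $\q_\delta$ by $\mathcal{W}_{\theta_0,\q}(\eps)$) yields $\sup_t|\Q_\theta(\theta,t)-\Q_{\theta_0}(\theta_0,t)|\le 8\kappa^{-1}(\mathcal{W}_{\theta_0,\q}(\eps)+3Q^2\eps)=:\eta$, and inverting the non-decreasing maps (using $\alpha$ interior to $\mathcal{K}$) gives $|T_\theta(\alpha)-T_{\theta_0}(\alpha)|\le \mathcal{W}_{\alpha,T}(\eta)$. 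Then for each $q,\l$, by \eqref{equmodulusq} and \eqref{equmodulusq1}, $\q_1(T_\theta(\alpha),q,\l;\theta_0,\theta)\ge \q_1(T_\theta(\alpha),q,\l;\theta_0,\theta_0)-\mathcal{W}_{\theta_0,\q}(\eps)\ge \q_1(T_{\theta_0}(\alpha),q,\l;\theta_0,\theta_0)-\mathcal{W}_{\theta_0,\q}(\eps)-\mathcal{W}_{T,\q_1}(\mathcal{W}_{\alpha,T}(\eta))$. Summing against the weights $\pi_q\pi_\l w_{q,\l}$ (total mass $\pione\le1$) and recalling the $\varphi^*$ identity, $\inf_{\|\theta-\theta_0\|_\infty\le\eps}\sum_{q,\l}\pi_q\pi_\l w_{q,\l}\,\q_1(T_\theta(\alpha),q,\l;\theta_0,\theta)\ge \pione\,\TDR(\theta_0,\varphi^*)-\mathcal{W}_{\theta_0,\q}(\eps)-\mathcal{W}_{T,\q_1}\circ\mathcal{W}_{\alpha,T}(8\kappa^{-1}(\mathcal{W}_{\theta_0,\q}(\eps)+3Q^2\eps))$. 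Chaining this with the two previous steps, and collecting the remaining $O(Q^2\eps)$ and $O(\mathcal{W}_{\theta_0,\q}(\eps))$ discrepancies incurred along the way (the replacement of the conditional-on-$Z$ mean by the $\pi$-weighted one and the edge-count normalisation in the concentration step), which upgrades the single $\mathcal{W}_{\theta_0,\q}(\eps)$ above to $2\mathcal{W}_{\theta_0,\q}(\eps)$ and introduces the $6Q^2\eps$, produces the stated inequality.

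The main obstacle is the concentration step: making it hold simultaneously for every $\theta$ in the ball and for the data-dependent threshold $T_\theta(\alpha)$. This is precisely where Assumption~\ref{AssumptionIk} is indispensable — it collapses the whole family of rejection regions $\{\l_{i,j}(X,Z,\theta)\le T_\theta(\alpha)\}$, as $\theta$ varies, into the fixed low-complexity class $\mathcal{I}_K$, so that one DKW/VC estimate per $(q,\l)$-sub-sample suffices — and where one must control from below the sizes of the ``edge'' sub-samples (of order $m\pimin^2\wmin$, hence the appearance of $\wmin$ here in place of the $\wmax$ of Theorem~\ref{FDRcontrol}). A second, purely deterministic, delicate point is pushing the $\eps$-perturbation of $\theta$ through the generalized inverse $\Q\mapsto T$ (Lemma~\ref{lem:functionQvalue}, needing the lower bound $\kappa$ on the denominator and $\alpha$ interior to $\mathcal{K}$) and then through $\q_1$, which is exactly what forces the nested modulus $\mathcal{W}_{T,\q_1}\circ\mathcal{W}_{\alpha,T}$ in the statement; by Remark~\ref{rem:diff} this composite term is itself $O(\eps)$ as soon as $T_{\theta_0}$ and $\q_1$ are differentiable at the relevant points, so it does not worsen the final rate.
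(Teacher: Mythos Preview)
Your proposal follows essentially the same route as the paper: reduce to $\varphisimple$ on $\Omega_\eps$, invoke the uniform concentration of $\wh F_1$ around $F_{1,\theta_0}$ (Lemma~\ref{lem:concF01}, which you attribute to Lemma~\ref{lem:concFDPprocess}), and finish with the deterministic perturbation bounds of Lemma~\ref{lemqvaluedef} (\eqref{majF01}, \eqref{majT}, \eqref{majF1}). The structure and the key tools are the same.

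Two bookkeeping points deserve tightening. First, your claim that ``the nonnegative amount lost on $\Omega_\eps^c$ contributes at most $\pione\,\P_{\theta_0}(\Omega_\eps^c)$'' is not justified as written: you would need $\E_{\theta_0}[m^{-1}\sum A_{i,j}\mathds{1}_{\Omega_\eps^c}]\le \pione\,\P_{\theta_0}(\Omega_\eps^c)$, but $\Omega_\eps^c$ depends on $(\wh\theta,\wh Z,Z)$ and hence on $A$, so this factorisation fails. The paper avoids the issue by keeping $\mathds{1}_{\Omega_\eps}\mathds{1}_{\Omega_1}$ until a \emph{deterministic} lower bound $D$ (with $0\le D\le \pione$) is reached on the good event, and only then using $D\,\P(\Omega_\eps\cap\Omega_1)\ge D-\pione\,\P(\Omega_\eps^c)-\pione\,\P(\Omega_1^c)$. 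Reordering your argument this way fixes the constant. Second, your explanation of where the extra $\mathcal{W}_{\theta_0,\q}(\eps)+6Q^2\eps$ comes from (``the replacement of the conditional-on-$Z$ mean by the $\pi$-weighted one'') is not right: those replacements are already absorbed into the concentration slack $x$ and the exponential tails via Lemma~\ref{lem:concF01}. In fact your smoothness step, since you only perturb the second argument of $\q_1(\cdot;\theta_0,\theta)$, legitimately yields $\mathcal{W}_{\theta_0,\q}(\eps)$ alone; the paper simply quotes the cruder bound $2\mathcal{W}_{\theta_0,\q}(\eps)+6Q^2\eps$ from \eqref{majF01} (written there as $\kappa\eta(\eps)/4$), so your tighter estimate already implies the stated inequality without the hand-waving.
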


Theorem~\ref{Powercontrol} is proved in Section~\ref{proofofPowercontrol}. It shows that the power of $\varphiVEM$ is close to the one of the optimal procedure $ \varphi^*$, up to a remainder term.

Both FDR and TDR bounds are non-asymptotic, and are available for any fixed $n\geq 2$ although $n$ should be large enough in order to make the remainder terms small. 
Our bounds involve several terms: the concentration term (that decreases exponentially fast), the moduli of continuity (that depends on the regularity of the involved functionals) and the quality of $(\hat{\theta}, \hat{Z})$ as estimation/clustering rules.  

As a side result, since the moduli of continuity have all a zero limit in zero, 
 Theorems~\ref{FDRcontrol}~and~\ref{Powercontrol} entail the following consistency result (proof provided in Section~\ref{sec:proofmaincor} for completeness).

\begin{corollary}\label{maincor}
Let  Assumptions~\ref{cont}-\ref{AssumptionIk} be true and let us consider an asymptotic in $n$ (and thus also in $m=n(n+1)/2$) for which the parameter $\theta_0\in\Theta$ is kept fixed (does not depend on $n$) and assume that the VEM estimator $\wh{\theta}$ and clustering $\wh{Z}$ are consistent, that is, such that $\P_{\theta_0}( \|( \wh{\theta}, \wh{Z}) - (\theta_0, Z)\|>\eps)$ converges to $0$ for any $\eps>0$ as $n$ tends to infinity. Consider $\alpha_*=\alpha_*(\theta_0)$ given by \eqref{def:alphastar}, $\alpha\in (\alpha_*,\pizero)$. Then the procedure 
 $\varphiVEM$ of Algorithm~\ref{MTVEM} and the procedure $ \varphi^*$ defined by \eqref{def:optimalproc} satisfy
\begin{align*}
\limsup_n \left\{ \FDR\left(\theta_0,\varphiVEM\right)\right\} \leq\:&  \alpha,\:\:\:\liminf_n \{ \TDR\left(\theta_0, \varphiVEM\right) -\TDR\left(\theta_0, \varphi^*\right)\}\geq 0.
\end{align*}
\end{corollary}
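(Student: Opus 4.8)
The plan is to derive Corollary~\ref{maincor} as a direct consequence of Theorems~\ref{FDRcontrol} and~\ref{Powercontrol} by letting $n\to\infty$ and then $\eps,x\to0$, exploiting the fact that all moduli of continuity vanish at $0$ (Remark~\ref{rem:diff}). Fix $\theta_0\in\Theta$ and $\alpha\in(\alpha_*,\pizero)$, and choose a compact interval $\mathcal{K}\subset(\alpha_*,\pizero)$ with $\alpha$ in its interior; note that $\kappa=\kappa(\theta_0,\alpha)\in(0,1]$, $\pimin$, $\wmax$, $\wmin$, and the threshold $e=e(\theta_0,\alpha,Q)\in(0,1)$ are all quantities depending only on $\theta_0,\alpha,Q$ and hence fixed along the asymptotic. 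Since $\theta_0$ is fixed, $m=n(n+1)/2\to\infty$ as $n\to\infty$.

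For the FDR bound: fix any $\eps\in(0,e)$ and any $x>0$ with $x<\pimin^2\wedge(1-\wmax)$. Theorem~\ref{FDRcontrol} gives, for every $n\geq 2$,
\begin{align*}
\FDR\left(\theta_0,\varphiVEM\right) \leq\:&  \alpha +x + 16 \kappa^{-1} (\mathcal{W}_{\theta_0,\q}(\eps)+3Q^2\eps) + \P_{\theta_0}( \| (\wh{\theta}, \wh{Z}) - (\theta_0, Z)\|>\eps)\\
&+ c_1 Q^2 \e^{-c_1'\lfloor n/2\rfloor \kappa^2 x^2 / Q^4} + c_2 K  Q^2 \e^{- c_2' m \pimin^2 (1-\wmax) \kappa^2 x^2/K^2}.
\end{align*}
Taking $\limsup_n$, the two exponential terms vanish (their exponents tend to $-\infty$ since $n,m\to\infty$ and all other factors are positive constants), and by the consistency hypothesis $\P_{\theta_0}( \| (\wh{\theta}, \wh{Z}) - (\theta_0, Z)\|>\eps)\to0$. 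Hence $\limsup_n \FDR(\theta_0,\varphiVEM) \leq \alpha + x + 16\kappa^{-1}(\mathcal{W}_{\theta_0,\q}(\eps)+3Q^2\eps)$. Now let $x\to0^+$ and $\eps\to0^+$: by Remark~\ref{rem:diff} (Assumption~\ref{cont}) the modulus $\mathcal{W}_{\theta_0,\q}(\eps)\to0$, so the right-hand side tends to $\alpha$, yielding $\limsup_n \FDR(\theta_0,\varphiVEM)\leq\alpha$.

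For the TDR bound I would argue symmetrically using Theorem~\ref{Powercontrol}. Fix $\eps\in(0,e)$ and $x>0$ with $x<\pimin^2\wedge\wmin$. Rearranging the theorem's inequality, $\TDR(\theta_0,\varphiVEM)-\TDR(\theta_0,\varphi^*)$ is bounded below by $-\pione^{-1}$ times a finite sum whose terms are: $x$; $\pione\,\P_{\theta_0}(\|(\wh{\theta},\wh{Z})-(\theta_0,Z)\|>\eps)$; $2\mathcal{W}_{\theta_0,\q}(\eps)+6Q^2\eps$; the composite modulus $\mathcal{W}_{T,\q_1}\circ\mathcal{W}_{\alpha,T}(8\kappa^{-1}(\mathcal{W}_{\theta_0,\q}(\eps)+3Q^2\eps))$; and two exponential terms. (Here $\pione>0$ since the $w_{q,\l}\in(0,1)$, so dividing is legitimate.) Taking $\liminf_n$ kills the exponential terms and the probability term as before. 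Then letting $\eps\to0^+$ and $x\to0^+$: the outer argument $8\kappa^{-1}(\mathcal{W}_{\theta_0,\q}(\eps)+3Q^2\eps)\to0$, and since by Remark~\ref{rem:diff} both $\mathcal{W}_{\alpha,T}$ and $\mathcal{W}_{T,\q_1}$ are continuous at $0$ with $\mathcal{W}_{\alpha,T}(0)=0=\mathcal{W}_{T,\q_1}(0)$, the composite modulus tends to $0$; likewise $\mathcal{W}_{\theta_0,\q}(\eps)\to0$. Hence $\liminf_n\{\TDR(\theta_0,\varphiVEM)-\TDR(\theta_0,\varphi^*)\}\geq0$, which completes the proof.

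There is essentially no obstacle here; the one point requiring minor care is the order of limits — one must first send $n\to\infty$ for fixed $\eps,x$ (so that the consistency hypothesis and the exponential decay apply), and only afterwards send $\eps,x\to0$ to kill the moduli. A secondary subtlety is the continuity of the \emph{composition} $\mathcal{W}_{T,\q_1}\circ\mathcal{W}_{\alpha,T}$ at $0$, which follows from each factor being continuous at $0$ and vanishing there, as noted in Remark~\ref{rem:diff}. Everything else is routine bookkeeping of constants that do not depend on $n$.
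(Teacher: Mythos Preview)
Your proof is correct and follows essentially the same approach as the paper's own proof: apply Theorems~\ref{FDRcontrol} and~\ref{Powercontrol} for fixed $\eps,x$, let $n\to\infty$ to eliminate the exponential and consistency terms, then send $\eps,x\to0$ and invoke Remark~\ref{rem:diff} so that all moduli vanish. Your write-up is in fact more careful than the paper's terse version, explicitly justifying the order of limits and the continuity of the composite modulus at $0$.
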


Let us mention that establishing the consistency of the VEM estimator/clustering has been investigated in \cite{Celisse_etal,bickel2013,Maria2017,Maria2019} in different SBM-type models. 

To our knowledge, Theorems~\ref{FDRcontrol}~and~\ref{Powercontrol}  are the first non-asymptotic bounds showing FDR control and TDR optimality in a mixture model context. In comparison, results in \cite{SC2007,CS2009,SC2009,CSW2019} only establish consistency. 
Here, our non-asymptotic bounds provide more informations: for instance,  when the moduli $\mathcal{W}_{\alpha,T}(u)$, $\mathcal{W}_{ T,\q_1}(u)$ and $\mathcal{W}_{\theta_0,\q}(u)$ are all smaller than some constants times $u$, we can choose $x=\eps=\sqrt{(\log n)/n}$ so that for $\eta_n=\P_{\theta_0}\left( \| (\wh{\theta}, \wh{Z}) - (\theta_0, Z)\|>\sqrt{(\log n)/n}\right)$, we have
\begin{align*}
\FDR\left(\theta_0,\varphiVEM\right) &\leq  \alpha +O\left(\sqrt{(\log n)/n}\right)  + \eta_n\\
 \TDR\left(\theta_0, \varphi^*\right)- \TDR\left(\theta_0, \varphiVEM\right)&\leq O\left(\sqrt{(\log n)/n}\right)  + \eta_n.
\end{align*}
Our bounds thus entail convergence rates, although the remainder term $\eta_n$ could in principle deteriorate this rate. 

\begin{remark}
Inspecting the proofs, Theorems~\ref{FDRcontrol}~and~\ref{Powercontrol} (and Corollary~\ref{maincor}) extend to any procedure of the form $\varphi_{i,j}=\ind{q_{i,j}(X,\wh{Z};\wh{\theta})\leq \alpha}$ that uses some estimator  $\wh{\theta}$ and clustering $\wh{Z}$ (possibly different of those coming from a VEM algorithm).
\end{remark}

\subsection{Application to the Gaussian case}\label{sec:exempleGaussian}

Let us illustrate our results in the Gaussian model \eqref{gaussmodel}. The properties will depend on the chosen parameter set $\Theta$, that can take various form. The basic parameter set is
\begin{align}
\Theta&=\:\left\{(\pi,w,\sigma_0,\mu,\sigma)\in (0,1)^Q \times (0,1)^{Q(Q+1)/2}\times (0,\infty) \times \R^{Q(Q+1)/2} \times (0,\infty)^{Q(Q+1)/2}\right. ;\nonumber\\
&\:\:\:\:\:\:\:  \left. \sum_{q=1}^Q \pi_q=1, \mbox{ the elements of }\{(0,\sigma_0),(\mu_{q,\l},\sigma_{q,\l}), 1\leq q\leq \l\leq Q \} \mbox{ are all distincts}\right\}.\label{sec:thetagauss}
\end{align}
We define also the following parameter sets including additional constraints:
\begin{itemize}
\item $\Theta_{\sigma_0}$ with  $\sigma_{q,\l}=\sigma_0, 1\leq q\leq \l\leq Q$ (and thus also $\mu_{q,\l}\neq 0$);
\item $\Theta_{\sigma^+_0}$  with  $\sigma_{q,\l}> \sigma_0, 1\leq q\leq \l\leq Q$ (alternatives with higher variance and possibly zero mean);
\end{itemize}

A detailed study of the Gaussian model is done in Section~\ref{sec:computGaussian} and we report here only some consequences for the above parameter sets.
First, we can check that Assumptions~\ref{cont}~and~\ref{AssumptionIk} both hold:  the complexity assumption holds with $K=2$. The regularity assumption holds with 
$t_{1,q,\l}(\theta)=0$ when $\theta\in \Theta_{\sigma_0}$ or $ \theta\in\Theta_{\sigma^+_0}$ and $t_{2,q,\l}(\theta)=1$ for $\theta\in \Theta_{\sigma_0}$ or
$$
t_{2,q,\l}(\theta)=\left(1+\frac{w_{q,\l}}{1-w_{q,\l}} \frac{\sigma_0}{\sigma_{q,\l}} \exp\left(\frac{\mu_{q,\l}^2}{2(\sigma_0^2-\sigma_{q,\l}^2)}\right)\right)^{-1}
$$
when $\theta=(\pi,w,\sigma_0,\mu,\sigma)\in \Theta_{\sigma^+_0}$. This implies $\alpha_\star(\theta)=0$ both for $\theta\in \Theta_{\sigma_0}$ and $\theta\in \Theta_{\sigma^+_0}$.
An illustration is given in Figure~\ref{fig:QGaussian}.
Now, let $A(\theta)=\pizero(\theta)$ for $\theta\in \Theta_{\sigma_0}$ and 
\begin{equation}\label{equ:Atheta}
A(\theta)=\min_{\substack{(q,\l)\in \{1,\dots,Q\}^2\\ \sigma_{q,\l}\neq \sigma_0}}\left\{ \Q_{\theta}(\theta,t_{2,q,\l}(\theta))\right\}\:\:\: \mbox{ for $\theta\in\Theta_{\sigma^+_0}$.}
\end{equation}
Then the following result holds:

\begin{corollary}\label{corGaussian}
Consider the Gaussian NSBM with parameter set being either $\Theta=\Theta_{\sigma_0}$ or $\Theta=\Theta_{\sigma^+_0}$, the procedure $\varphiVEM$ of Algorithm~\ref{MTVEM} for the VEM estimator $\wh{\theta}$ and clustering $\wh{Z}$ and the optimal procedure $ \varphi^*$ defined by \eqref{def:optimalproc}. 
Let $\theta_0\in \Theta$ and $\alpha\in (0,1)$. Then the following  holds:
\begin{itemize}
\item[(i)] in an asymptotic in $n$ for which the parameter $\theta_0$ is kept fixed (does not depend on $n$) and the VEM estimator $\wh{\theta}$ and clustering $\wh{Z}$ are consistent, that is, such that $\P_{\theta_0}( \|( \wh{\theta}, \wh{Z}) - (\theta_0, Z)\|>\eps)$ converges to $0$ for any $\eps>0$ as $n$ tends to infinity, we have
\begin{align*}
\limsup_n \left\{ \FDR\left(\theta_0,\varphiVEM\right)\right\} \leq\:&  \alpha,\:\:\:\liminf_n \{ \TDR\left(\theta_0, \varphiVEM\right) -\TDR\left(\theta_0, \varphi^*\right)\}\geq 0.
\end{align*}
\item[(ii)]
If $\alpha\in (0,A(\theta_0))$ for the quantity $A(\theta_0)$ defined by \eqref{equ:Atheta}, then there exists a constant $C=C(\theta_0,\alpha,Q)$ and an integer $N=N(\theta_0,\alpha,Q)$ such that if $n\geq N$, 
\begin{align*}
\FDR\left(\theta_0,\varphiVEM\right) \leq\:&  \alpha + C \eps_n  + \P_{\theta_0}\left( \| (\wh{\theta}, \wh{Z}) - (\theta_0, Z)\|> \eps_n \right),
\end{align*}
for any sequence $\eps_n\geq \sqrt{(\log n)/n}$.
\item[(iii)] if in addition $\alpha\in (0,A(\theta_0))\backslash \Lambda$, for some $\Lambda\subset [0,1]$ of Lebesgue measure $0$, we have 
\begin{align*}
\TDR\left(\theta_0, \varphi^*\right)
\leq&\TDR\left(\theta_0, \varphiVEM\right) + C \eps_n  + \P_{\theta_0}\left( \| (\wh{\theta}, \wh{Z}) - (\theta_0, Z)\|> \eps_n \right),
\end{align*}
for any sequence $\eps_n\geq \sqrt{(\log n)/n}$.
\end{itemize}
\end{corollary}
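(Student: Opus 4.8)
The plan is to specialize Theorems~\ref{FDRcontrol} and~\ref{Powercontrol} and Corollary~\ref{maincor} to the two Gaussian parameter sets, so that the proof reduces to (a) recording that the abstract hypotheses hold with explicit constants, (b) replacing the moduli of continuity by linear bounds, and (c) an elementary tuning of the free parameters $x$ and $\eps$. The inputs from Section~\ref{sec:computGaussian}, already announced just above the statement, are: Assumptions~\ref{cont} and~\ref{AssumptionIk} hold, with complexity constant $K=2$; one has $t_{1,q,\l}(\theta)=0$ for all $(q,\l)$ and all $\theta\in\Theta_{\sigma_0}$ or $\theta\in\Theta_{\sigma^+_0}$ (hence $t_1(\theta)=0$ and $\alpha_*(\theta)=0$), while $t_{2,q,\l}(\theta)=1$ on $\Theta_{\sigma_0}$ and $t_{2,q,\l}(\theta)\in(0,1)$ is given by the displayed formula on $\Theta_{\sigma^+_0}$; and the explicit expressions for $\q_0(t,q,\l;\theta',\theta)$ and $\q_1(t,q,\l;\theta',\theta)$ are $C^1$ in $(t,\theta',\theta)$ on the open region where $0<t<\min_{q',\l'}t_{2,q',\l'}(\theta)$ and $(\theta',\theta)$ lies in a neighbourhood of $(\theta_0,\theta_0)$ (here one uses that $\theta_0$ is an interior point of $\Theta$).

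For part~(i): if $\alpha\in(0,\pizero)$, this is precisely Corollary~\ref{maincor}, since $\alpha_*(\theta_0)=0$ and the Assumptions hold. If $\alpha\geq\pizero$, then $\Q_{\theta_0}(\theta_0,\cdot)\leq\pizero\leq\alpha$ by Lemma~\ref{lem:functionQvalue}, so $T_{\theta_0}(\alpha)=1$ and $\varphi^*$ rejects every pair, whence $\FDR(\theta_0,\varphi^*)=\pizero\leq\alpha$ and $\TDR(\theta_0,\varphi^*)=1$; on the consistency event $\wh{\pi}_0$ is within $O(\eps)$ of $\pizero$, so $T_{\wh\theta}(\alpha)$ is $1$ or converges to $t_2(\theta_0)$, and using $\q_\delta(t,q,\l;\theta_0,\theta_0)\to1$ as $t\uparrow t_2(\theta_0)$ (Assumption~\ref{cont}(i)) together with the identity $\TDR(\theta_0,\varphi)=\pione^{-1}\sum_{q,\l}\pi_q\pi_\l w_{q,\l}\,\q_1(T,q,\l;\theta_0,\theta_0)$, valid for an $\ell$-value thresholding procedure at level $T$, one gets $\TDR(\theta_0,\varphiVEM)\to1$ and $\limsup_n\FDR(\theta_0,\varphiVEM)\leq\pizero\leq\alpha$, which settles this regime.

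For parts~(ii) and~(iii): fix $\alpha\in(0,A(\theta_0))$. By Lemma~\ref{lem:functionQvalue}, $t\mapsto\Q_{\theta_0}(\theta_0,t)$ is continuous and strictly increasing from $0$ to $\pizero$ on $[0,t_2(\theta_0)]$; combined with the definition~\eqref{equ:Atheta} of $A(\theta_0)$, the inequality $\alpha<A(\theta_0)\leq\Q_{\theta_0}(\theta_0,t_{2,q,\l}(\theta_0))$ and continuity of $\Q_{\theta_0}(\theta_0,\cdot)$ force $T_{\theta_0}(\alpha)<\min_{q,\l}t_{2,q,\l}(\theta_0)$ (automatic on $\Theta_{\sigma_0}$, where $t_{2,q,\l}\equiv1$). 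Continuity of $T_{\theta_0}$ then permits the choice of a compact interval $\mathcal{K}=\mathcal{K}(\theta_0,\alpha,Q)$ with $\alpha$ in its interior, $\mathcal{K}\subset(0,A(\theta_0))\subset(0,\pizero)$ and $T_{\theta_0}(\mathcal{K})\subset(0,\min_{q,\l}t_{2,q,\l}(\theta_0))$; this fixes $\kappa=\kappa(\theta_0,\alpha)\in(0,1]$ via~\eqref{equ:kappa}. On this range the $\q_\delta$ are $C^1$, so by Remark~\ref{rem:diff} there is $e_0=e_0(\theta_0,\alpha,Q)>0$ with $\mathcal{W}_{\theta_0,\q}(u)\leq C_0 u$ for $u<e_0$ — the only smoothness part~(ii) uses. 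For part~(iii) one moreover has $\mathcal{W}_{T,\q_1}(u)\leq C_0 u$ (since $t\mapsto\q_1(t,q,\l;\theta_0,\theta_0)$ is $C^1$ near $T_{\theta_0}(\alpha)$) and $\mathcal{W}_{\alpha,T}(u)\leq C_0 u$ whenever $T_{\theta_0}$ is differentiable at $\alpha$; as the inverse of the non-decreasing function $\Q_{\theta_0}(\theta_0,\cdot)$, the map $T_{\theta_0}$ is non-decreasing on $(0,\pizero)$, hence differentiable outside a Lebesgue-null set $\Lambda$ (determined by $\theta_0$ alone), which is exactly the exceptional set in the statement.

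It then remains to tune $x$ and $\eps$. Given a sequence $\eps_n\geq\sqrt{(\log n)/n}$, set $\eps=\eps_n$ and $x=x_n:=\sqrt{C'(\log n)/n}$ in Theorems~\ref{FDRcontrol} and~\ref{Powercontrol}; since $m\sim n^2/2$, $\lfloor n/2\rfloor\sim n/2$ and $\log(1/\eps_n)\leq\tfrac12\log n$, the constant $C'=C'(\theta_0,\alpha,Q)$ can be chosen large enough that each exponential remainder (and its TDR analogue) is $\leq\eps_n$ once $n\geq N(\theta_0,\alpha,Q)$; for such $n$ one also has $x_n<\pimin^2\wedge(1-\wmax)$, $x_n<\pimin^2\wedge\wmin$, and $\eps_n<e(\theta_0,\alpha,Q)\wedge e_0$ when $\eps_n$ is small (if $\eps_n$ is not small, the asserted inequalities are trivial after enlarging $C$ so that $C\eps_n\geq1$). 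Plugging into Theorem~\ref{FDRcontrol}, using $16\kappa^{-1}(\mathcal{W}_{\theta_0,\q}(\eps_n)+3Q^2\eps_n)\leq C_1\eps_n$ and $x_n\leq\sqrt{C'}\,\eps_n$, gives part~(ii); plugging into Theorem~\ref{Powercontrol}, dividing by $\pione>0$, and bounding the composite modulus $\mathcal{W}_{T,\q_1}\circ\mathcal{W}_{\alpha,T}(8\kappa^{-1}(\mathcal{W}_{\theta_0,\q}(\eps_n)+3Q^2\eps_n))\leq C_2\eps_n$ together with the remaining modulus terms, gives part~(iii). The main obstacle is the step for~(ii)/(iii): identifying the threshold $A(\theta_0)$ and showing that $\alpha<A(\theta_0)$ forces $T_{\theta_0}(\alpha)$ to lie strictly below every saturation level $t_{2,q,\l}(\theta_0)$, which is precisely what makes $\q_\delta$, $\q_1$ and $T_{\theta_0}$ differentiable there and so converts the abstract moduli into genuine $O(\eps_n)$ rates; the Lebesgue-null exceptional set for $T_{\theta_0}$, the reduction to the consistency event, and the calibration of $x_n$ are routine, while the real-analyticity of the Gaussian $\q_\delta$ below saturation and the exact form of $t_{2,q,\l}$ are handled in Section~\ref{sec:computGaussian}.
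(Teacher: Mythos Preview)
Your proposal is correct and follows essentially the same route as the paper: verify Assumptions~\ref{cont} and~\ref{AssumptionIk} in the Gaussian case (with $K=2$ and $\alpha_*=0$), use the explicit formulas of Section~\ref{sec:computGaussian} to linearize the moduli $\mathcal{W}_{\theta_0,\q}$, $\mathcal{W}_{T,\q_1}$, $\mathcal{W}_{\alpha,T}$, and then plug into Theorems~\ref{FDRcontrol}--\ref{Powercontrol} and Corollary~\ref{maincor}, with $\Lambda$ being the non-differentiability set of the monotone map $T_{\theta_0}$. Your write-up is in fact more explicit than the paper's in two respects: you spell out the tuning $x_n\asymp\sqrt{(\log n)/n}$ needed to absorb the exponential remainders into $C\eps_n$, and you treat the regime $\alpha\geq\pizero$ in part~(i) separately (the paper's one-line appeal to Corollary~\ref{maincor} technically only covers $\alpha\in(\alpha_*,\pizero)$); your identification of $A(\theta_0)$ as the threshold guaranteeing $T_{\theta_0}(\alpha)<\min_{q,\l}t_{2,q,\l}(\theta_0)$ --- and hence smoothness of the $\q_\delta$ on $T_{\theta_0}(\mathcal{K})$ --- is exactly the mechanism the paper relies on via Section~\ref{sec:computMgaussian}.
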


Point (i) is a direct consequence of Corollary~\ref{maincor} above. Point (ii) is a consequence of Theorem~\ref{FDRcontrol} and of the fact that the modulus $\mathcal{W}_{\theta_0,\q}(u)$ is smaller than some constants (depending on $\theta_0,\alpha,Q$) times $u$ in the Gaussian case with a parameter set $\Theta=\Theta_{\sigma_0}$ or $\Theta=\Theta_{\sigma^+_0}$. The latter comes from Section~\ref{sec:computGaussian} (or more precisely Section~\ref{sec:computMgaussian} therein). 
Point (iii) is a consequence of Theorem~\ref{Powercontrol} and of the fact that  $t\mapsto \q_1(t,q,\l; \theta_0,\theta_0)$ is differentiable in $t=T_{\theta_0}(\alpha)$ when $\alpha<A(\theta_0)$, as proved in Section~\ref{sec:studyq}. Also, we use that $T_{\theta_0}$ is continuous increasing so is almost everywhere differentiable on $(0,\pizero(\theta_0))$. It is thus differentiable in $\alpha$, up to remove a subset of Lebesgue measure equal to zero. 

Let us provide some rationale behind Corollary~\ref{corGaussian}: point (i) means that, when inferring consistently the parameter and the clustering, the procedure $\varphiVEM$ consistently mimics the FDR/TDR of the optimal procedure. When the quality of the  parameter/clustering estimation is additionally obtained with a rate, we can deduce a convergence rate on the FDR/TDR, by tuning the rate $\eps_n$ into (ii) and (iii) (the final rate being in any case not faster than $\sqrt{(\log n)/n}$).

\begin{remark}
Dealing with a Gaussian parameter set $\Theta_{\sigma^-_0}$  with $\sigma_{q,\l}< \sigma_0, 1\leq q\leq \l\leq Q$ (alternatives with smaller variance) is also possible up to reduce the $\alpha$ range, see Section~\ref{sec:computGaussian} for more details.
\end{remark}

\begin{figure}[h!]
\begin{center}
\begin{tabular}{cc}
\vspace{-0.5cm}
    Case $1$ &Case $2$\\
\vspace{-0.5cm}
\includegraphics[scale=0.5]{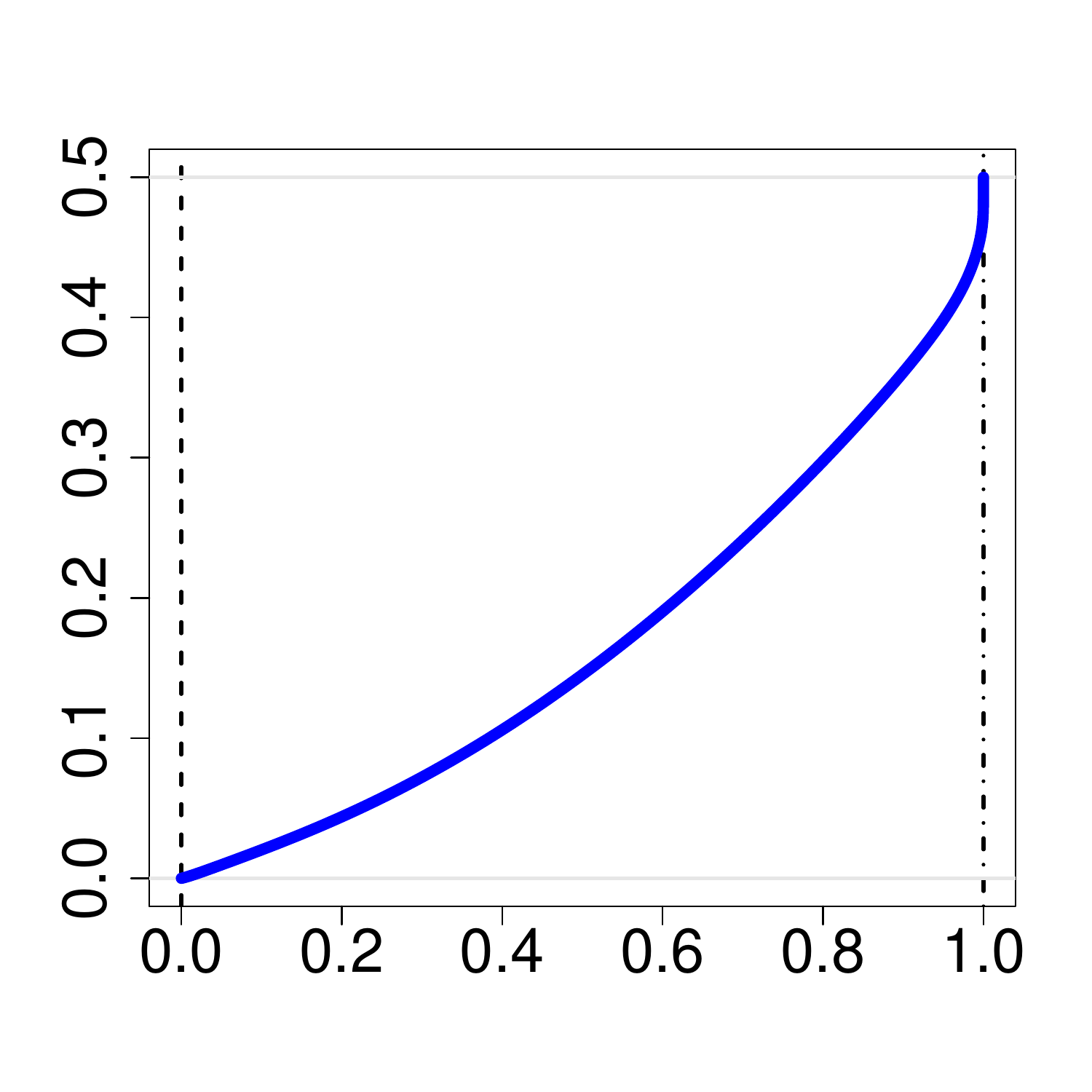}&\includegraphics[scale=0.5]{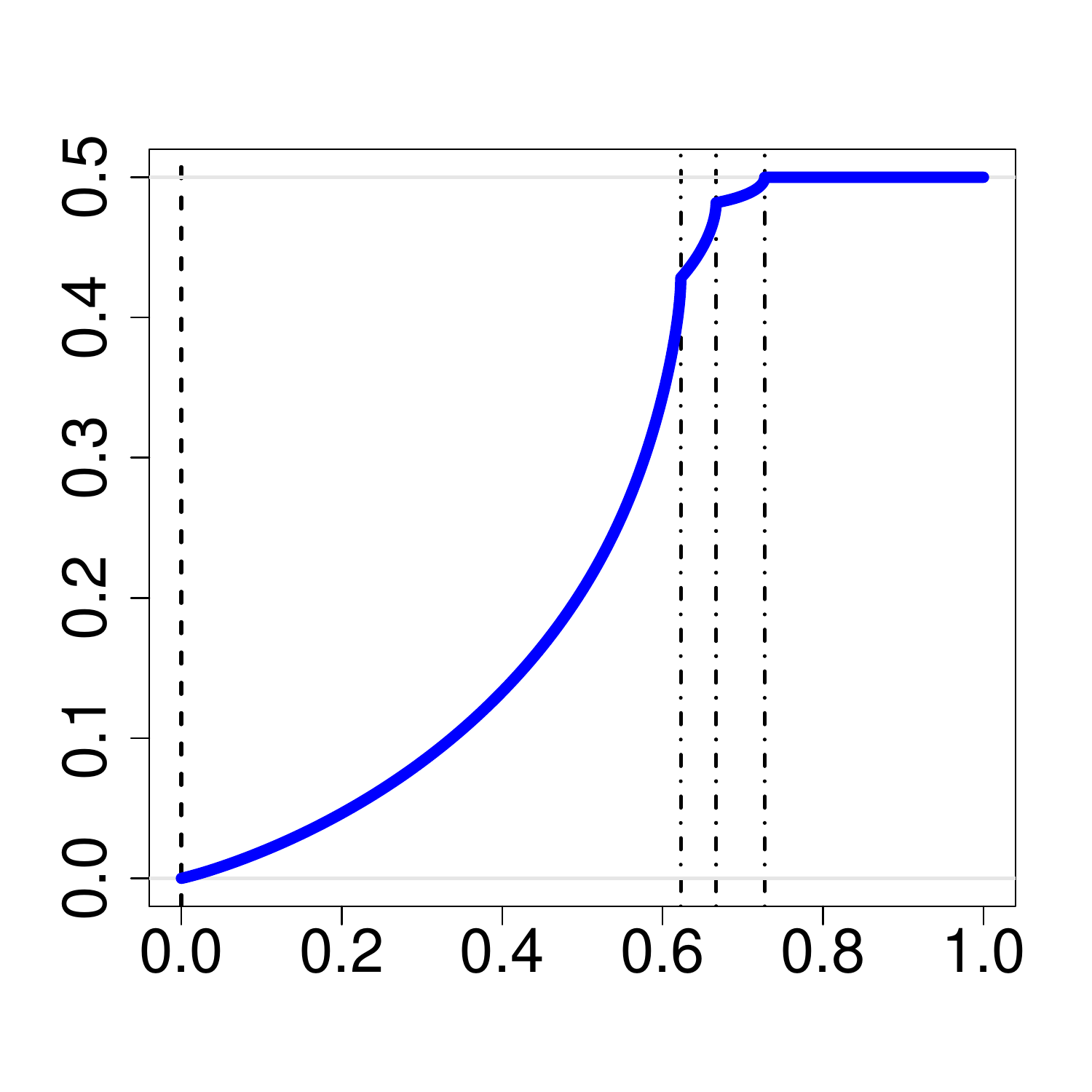}\\
\vspace{-0.5cm}
\end{tabular}
\end{center}
\caption{Plot of $t\mapsto \Q_{\theta}(\theta,t)$ defined by \eqref{equ:Qcond} in the Gaussian case, for $2$ different values of the parameter $\theta$. In each case, the vertical dashed (resp. dashed-dotted) lines correspond to $\{t_{1,q,\l}\}_{q,\l}$ (resp.  $\{t_{2,q,\l}\}_{q,\l}$). In all cases we have $\pi_{q,\l}=0.5$ for all $q,\l$. $Q=2$. $w=(0.4,0.5,0.5,0.6)$. $\sigma_0=(1,1,1,1)$. For case $1$: $\mu=(1,-2,-2,4)$, $\sigma=\sigma_0$. For case $2$: $\mu=(0,0,0,0)$, $\sigma=(1.1,2,2,4)$. 
 \label{fig:QGaussian}}
\end{figure}

\section{Numerical experiments}\label{sec:simu}

In this section, we explore the numerical performances of the new procedure $\varphiVEM$ and compare it to other standard procedures controlling the FDR.

\subsection{Practical issues on the VEM algorithm}\label{subsec:vem_icl}

First, the implementation of the VEM algorithm raises two practical issues. The first is  the choice of the convergence criterion for the algorithm. We may monitor  the value of $J(\theta;\tau;\theta')$ defined in \eqref{def:critJ} and stop the algorithm when its value remains relatively stable. The second and more difficult issue is  initialization. It is well known that the quality of the solution of any EM-type algorithm  heavily  depends on the good the choice of initial value. 
{
For this, we apply the standard $k$-means algorithm to cluster the rows of the (symmetric) observation matrix $(X_{i,j})_{i,j}$ yielding  initial values for $\tau$, which in turn can be used to compute a first parameter value of $\theta$ by using a M-step (or a variation thereof).
In addition, we follow the standard approach of running the  algorithm several times with different initializations and select the best run afterwards.
}

Another considerable problem in practice is the   selection of  the optimal number of latent groups $Q$ in the NSBM. 
Here we use the classical integrated classification likelihood (ICL) approach \citep{Biernacki2000}, which can be interpreted as the penalized  observed likelihood criterion, where the penalty is  the sum of the traditional BIC penalty and of the entropy of the latent variable distribution. The entropy is large when  the uncertainty of the underlying clustering  is high, so that  the quality of the obtained clustering is taken into account in the model selection procedure. 
More precisely, the ICL criterion is given in our model  by
\begin{align}\label{icl}
\mathrm{ICL}(Q) 
&= \tilde\E_{\hat \theta^{[Q]},\hat\tau^{[Q]}}[\log \mathcal{L}(X,A,Z;\hat \theta^{[Q]})\:|\:X] + \mathrm{pen_{BIC}}(Q),
\end{align}
where $\hat \theta^{[Q]}$ and $\tau^{[Q]}$ are the output of the VEM algorithm with $Q$ groups, and
  $\mathrm{pen_{BIC}}(Q)$ denotes the BIC penalty, which  is (roughly) the number of model parameters multiplied with the logarithm of the number of observations. In the NSBM, the parameter $\theta$ splits into 
two parts: for the group proportion vector $\pi$, there are $n$ observations corresponding to the nodes, while for the other parameters $w,\nu_0,\nu$ there are $m$ observations corresponding to  the observed edges, which leads to 
\begin{align*}
\mathrm{pen_{BIC}}(Q)
&= - (Q-1)\log n - \left((1+d_1)\frac{Q(Q+1)}2+  d_0\right)\log m.
\end{align*}
Now,  for some given maximal number $Q_{\max}$ of groups, the number of latent groups $\widehat Q$ chosen by the ICL criterion is given by
$$\widehat Q = \arg\max_{1\leq Q\leq Q_{\max}}\{\mathrm{ICL}(Q)\}.$$

\subsection{Procedures}

{We use the new procedure $\varphiVEM$ with the adjustment described in the previous section ($Q_{\max}=3$).}
As benchmarks, we consider the BH procedure (BH) at level $\alpha$ described in Section \ref{subsec:BH} (with the true sigma $\sigma_0$) and to the so-called adaptive BH procedure (ABH) that corresponds to the BH procedure taken at level $\alpha/ \pizerohat $.
The rationale is that BH controls the FDR at level $\pizero\alpha$ instead of $\alpha$, see \eqref{BHcontrol}. Hence, this correction improves BH by making the achieved FDR closer to $\alpha$, see \cite{BKY2006}. 
We consider two versions of ABH based on the two following estimators:
\begin{enumerate}
\item ABH-Storey: $ \pizerohat = \frac{1 + \sum_{(i,j) \in \mathcal{A}} \ind{p_{i, j}(X) >
      0.5}}{m \times 0.5}$ proposed in \cite{Storey2002} (parameter $\lambda=0.5$), and using the estimator of \cite{SS1982};
\item ABH-VEM: $ \pizerohat=\pizero(\wh{\theta})$, see \eqref{equpi0pi1}, where $\wh{\theta}$ is the VEM estimator coming from Algorithm~\ref{MTVEM}. 
\end{enumerate}

\subsection{Scenario 1: case of an NSBM}

The first setting is as follows: we consider the Gaussian NSBM with $n=100$ nodes, $Q=2$ latent groups and  equal group probabilities
  $\pi_q = 1/Q$ for $q \in \{1, \dots, Q\}$. 
To evaluate the effect of the expected proportion of non-connected
  vertices $\pizero$ defined in \eqref{equpi0pi1}, the parameter $w$ is of
  the form :
  $$w=c_w \times \left(\begin{matrix}
  0.8 & 0.2\\
  0.2 & 0.8 \\
  \end{matrix}
    \right),
  $$ with $c_w \in \{1, 0.5, 0.2\}$ such that the expected proportion of
    non connected edges $\pizero$ equals respectively $0.5, 0.75$ and $ 0.9$. 
  The variances $\sigma_0$ and $\sigma_{q,\l}$,  $1\leq q,\l\leq Q$,
  are all set to $1$. For the alternative means:  we consider both the cases of equal means 
  $\mu_{q,\l}$, $1\leq q,\l\leq Q$, all equal to $0.5$, $1$ or $2$, and the case of different means $\mu=(2,1,1,-3)$ (strong signal when connection probability is high) or $\mu=(1,3,3, -1)$ (strong signal when connection probability is low).
  
For each parameter,  the FDR (mean proportion of the discovered edges that are not in the graph) and the TDR (mean proportion of the edges of the graph that are discovered) of the different procedures $\varphiVEM$, BH, ABH-Storey, ABH-VEM, are estimated with $500$ replications, for a targeted FDR level $\alpha$ taken in the range $\{0.005, 0.025,
0.05, 0.1, 0.15, 0.25\}$. Hence, displaying for each $\alpha$ the point $(\FDR,\TDR)$ provides a ROC-type curve, showing simultaneously if the FDR control is correct and which of the procedures is the most powerful. 
Figure~\ref{fig:sbm_Q_2_w} displays the result for $\mu=(2,2,2,2)$ and $\pizero \in\{0.75,0.9\}$, while Figure~\ref{fig:sbm_Q_2_mu} displays the result for $\pizero= 0.5$ and $\mu=(0.5,0.5,0.5,0.5)$,  $\mu=(2,2,2,2)$, $\mu=(2,1,1,-3)$ and $\mu=(1,3,3, -1)$. 

In all the considered configurations, while $\varphiVEM$ has an FDR close to the target level $\alpha$, it clearly outperforms the other procedures in terms of TDR. 
This is in accordance with the theoretical result, see Corollary~\ref{corGaussian}.
 Markedly, the TDR enhancement can be particularly important. For instance, when  $\mu=(2,2,2,2)$, $\pi_0=0.5$, $\alpha=0.1$, the TDR for ABH-Storey is $\approx 50\%$ while the one of the new procedure is above $85\%$. This supports that coordinate-wise decisions (like (A)BH) are suboptimal and that incorporating the clustering information is essential for inferring the graph.

As a side result, we note that the estimator $\pizerohat$ coming from the VEM algorithm improves the Storey estimator, as ABH-VEM has an FDR much closer to $\alpha$ than ABH-Storey.

\begin{figure}[h!]
\begin{center}
\begin{tabular}{cc}
\vspace{-0.5cm}
$ \pizero=0.75$ & $\pizero= 0.9 $ \\
\vspace{-0.5cm}
\includegraphics[scale=0.4]{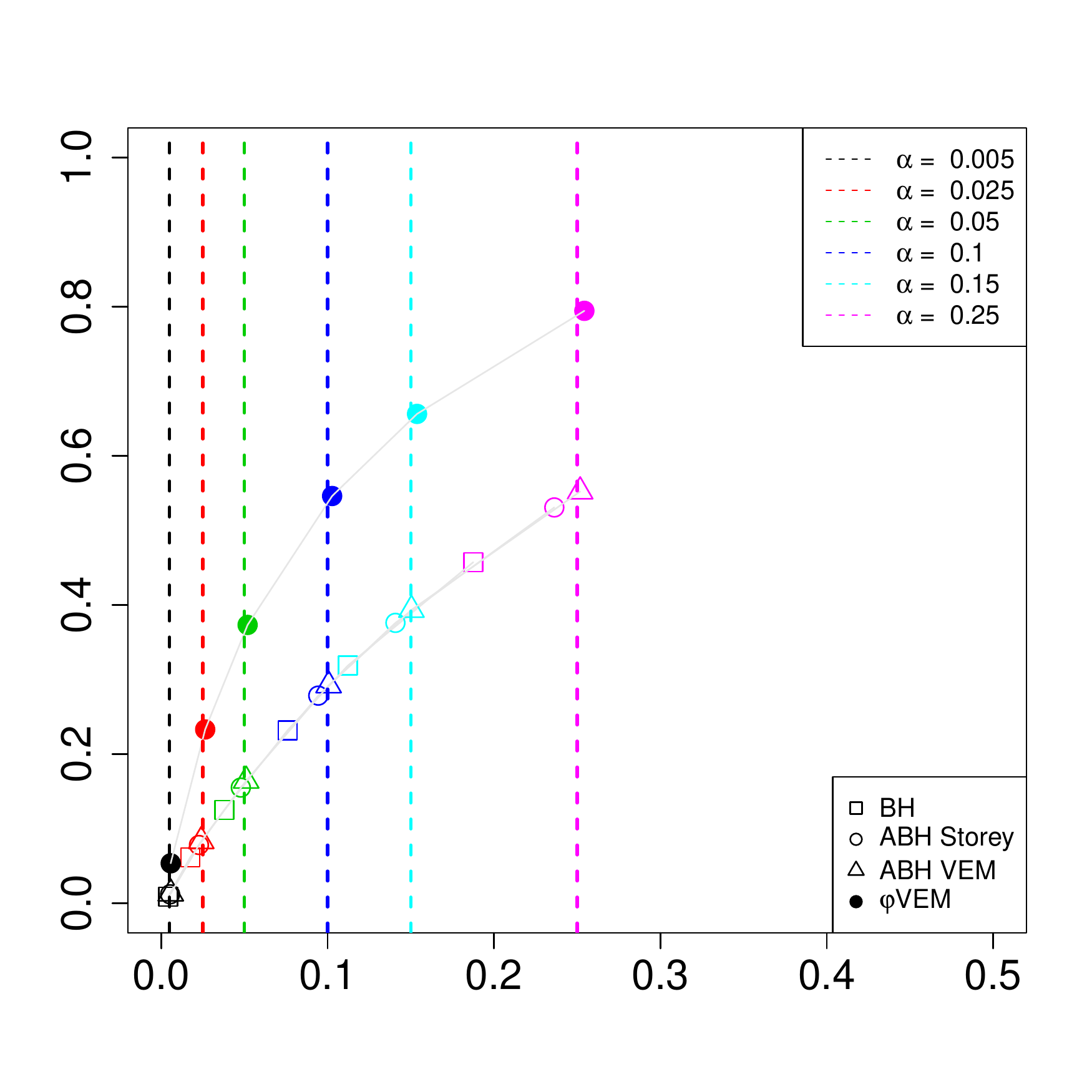}&\includegraphics[scale=0.4]{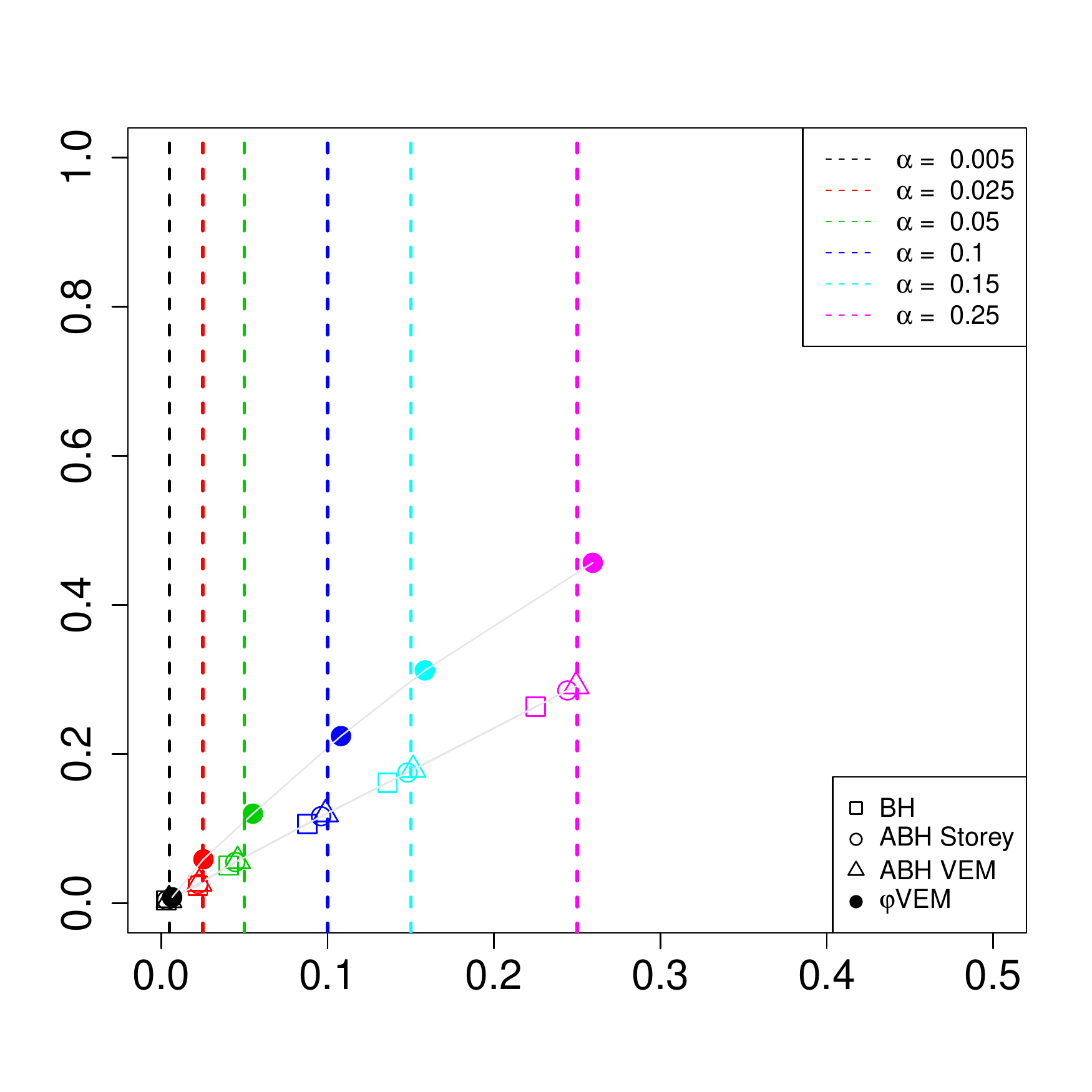}\\
\end{tabular}
\end{center}
\caption{$(\FDR,\TDR)$ for the $4$ different procedures $\varphiVEM$, BH, ABH-Storey, ABH-VEM taken at level $\alpha \in
  \{0.005, 0.025, 0.05, 0.1, 0.15, 0.25 \}$. The vertical dashed lines display the possible values of $\alpha$.
   Scenario 1 with $\sigma_0=1$,
  $\sigma_{q,\l}=1$, $\mu_{q,\l}=2$, $ q,\l   \in \{1,\dots,Q\} $, and $\pi_0\in\{0.75,0.9\}$.
 \label{fig:sbm_Q_2_w}}
\end{figure}

\begin{figure}[h!]
\begin{center}
\begin{tabular}{cc}
\vspace{-0.5cm}
$\mu=(0.5,0.5,0.5,0.5)$ & $\mu=(2,2,2,2)$\\
\vspace{-0.5cm}
\includegraphics[scale=0.4]{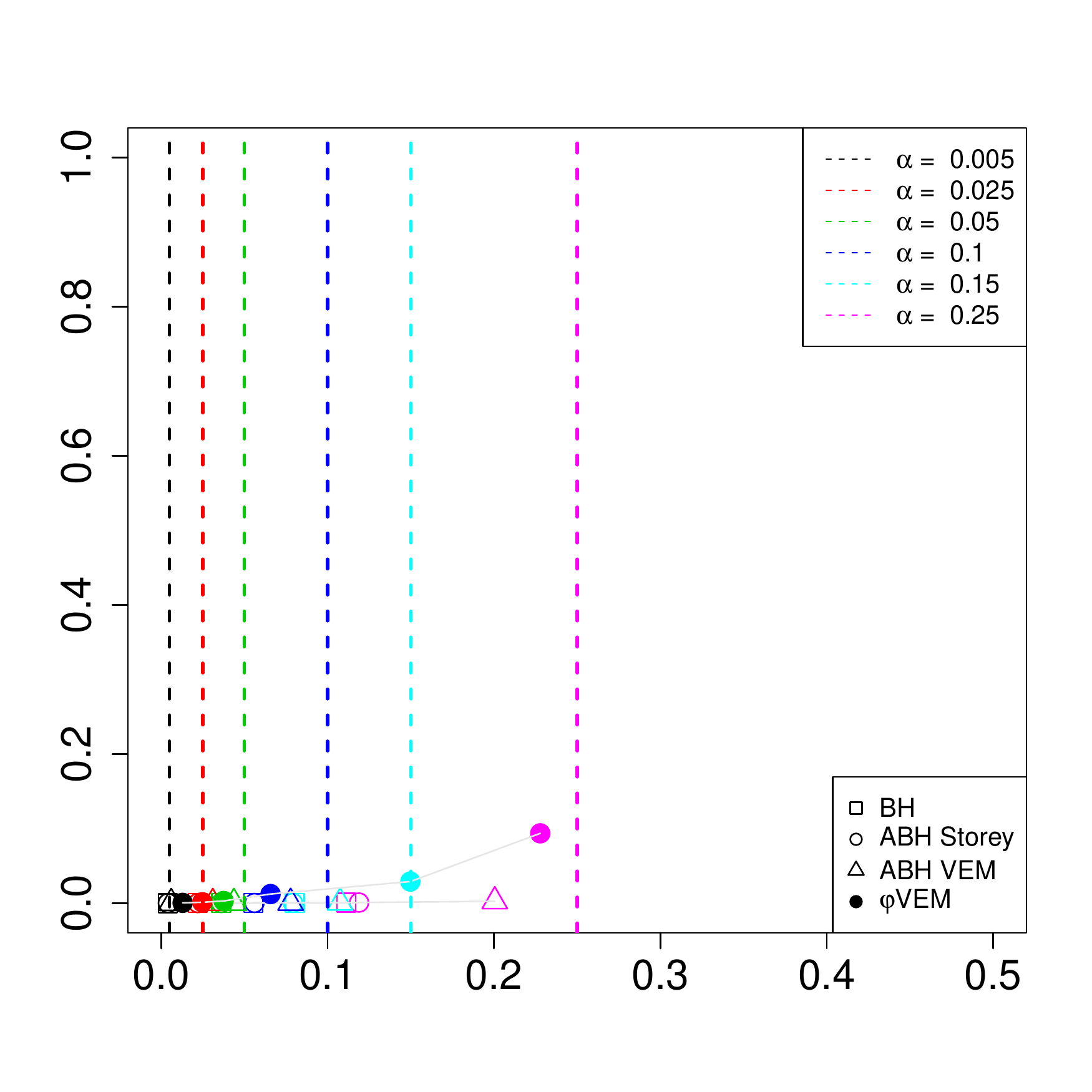}&\includegraphics[scale=0.4]{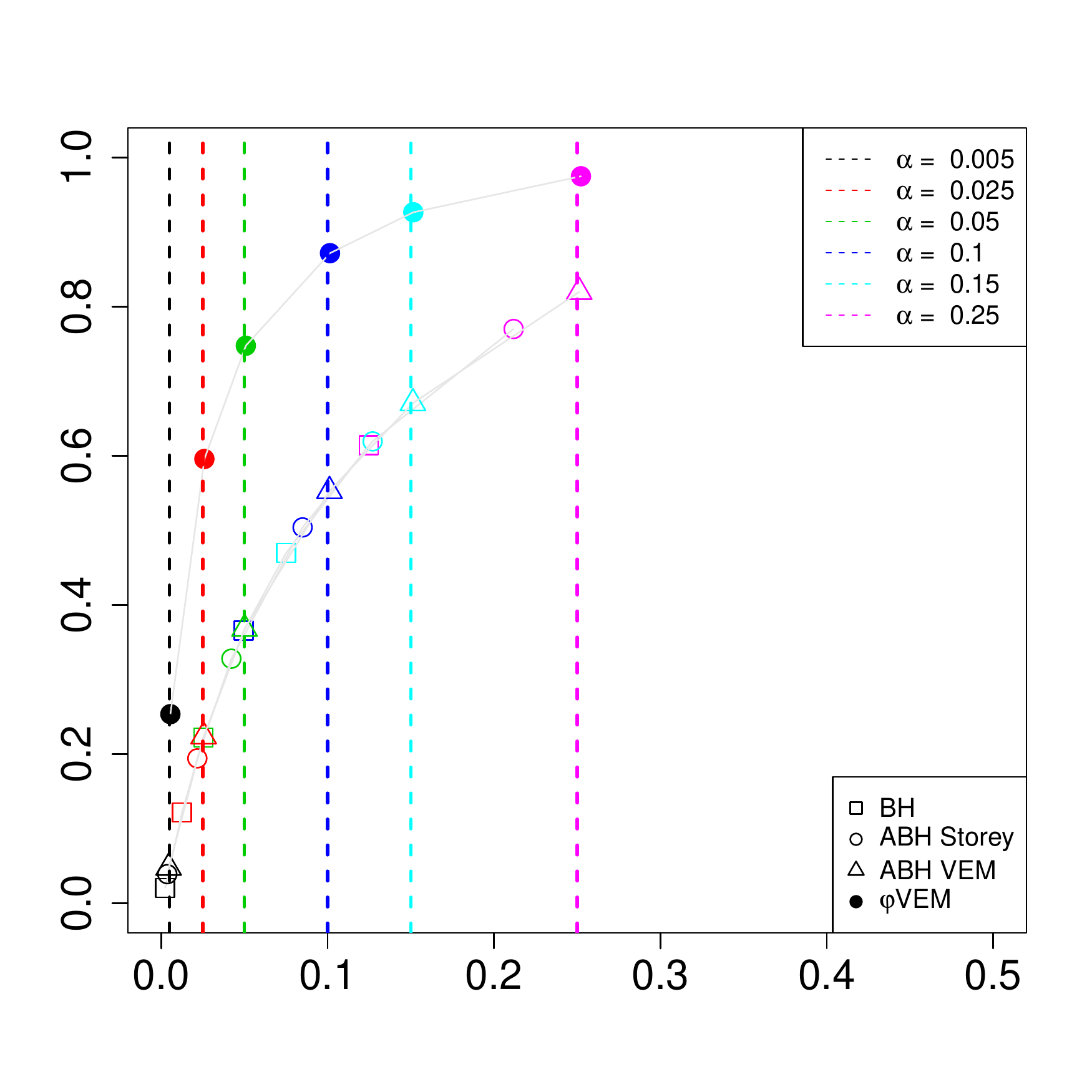}\\
\vspace{-0.5cm}
$\mu=(2,1,1,-3)$& $\mu=(1,3,3, -1)$\\
\vspace{-0.5cm}
\includegraphics[scale=0.4]{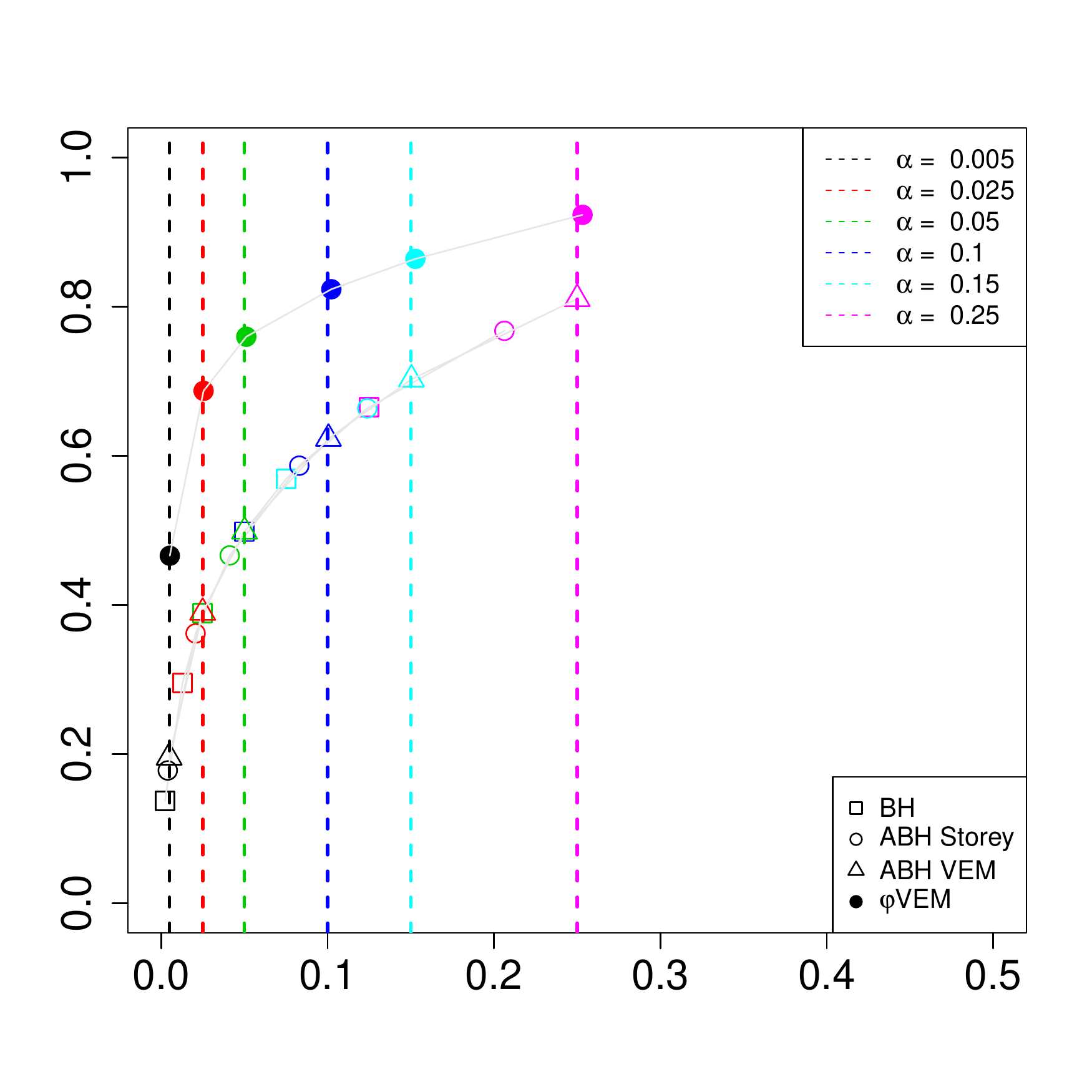}&\includegraphics[scale=0.4]{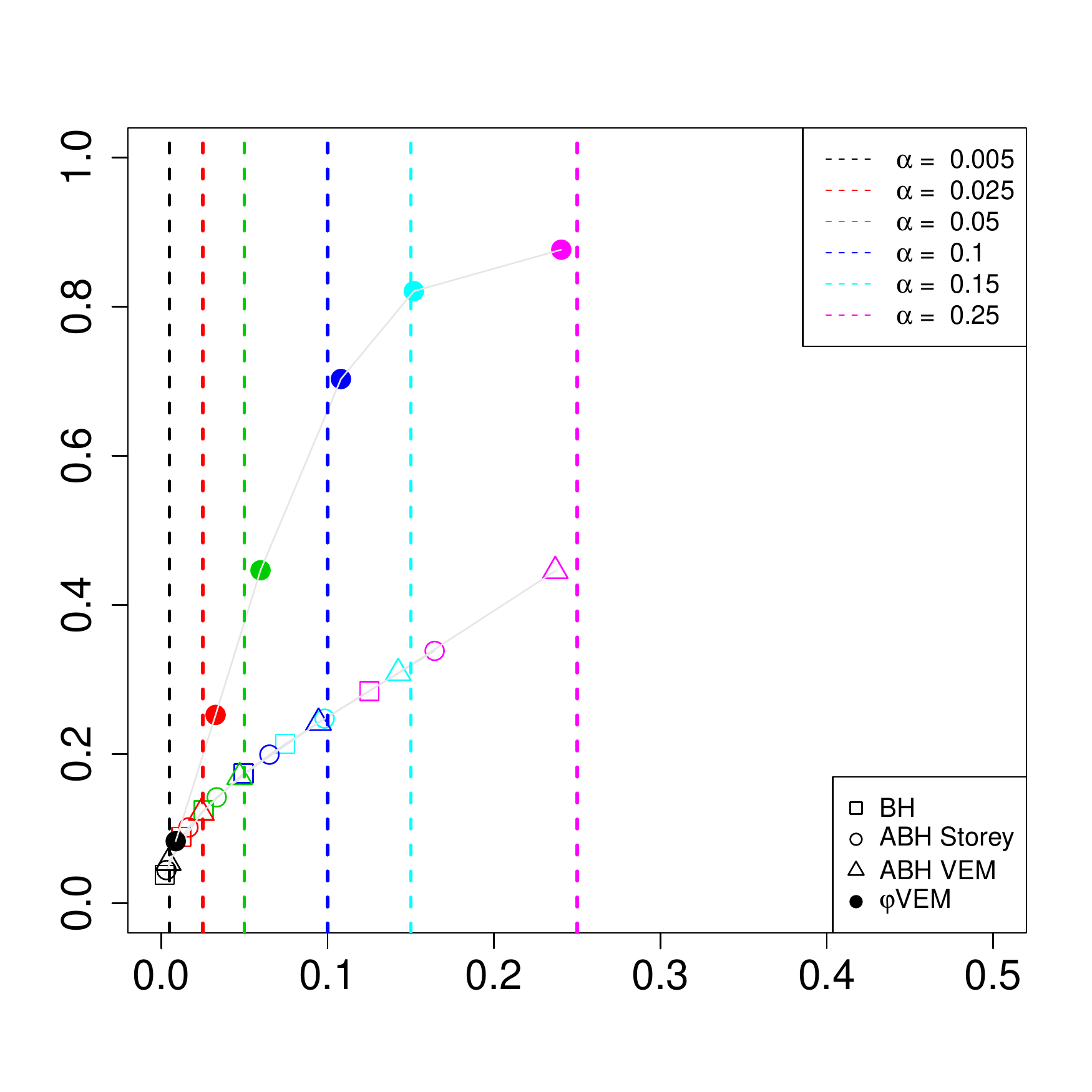}
\end{tabular}
\end{center}
\caption{Same as Figure~\ref{fig:sbm_Q_2_w} for $\pizero= 0.5$ and different $\mu$.
 \label{fig:sbm_Q_2_mu}}
\end{figure}

\subsection{Scenario 2: case of a fixed graphs}\label{sec:scenario2}

In the result of the previous section, one could object that since the new procedure is tailored to the NSBM, it is not surprising that it outperforms procedures that are valid more generally, like BH. It is thus particularly important to complete the study by exploring the robustness of the new procedure, that is,  its behavior  outside the NSBM.

For this, let us consider a {\it deterministic} graph $A =(A_{i,j})_{(i,j)\in\mathcal{A}}$ and generate independently $X_{i,j} \sim \mathcal{N}(0,1)$ when $A_{i,j}=0$ and $X_{i,j} \sim \mathcal{N}(2,1)$ when $A_{i,j}=1$. Here, we underline that $A$ does not change when generating the data $X$ and does not contain any group information, so is different from an NSBM. 
Nevertheless, we can still use the new procedure $\varphiVEM$, that will fit an NSBM on this graph and make a graph inference accordingly.
We consider the two structures displayed in Figure~\ref{fig:structurescenario2} for the graph $A$.

\begin{figure}[h]
\begin{center}
\begin{tabular}{cc}
Star & \hspace{1.5cm}Spider\\
&\\
\begin{tikzpicture}[scale=0.85]
 \tikzstyle{quadri}=[circle,draw,text=black,thick]
 \tikzstyle{estun}=[-,>=latex,very thick]
 \node[quadri] (1) at (0,0) {$1$};
 \node[quadri] (2) at (0,2) {$2$};
  \node[quadri] (3) at (-1.4,1.4) {$3$};
 \node[quadri] (4) at (-2,0) {$4$};
 \node[quadri] (5) at (-1.4,-1.4) {$5$};
 \node[quadri] (6) at (0,-2) {$6$};
 \node[quadri] (7) at (1.4,-1.4) {$7$}; 
 \node[quadri] (8) at (2,0) {$8$}; 
 \node[quadri] (9) at (1.4,1.4) {$9$};  
  \draw[estun] (1)--(2);
  \draw[estun] (1)--(3);
 \draw[estun] (1)--(4);
 \draw[estun] (1)--(5);
 \draw[estun] (1)--(6);
 \draw[estun] (1)--(7);
 \draw[estun] (1)--(8);
 \draw[estun] (1)--(9);
 \end{tikzpicture}
&\hspace{1.5cm}
\begin{tikzpicture}[scale=0.85]
 \tikzstyle{quadri}=[circle,draw,text=black,thick]
 \tikzstyle{estun}=[-,>=latex,very thick]
 \node[quadri] (1) at (0,0) {$1$};
 \node[quadri] (2) at (0,2) {$2$};
  \node[quadri] (3) at (-1.4,1.4) {$3$};
 \node[quadri] (4) at (-2,0) {$4$};
 \node[quadri] (5) at (-1.4,-1.4) {$5$};
 \node[quadri] (6) at (0,-2) {$6$};
 \node[quadri] (7) at (1.4,-1.4) {$7$}; 
 \node[quadri] (8) at (2,0) {$8$}; 
 \node[quadri] (9) at (1.4,1.4) {$9$};  
  \draw[estun] (1)--(2);
  \draw[estun] (1)--(3);
 \draw[estun] (1)--(4);
 \draw[estun] (1)--(5);
 \draw[estun] (1)--(6);
 \draw[estun] (1)--(7);
 \draw[estun] (1)--(8);
 \draw[estun] (1)--(9);
  \draw[estun] (2)--(3);
 \draw[estun] (3)--(4);
 \draw[estun] (4)--(5);
 \draw[estun] (5)--(6);
 \draw[estun] (6)--(7);
 \draw[estun] (7)--(8);
 \draw[estun] (8)--(9);
 \draw[estun] (9)--(2);
 \end{tikzpicture}
\end{tabular}
\end{center}
\caption{Two graph structures used in scenario 2.
 \label{fig:structurescenario2}}
\end{figure}
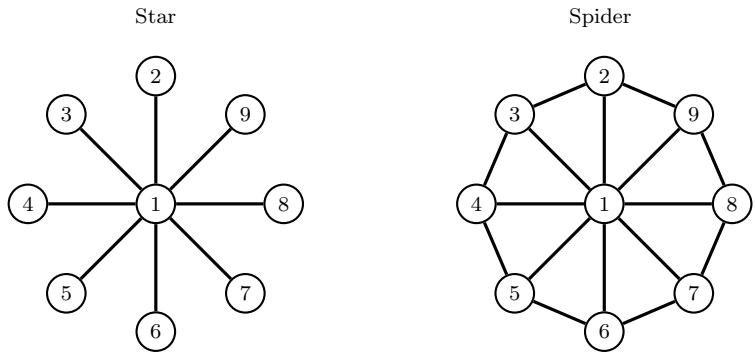

The results are displayed in Figure~\ref{fig:scenario2} for $n=100$ nodes. 
For the star structure, the procedure $\varphiVEM$ is particularly
powerful even if the graph is very sparse (the proportion of connected
nodes equals $2/n=0.02$) and the improvement over BH-like procedures is extreme.
It turns out that fitting an NSBM model to a star structure is particularly beneficial here. 
The VEM-ICL algorithm find $\wh{Q}=2$ groups (with high probability), the center of the star 
 forming the first group and the other nodes the second group.
Since the connection probability between the two groups is $\wh{w}_{1,2}\approx 1$,
the corresponding $\ell$-values are very small and the star can be efficiently recovered.
For the spider structure, 
the procedure $\varphiVEM$ also provides a much higher TDR while still controlling the FDR.
However, the TDR improvement is less extreme in that case because the NSBM fitted is the same as for the Star structure  (with high probability): 
hence, while the edges of the star are still correctly recovered, detecting the other edges is more difficult.

\begin{figure}[h!]
\begin{center}
\begin{tabular}{cc}
\vspace{-0.5cm}
Star  & Spider\\
\vspace{-0.5cm}
\includegraphics[scale=0.4]{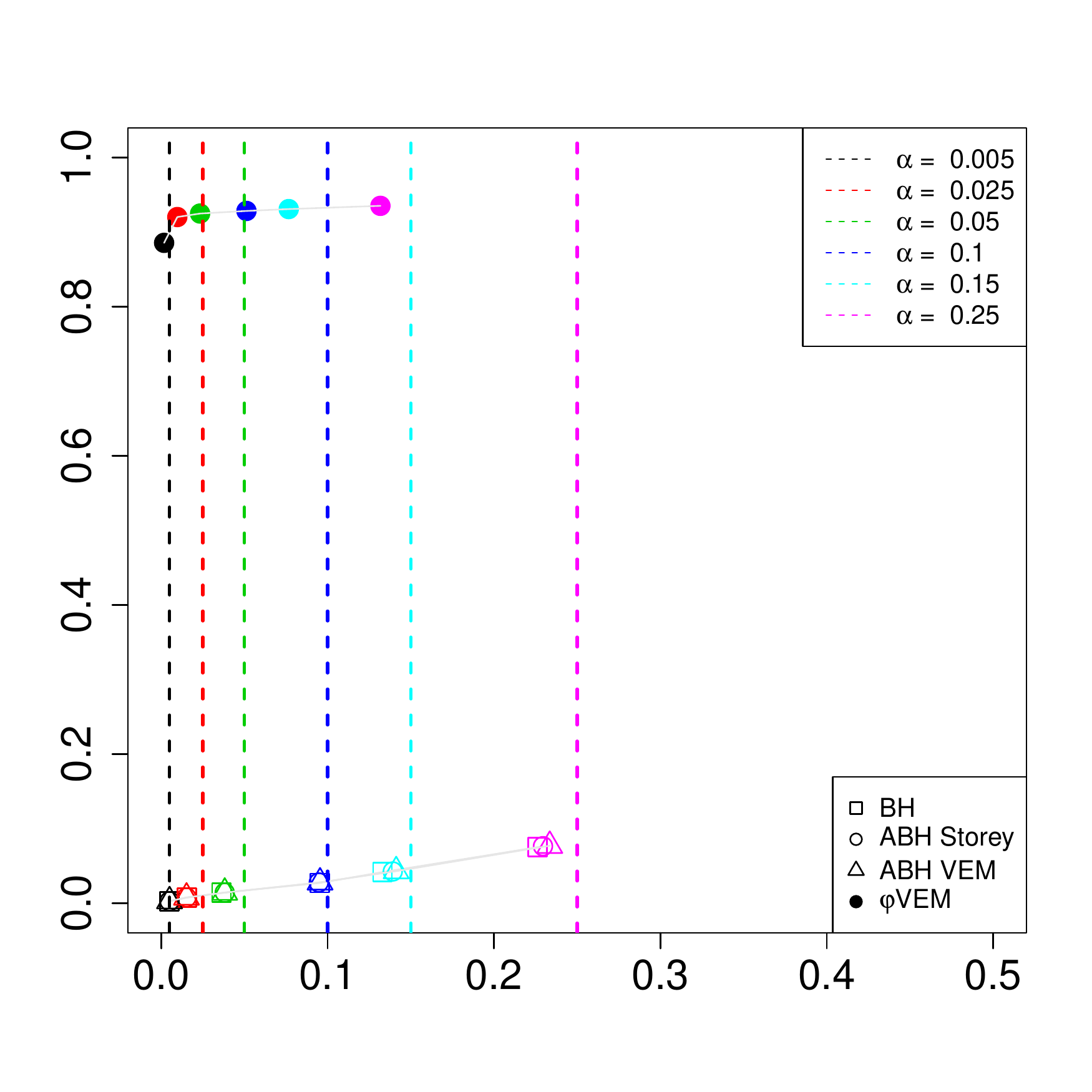}& \includegraphics[scale=0.4]{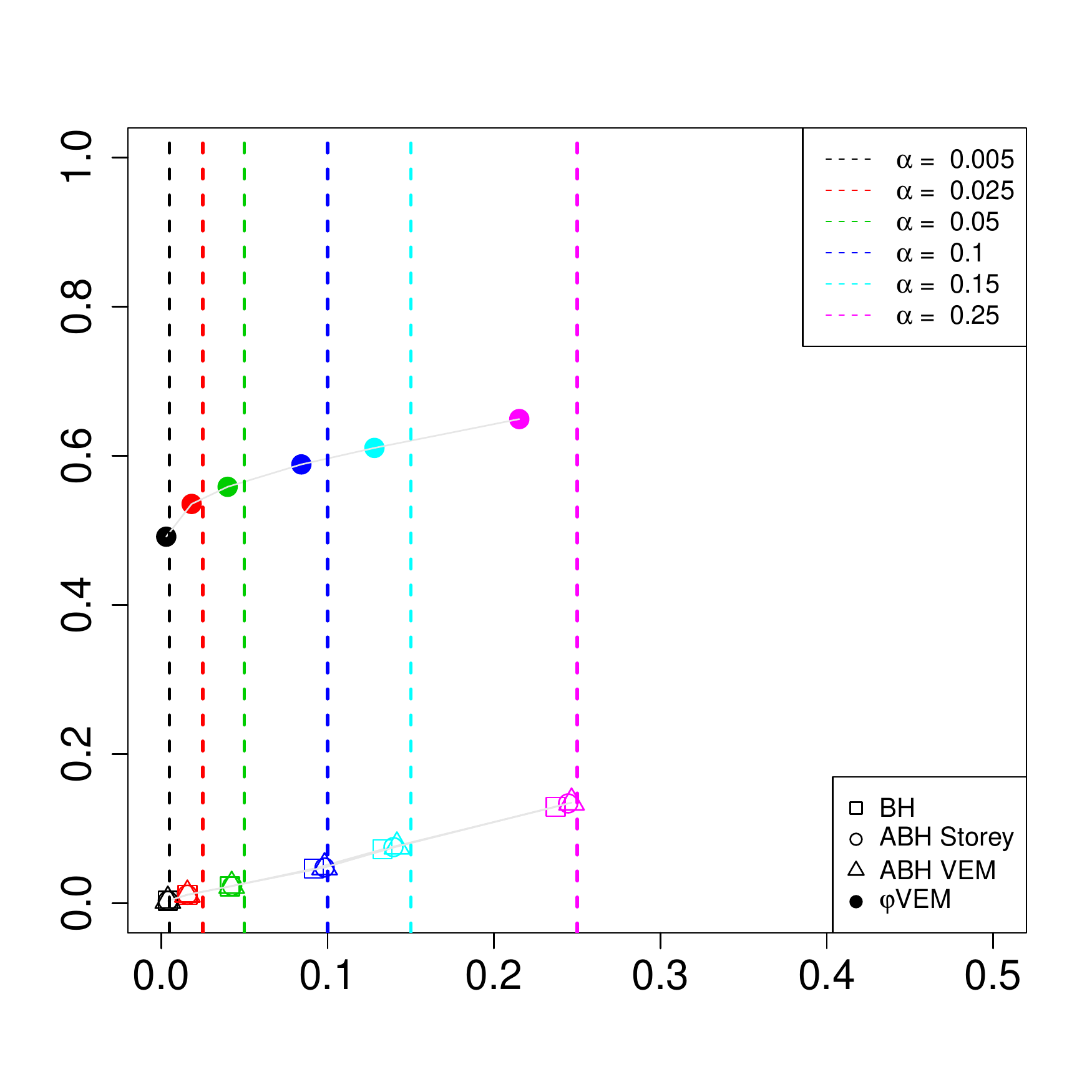}
\vspace{-0.5cm}
\end{tabular}
\end{center}
\caption{Same as Figure~\ref{fig:sbm_Q_2_w}, in scenario 2.
 \label{fig:scenario2}}
\end{figure}

\subsection{Scenario 3: case of non-NSBM random graphs}

To pursue our robustness investigation, we propose to consider the same data-generating process as the previous section, except that $A$ is a priori randomly generated, according to some non-NSBM distribution. We consider three ways to generate $A$ ($n=100$ nodes each time):

 \begin{enumerate}
\item Erdos-R\`enyi $G(n,M)$ model (without replacement). The graph
  has $M$ edges being chosen uniformly
  randomly from the set of all possible edges.  We choose $M=0.2 \times n(n-1)/2$ to obtain a graph whose proportion of
 connected nodes is $0.2$.  
    
\item Bipartite random graph. A bipartite graph is a graph whose
  nodes can be divided into two disjoint sets such that every edge
  connects a node in one set to a node in the other set. We choose
  two sets with the same number of nodes $n/2$. To avoid a too dense graph, nodes between the two sets are
  not always connected here, but only with some probability $p$. Here $p\in\{0.2,0.5\}$.

\item  Preferential attachment model (or the so-called Barab\'asi-Albert model).  A graph of $n$ nodes is built sequentially from a root
  graph by following some growing process, attaching new nodes
  each with a given number ($n/10$ here) of edges that are preferentially attached to existing nodes
  with high degree. Here, we iterate this process until we obtain a graph with $n=100$ nodes and the root graph is generated as an Erdos-R\'enyi graph $G(n_0,p_0)$ (with replacement) with $n_0=n/5$ nodes and a
  probability of connections $p_0=0.5$. This gives a
  graph $A$ with about $20\%$ of edges. 

\end{enumerate}

The results are displayed in Figure~\ref{fig:scenario3}.
The procedure behaves qualitatively as in scenario 1 (FDR control and TDR enhancement), with slightly less improvement in the cases Erdos-R\`enyi. This is well expected because the latter has typically no structure and the VEM-ICL algorithm find no group ($\wh{Q}=1$, with high probability). Nevertheless, even in this case, $\varphiVEM$ provides improvement over (A)BH, because it learns the parameters of the alternative distribution and thus, the optimal decision. This is in line with the findings of the seminal work of \cite{SC2007} in the area of optimal multiple testing for mixture models.
In the bipartite model, the power improvement is better in case $p=0.5$ than $p=0.2$, because the structure in two
sets of nodes is stronger and thus can be easily learned by the algorithm. 
In the preferential attachment model, even if this model is well known to be not of the SBM type, 
the algorithm is still able to learn some part of the structure to increase power.
In that case, the procedure fits an SBM by selecting most of the
time $\wh{Q}=2$ groups with nodes of high degree in one group and nodes with
lower degree in the other one.

\begin{figure}[h!]
\begin{center}
\begin{tabular}{cc}
\vspace{-0.5cm}
Erdos-R\`enyi $G(n,M)$  & Bipartite $p=0.2$ \\
\vspace{-0.5cm}
\includegraphics[scale=0.4]{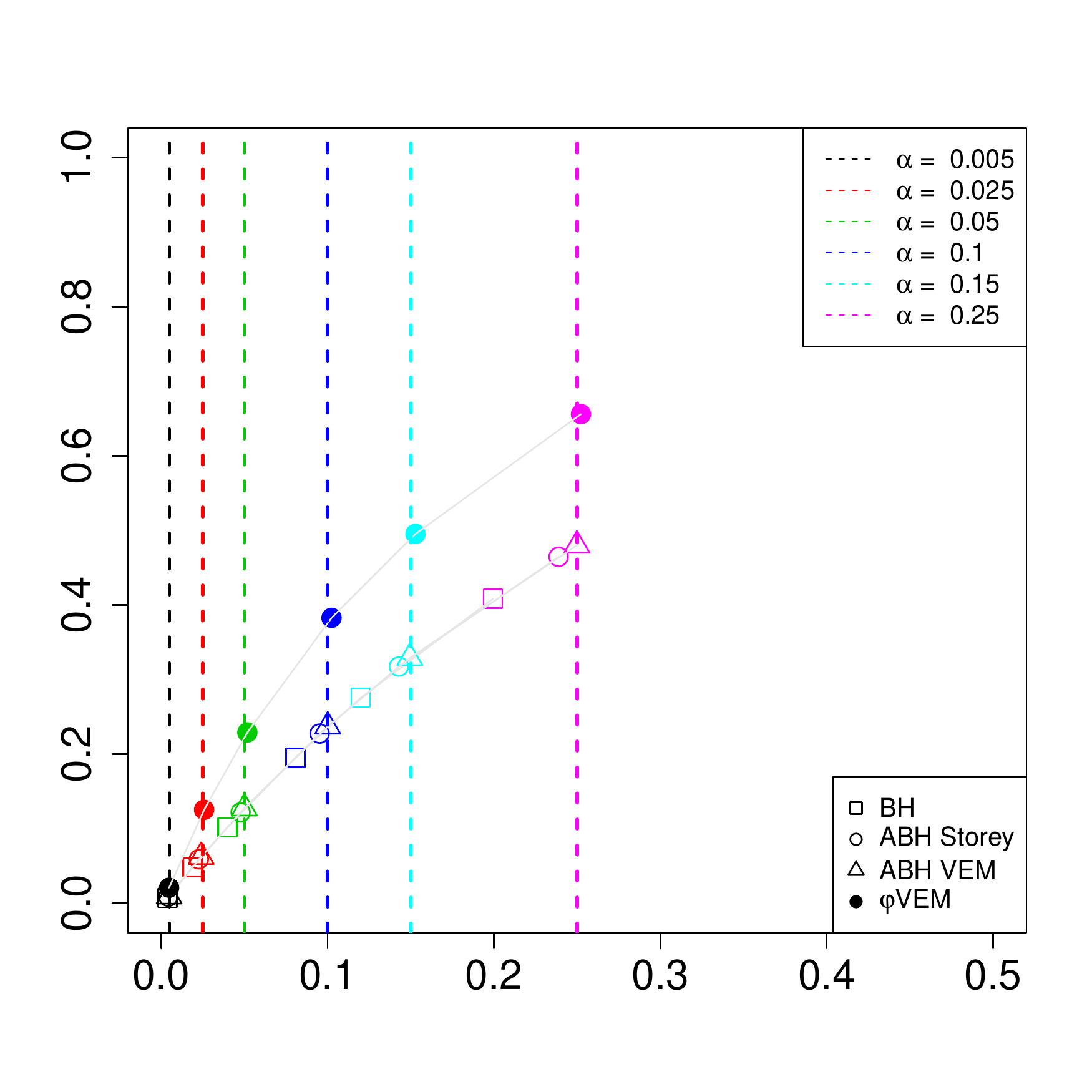}&\includegraphics[scale=0.4]{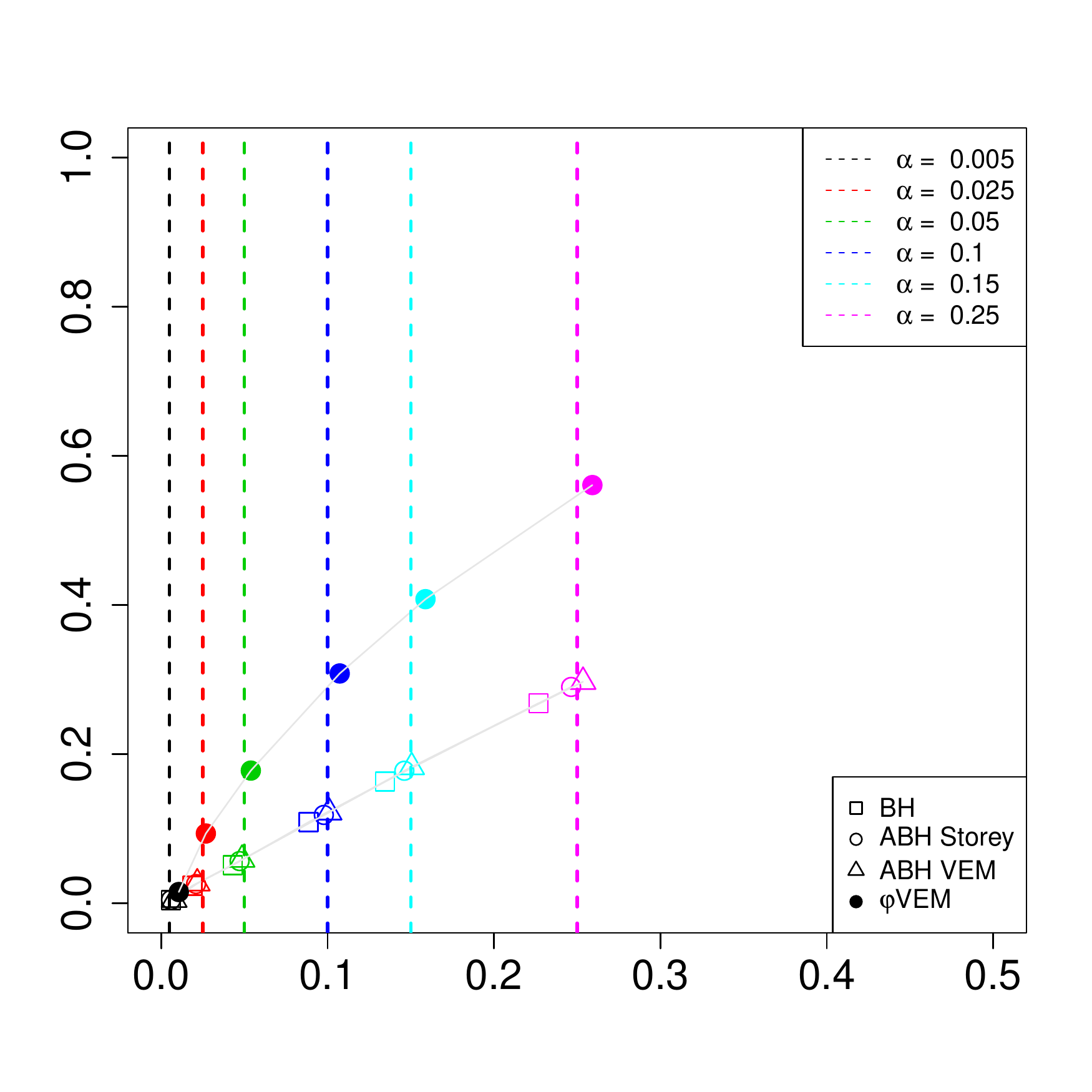}\\
\vspace{-0.5cm}
Bipartite $p=0.5$ &Preferential attachment \\
\vspace{-0.5cm}
\includegraphics[scale=0.4]{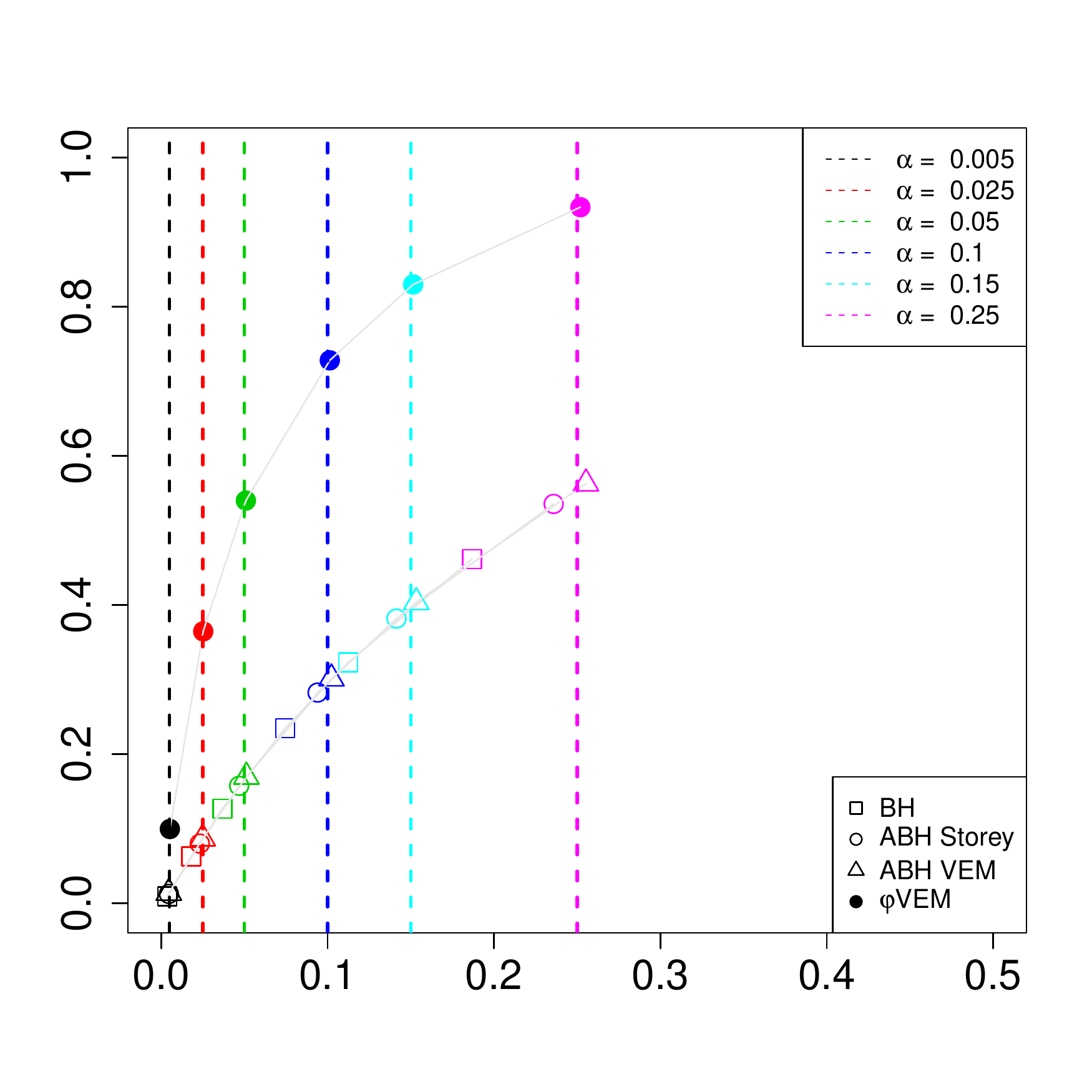}&\includegraphics[scale=0.4]{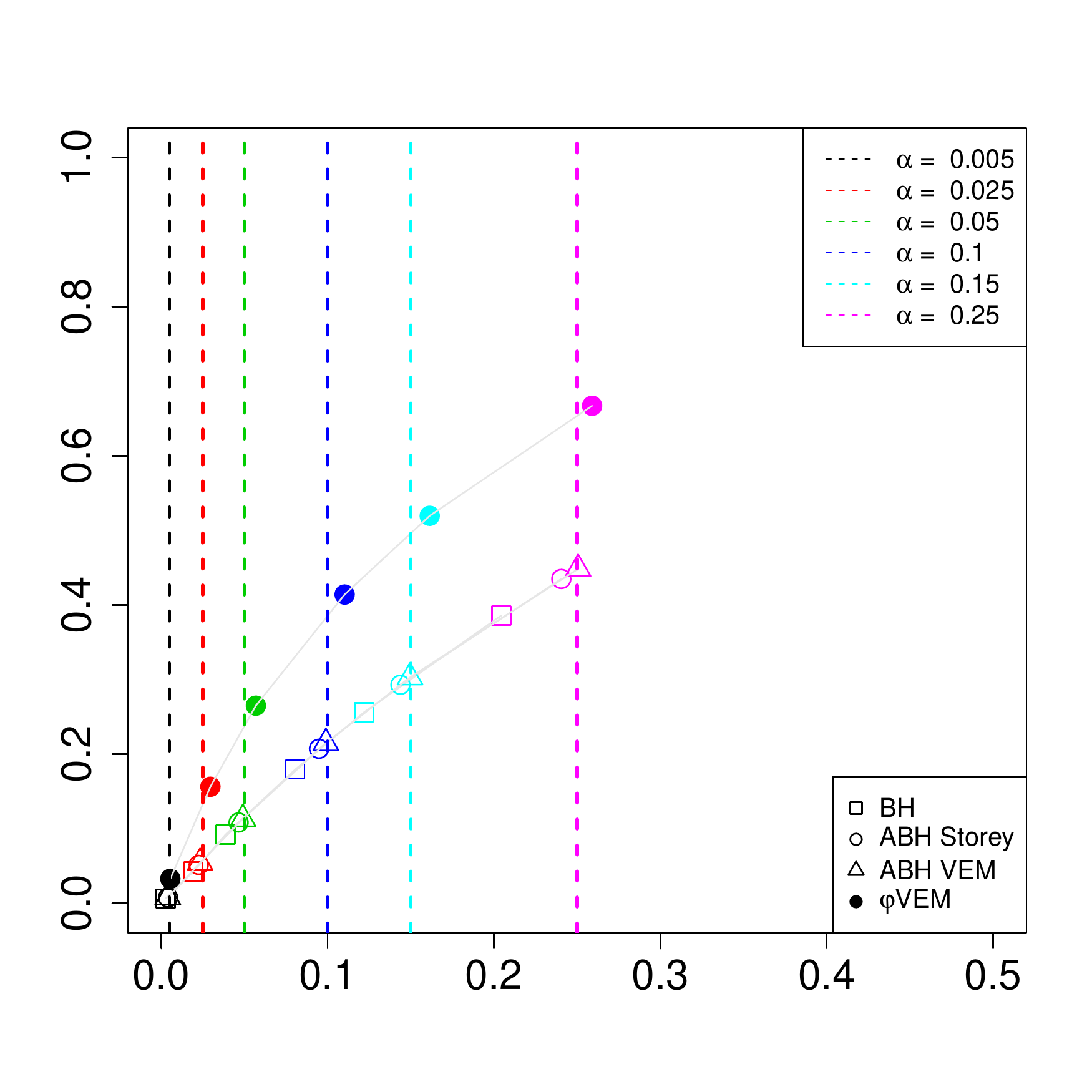}\\
\vspace{-0.5cm}
\end{tabular}
\end{center}
\caption{Same as Figure~\ref{fig:sbm_Q_2_w} in scenario 3.
 \label{fig:scenario3}}
\end{figure}

\section{Discussion}\label{sec:discussion}

The originality of our approach is two-fold. First, we cast the problem of graph inference as the estimation of a "true" latent graph, which can fit a large variety of heterogeneous graph topologies. 
Second, our testing procedure incorporates the learned graph topology to increase the power with respect to standard approaches. That is, the position of the nodes in the entire graph topology is taken into account in the decision to remove or keep edges.

Markedly, the first fold is done by introducing the NSBM with a devoted VEM algorithm, which has an interest on their own in numerous applications where the user may not be interested in graph inference but merely in clustering. Instead of first building a sparse graph by thresholding or a $k$-nearest neighbor approach, before applying some clustering procedure, the input of the VEM algorithm is a dense graph and no troublesome choice of some connectivity parameter, that influences the clustering, is required.

The second fold is done by following a $q$-value-based approach inspired from the standard literature on FDR/TDR in mixture modeling. Our main theoretical results bring a novelty in that area: the FDR/TDR guarantees are non-asymptotic in the size of the graph, with general regularity/complexity conditions on the model, which has an interest in its own right. 

{Let us also mention that there are previous work on building optimal FDR-decision in a group context (see, e.g., \citep{CS2009}), for which a common criticism is as follows: if the groups are previously known, with independent measurements between the groups, why not considering separated FDR control analysis on each group, without combining decisions across the groups? 
Our approach do not encounter such a limitation, because the groups are built on the nodes, while the inference is done on the edges, so our decisions are intrinsically linked and considering separated analysis is in any case not suitable.}

Finally, 
the numerical experiments of Section~\ref{sec:scenario2} suggests that FDR/TDR results can be obtained for a model with a "true", deterministic,  adjacency matrix $A$.
Related to this, it is interesting to adopt a Bayesian point of view on our modeling: the parameter of interest being the adjacency matrix $A$, the SBM can be seen as an a priori distribution on the parameter, while the underlying frequentist model is the one with a deterministic $A$.  In this view, our graph inference $\varphiVEM$ is an empirical Bayes procedure that fits the hyper-parameters $\pi,w$ by a marginal maximum likelihood type approach (via the VEM algorithm). 
Hence, obtaining FDR/TDR results for a model with a fixed adjacency matrix $A$ 
meets the recent literature on frequentist properties of Bayesian procedures (see, e.g., \cite{CR2018} in a multiple testing context) and developing such a methodology in our context is an interesting direction for  future work.

\section{Proofs}\label{sec:proofs}

\subsection{Proofs for Section~\ref{sec:VEMintro}}\label{sec:proofvem}

To solve problem  \eqref{eq:vestep:optim} and prove   Proposition \ref{prop_fixed_point} 
we introduce the function
{
\begin{align}\label{def:critJ}
J(\theta;\tau,\theta')
&=-\tilde\E_{\theta',\tau}[  \log \mathcal{L}(A,Z; \tilde P_{\theta',\tau})\:|\:X] +\tilde\E_{\theta',\tau}[\log \mathcal{L}(X,A,Z;\theta)\:|\:X].
\end{align}
}
Note that
 \eqref{eq:vestep:optim} can be stated as
\begin{align*}
\hat \tau 
 &=\arg\max_{\tau\in\mathcal T} J(\theta;\tau,\theta).
 \end{align*}
 
\begin{lemma}\label{lem_vem_J} 

For any $\theta,\theta'\in\Theta$, $\tau\in \mathcal{T}$, the quantity $J(\theta;\tau,\theta')$ has the following expression
\begin{align*}
J(\theta;\tau,\theta')&=\sum_{q=1}^Q\sum_{i=1}^n \tau_{i,q}\log \frac{\pi_{q}}{\tau_{i,q}}+  \sum_{q=1}^Q\sum_{l=1}^Q\sum_{(i,j)\in\cA}\rho_{q,l}^{i,j}\tau_{i,q}\tau_{j,l} \left\{\log g_{\nu_{q,l}}(X_{i,j}) +\log w_{q,l} -1\right\}\\
&\quad+\sum_{q=1}^Q\sum_{l=1}^Q\sum_{(i,j)\in\cA}(1-\rho_{q,l}^{i,j})\tau_{i,q}\tau_{j,l} \left\{\log g_{0,\nu_0}(X_{i,j})+\log(1- w_{q,l}) \right\}.
 \end{align*}
 where $ \rho_{q,l}^{i,j}=  \rho_{q,l}^{i,j}(\theta')$  is defined by \eqref{eq:def:rho}.
\end{lemma}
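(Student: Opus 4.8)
The plan is to start directly from the definition \eqref{def:critJ} of $J$ and to evaluate the two conditional expectations $\tilde\E_{\theta',\tau}[\cdot\mid X]$ appearing in it, using the factorised form \eqref{approx:VEM} of $\tilde P_{\theta',\tau}$. The first point to pin down is the joint law of $(A,Z)$ under $\tilde P_{\theta',\tau}$. Summing \eqref{approx:VEM} over $A$ and using that $\mathcal{L}(A\mid Z,X;\theta')$ is a genuine conditional likelihood shows that the $Z_i$ are independent under $\tilde P_{\theta',\tau}$ with $\tilde P_{\theta',\tau}(Z_i=q)=\tau_{i,q}$, and that, conditionally on $Z$, the variables $A_{i,j}$, $(i,j)\in\cA$, are independent. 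Because the NSBM has the conditional independence structure of Section~\ref{sec:model} (given $Z$ the $A_{i,j}$ are independent Bernoulli$(w_{Z_i,Z_j})$, and given $(Z,A)$ the $X_{i,j}$ are independent), Bayes' formula applied edge by edge identifies $\mathcal{L}(A\mid Z,X;\theta')$ with the product over $(i,j)\in\cA$ of Bernoulli$(\rho_{Z_i,Z_j}^{i,j}(\theta'))$ factors, $\rho$ being given by \eqref{eq:def:rho}; in particular $\tilde\E_{\theta',\tau}[A_{i,j}\mid Z]=\rho_{Z_i,Z_j}^{i,j}(\theta')$. This edge-wise identification of the conditional law of $A$ is the one genuinely structural input; everything after it is bookkeeping.

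I would then treat the two terms of $J$ in turn. For the entropy term, write $\log\mathcal{L}(A,Z;\tilde P_{\theta',\tau})=\log\mathcal{L}(A\mid Z,X;\theta')+\sum_{i=1}^n\log\tau_{i,Z_i}$; averaging the second summand gives $-\sum_{q=1}^Q\sum_{i=1}^n\tau_{i,q}\log\tau_{i,q}$, while for the first summand one expands the logarithm of the edge-wise Bernoulli product, replaces each $A_{i,j}$ inside $\tilde\E_{\theta',\tau}[\cdot\mid Z]$ by its conditional mean $\rho_{Z_i,Z_j}^{i,j}(\theta')$ (legitimate since the quantities multiplying $A_{i,j}$ are $Z$-measurable), and then integrates over $Z$ using $\tilde\E_{\theta',\tau}[h(Z_i,Z_j)]=\sum_{q,\l}\tau_{i,q}\tau_{j,\l}h(q,\l)$ for $(i,j)\in\cA$. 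For the complete-likelihood term I would take the logarithm of the explicit product \eqref{equvraiscompl}, turning it into a sum over $\cA$ of $g_{0,\nu_0}$-, $g_{\nu_{q,\l}}$- and $w_{q,\l}$-contributions plus a sum over $\{1,\dots,n\}$ of $\pi_q$-contributions, then again condition on $Z$, replace each $A_{i,j}$ by $\rho_{Z_i,Z_j}^{i,j}(\theta')$, and average over $Z$ with the same product formula.

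Finally I would add the two contributions and collect terms: the $\log\pi_q$ and $-\log\tau_{i,q}$ pieces merge into $\sum_{q=1}^Q\sum_{i=1}^n\tau_{i,q}\log(\pi_q/\tau_{i,q})$, and the remaining edge pieces --- the $\rho_{q,\l}^{i,j}(\theta')$-weighted $\log g_{\nu_{q,\l}}(X_{i,j})$, $\log g_{0,\nu_0}(X_{i,j})$, $\log w_{q,\l}$ and $\log(1-w_{q,\l})$ terms together with what is left of the entropy contribution --- rearrange into the two displayed sums over $\cA$. The only place calling for real care is this last regrouping, namely keeping precise track of how the Bernoulli$(\rho_{q,\l}^{i,j}(\theta'))$-entropy contribution recombines with the $\rho$-weighted likelihood terms; beyond that the computation is routine, so the written proof should be quite short.
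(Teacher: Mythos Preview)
Your plan is exactly the paper's: split $J$ via \eqref{approx:VEM} into the entropy of $\tilde P_{\theta',\tau}$ and the expected complete log-likelihood, identify $\mathcal{L}(A\mid Z,X;\theta')$ as a product of Bernoulli$(\rho_{Z_i,Z_j}^{i,j}(\theta'))$ factors so that $\tilde\E_{\theta',\tau}[A_{i,j}\mid Z]=\rho_{Z_i,Z_j}^{i,j}(\theta')$, expand $\log\mathcal{L}(X,A,Z;\theta)$ from \eqref{equvraiscompl}, average first over $A$ then over $Z$ using the product law $\tau_{i,q}\tau_{j,\l}$, and collect terms. The paper carries out precisely these steps in the same order, so your proposal and the paper's proof coincide.
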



\begin{proof}
We have by \eqref{approx:VEM} that
\begin{align*}
J(\theta;\tau,\theta')
&=
-\tilde\E_{\theta',\tau}[ \log \mathcal{L} (A\:|\: Z,X; \theta')\:|\:X] -\tilde\E_{\theta',\tau}[ \log 
\mathcal{L}(Z\:|\:X; \tilde P_{\theta',\tau})
\:|\:X] \\
&+\tilde\E_{\theta',\tau}[\log \mathcal{L}(X,A,Z;\theta)\:|\:X].
\end{align*}
Now, by using \eqref{equvraiscompl}, we have 
\begin{align*}
\tilde\E_{\theta',\tau}[\log \mathcal{L}(X,A,Z;\theta)\:|\:X]
&=
\sum_{(i,j)\in \cA} \tilde \P_{\theta',\tau}\left (A_{i,j}=0  \:\middle|\:X\right)\log g_{0,\nu_0}(X_{i,j})\\
&\quad + \sum_{q=1}^Q\sum_{\l=1}^Q\sum_{(i,j)\in\cA}\tilde\P_{\theta',\tau}\left( A_{i,j}=1, Z_{i,q}Z_{j,\l}=1 \:\middle|\:X\right) \log g_{\nu_{q,\l}}(X_{i,j})\\
&\quad 
+ \sum_{q\le \l}\tilde\E_{\theta',\tau}\left [ M_{q,\l}  \:\middle|\:X\right]\log w_{q,\l} +\tilde\E_{\theta',\tau}\left [ \bar M_{q,\l}  \:\middle|\:X\right]\log (1-w_{q,\l})\\
&\quad + \sum_{q=1}^Q\sum_{i=1}^n \tilde\E_{\theta',\tau}\left [Z_{i,q}\:\middle|\:X\right]\log \pi_{q}.
\end{align*}
This gives
\begin{align*}
J(\theta;\tau,\theta')&=
-\sum_{q=1}^Q\sum_{\l=1}^Q\sum_{(i,j)\in\cA}\rho_{q,\l}^{i,j}\tau_{i,q}\tau_{j,\l}  
+\sum_{q=1}^Q\sum_{i=1}^n \tau_{i,q}\log \frac{\pi_{q}}{\tau_{i,q}}
\\&\quad+
\sum_{(i,j)\in \cA} \log g_{0,\nu_0}(X_{i,j}) \sum_{q=1}^Q\sum_{\l=1}^Q (1-\rho_{q,\l}^{i,j})\tau_{i,q}\tau_{j,\l} \\
&\quad + \sum_{q=1}^Q\sum_{\l=1}^Q\sum_{(i,j)\in\cA}\rho_{q,\l}^{i,j}\tau_{i,q}\tau_{j,\l}  \log g_{\nu_{q,\l}}(X_{i,j})
\\
&\quad 
+\sum_{q=1}^Q\sum_{\l=1}^Q\log w_{q,\l} \sum_{(i,j)\in\cA}\rho_{q,\l}^{i,j}\tau_{i,q}\tau_{j,\l}
+\sum_{q=1}^Q\sum_{\l=1}^Q\log(1- w_{q,\l}) \sum_{(i,j)\in\cA}(1-\rho_{q,\l}^{i,j})\tau_{i,q}\tau_{j,\l}.
\end{align*}
 Rearranging terms yields the result.

\end{proof}

\begin{proof}[Proof of Proposition \ref{prop_fixed_point}] 
From  Lemma \ref{lem_vem_J} we see that
the partial derivative of $J$ with respect to $\tau_{i,q}$ is given by
\begin{align*} 
\frac\partial{\partial\tau_{i,q}} J(\theta;\tau,\theta')
= -\log\tau_{i,q} +\log\pi_q-1 + \sum_{j\neq i}\sum_{\l=1}^Q\tau_{j,\l} d_{q,\l}^{i,j}.
\end{align*} 
And the zero of this derivate satisfies
\begin{align*} 
 \tau_{i,q} =\pi_q\exp\left(
\sum_{j\neq i}\sum_{\l=1}^Q\tau_{j,\l} d_{q,\l}^{i,j}-1\right).
\end{align*} 
Finally, the condition $\sum_{q=1}^Q\tau_{i,q}=1$ yields the result.
\end{proof}

\begin{proof}[Proof of Proposition \ref{prop_mstep_pi_w}]
We see that
\begin{align*} 
\max_{\theta} \tilde\E_{\theta',\tau}\left[\log \mathcal{L}(X,A,Z;\theta)\:|\:X\right]&=
\max_{ \nu_0,\nu} \tilde\E_{\theta',\tau}\left[\log \mathcal{L}(X\:|\:A,Z; \nu_0,\nu)\:|\:X\right]\\
&\quad+ \max_{w}\tilde\E_{\theta',\tau}\left[\log  \mathcal{L}(A \:|\: Z;w)\:|\:X\right]
+ \max_{\pi}\tilde\E_{\theta',\tau}\left[\log \mathcal{L}( Z;\pi)\:|\:X\right].
\end{align*} 
For the term in $\pi$ we have 
\begin{align*} 
\tilde\E_{\theta',\tau}\left[\log \mathcal{L}( Z;\pi)\:|\:X\right]
=\sum_{q=1}^Q \log \pi_q \sum_{i=1}^n\tilde\E_{\theta',\tau}[Z_{i,q}\:|\:X] 
=\sum_{q=1}^Q\log \pi_q \sum_{i=1}^n\tau_{i,q} .
\end{align*} 
Taking into account the condition $\sum_{q=1}^Q\pi_q=1$, we obtain that the maximum is attained at $\hat\pi_q$ given by \eqref{eq:mstep:pi}.
Concerning the optimization with respect to $w$ we have
\begin{align*} 
\tilde\E_{\theta',\tau}\left[\log  \mathcal{L}(A \:|\: Z;w)\:|\:X\right]
&=\sum_{q\le \l}\tilde\E_{\theta',\tau}\left[   M_{q,\l}\:|\:X\right]\log(w_{q,\l})+ \tilde\E_{\theta',\tau}\left[ \bar M_{q,\l}\:|\:X\right]\log(1-w_{q,\l}),
\end{align*} 
which is maximal at
\begin{align*} 
\hat w_{q,\l}= \frac{\tilde\E_{\theta',\tau}\left[   M_{q,\l}\:|\:X\right]}{\tilde\E_{\theta',\tau}\left[ M_{q,\l}+\bar M_{q,\l}\:|\:X\right]}.
\end{align*} 
  Now, for all $q\neq \l$,
\begin{align*} 
\tilde\E_{\theta',\tau}\left[   M_{q,\l}\:|\:X\right]
 &= \sum_{(i,j)\in \cA}  \tilde\P_{\theta',\tau}\left(A_{i,j}=1, Z_{i,q}Z_{j,\l}=1\:|\:X \right) +  \tilde\P_{\theta',\tau}\left(A_{i,j}=1, Z_{i,\l}Z_{j,q}=1\:|\:X \right)\\
 &= \sum_{(i,j)\in \cA}  \P_{\theta'}\left(A_{i,j}=1| Z_{i,q}Z_{j,\l}=1,X \right) \tau_{i,q}\tau_{j,\l}+  \P_{\theta'}\left(A_{i,j}=1| Z_{i,\l}Z_{j,q}=1,X \right)\tau_{i,\l}\tau_{j,q}\\
  &= \sum_{(i,j)\in \cA}  \rho_{q,\l}^{i,j} (\tau_{i,q}\tau_{j,\l}+  \tau_{i,\l}\tau_{j,q})
  =\sum_{(i,j) \in\cA} \kappa_{q,\l}^{i,j}.
  \end{align*} 
Since $M_{q,\l}+\bar   M_{q,\l}=\#\{ (i,j)\in \cA:  Z_{i,q}Z_{j,\l} + Z_{i,\l}Z_{j,q}>0\}$, we obtain
\begin{align*} 
\tilde\E_{\theta',\tau}\left[   M_{q,\l}+\bar   M_{q,\l}\:|\:X\right]
&= \sum_{(i,j)\in \cA} \tilde\P_{\theta',\tau}\left(Z_{i,q}Z_{j,\l}=1\:|\:X \right) +\tilde\P_{\theta',\tau}\left(Z_{i,\l}Z_{j,q}=1\:|\:X \right) \\
& =  \sum_{(i,j)\in \cA} \tau_{i,q}\tau_{j,\l}+  \tau_{i,\l}\tau_{j,q}.
 \end{align*} 
 This yields the solutions given in  \eqref{eq:mstep:wql}--\eqref{eq:mstep:wqq}.  

As for $(\nu_0,\nu)$, we have
\begin{align*}  
\tilde\E_{\theta',\tau}[\log \mathcal{L}(X\:|\:A,Z; \nu_0,\nu)\:|\:X]
&= 
\tilde\E_{\theta',\tau}\left[\sum_{\substack{(i,j)\in \cA\\ A_{i,j}=0}} \log g_{0,\nu_0}(X_{i,j})\:\middle|\:X \right]\nonumber\\
&\quad+\tilde\E_{\theta',\tau}\left[\sum_{q=1}^Q\sum_{\l=1}^Q\sum_{\substack{(i,j): A_{i,j}=1\\ Z_{i,q}Z_{j,\l}=1}} \log(g_{\nu_{q,\l}}(X_{i,j}))\:\middle|\:X\right]
\\
&= 
\sum_{(i,j)\in \cA} \log g_{0,\nu_0}(X_{i,j}) \sum_{q\le \l}  \bar\kappa_{q,\l}^{i,j}+
\sum_{q\le \l} \sum_{(i,j)\in \cA}  \kappa_{q,\l}^{i,j} \log(g_{\nu_{q,\l}}(X_{i,j})),
\end{align*}  
which yields the result.
\end{proof}



\subsection{Proofs for Section~\ref{sec:theory}}\label{sec:proof}\label{proofofFDRcontrol}\label{proofofPowercontrol}\label{sec:proofmaincor}

In the sequel, we denote the $\l$-values $\l_{i,j}(X,Z;\theta)$ \eqref{equ-lvaluescond} by  $\l_{i,j}(\theta)$ for short.
Let us first add some notation, that will be useful in our proof.
Define for any $t\in[0,1]$, $\theta\in\Theta$, 
\begin{align}
\wh{F}_{0}(\theta,t)&=m^{-1}\: \sum_{(i,j)\in\mathcal{A}} (1-A_{i,j}) \ind{\ell_{i,j}(\theta) \leq t};\label{equ:Fhat0}\\
\wh{F}_{1}(\theta,t)&=m^{-1}\: \sum_{(i,j)\in\mathcal{A}} A_{i,j} \ind{\ell_{i,j}(\theta) \leq t};\label{equ:Fhat1}\\
\wh{F}(\theta,t)&=\wh{F}_{0}(\theta,t)+\wh{F}_{1}(\theta,t)=m^{-1}\: \sum_{(i,j)\in\mathcal{A}}  \ind{\ell_{i,j}(\theta) \leq t};\label{equ:Fhat}\\
\FDP(\theta, t) &= 
\frac{\wh{F}_{0}(\theta,t)}{\wh{F}(\theta,t)} \label{equ:FDP}.
 \end{align}
Also let for $\theta' =(\pi',w',\nu'_0,\nu')\in\Theta$,
\begin{align}
F_{0,\theta'}(\theta,t)&=\E_{\theta'} [\wh{F}_{0}(\theta,t)]=
\sum_{q,\l} \pi'_q\pi'_\l (1-w'_{q,\l}) \: \q_0(t,q,\l; \theta',\theta);
\label{equ:F0}\\
F_{1,\theta'}(\theta,t)&=\E_{\theta'} [\wh{F}_{1}(\theta,t)]=
\sum_{q,\l} \pi'_q\pi'_\l w'_{q,\l} \: \q_1(t,q,\l; \theta',\theta);
\label{equ:mF1}\\
F_{\theta'}(\theta,t)&= F_{0,\theta'}(\theta,t)+F_{1,\theta'}(\theta,t).
\label{equ:F}
 \end{align}

Note that $\Q_{\theta'}(\theta,t)$ defined in \eqref{equ:Qcond} is thus such that
\begin{align}
\Q_{\theta'}(\theta,t) &=\frac{F_{0,\theta'}(\theta,t)}{F_{\theta'}(\theta,t)}\label{equ:mFDR}.
 \end{align}

\begin{proof}[Proof of Theorem~\ref{FDRcontrol}]

First observe that  by \eqref{normswitch} and \eqref{varphisimple}, we have
$$
\FDR\left(\theta_0, \varphiVEM\right) \leq 
\E_{\theta_0}\left[\FDP(\hat{\theta}^\sigma,T_{\hat{\theta}^\sigma}(\alpha))  \mathds{1}_{\|\hat{\theta}^\sigma-\theta_0\|_\infty\leq \eps} \right]
+ \P_{\theta_0}( \| (\wh{\theta}, \wh{Z}) - (\theta_0, Z)\|>\eps),
$$
where $\sigma$ denotes any permutation of $\{1,\dots,Q\}$ that minimizes  $\sigma\mapsto  \|\wh{\theta}^\sigma-\theta\|_\infty\vee \|\wh{Z} - Z^\sigma\|_\infty$.
Now, let  $x>0$ with $x<\pimin^2\wedge (1-\wmax)$ and $y>0$ and consider the  event 
$$
\Omega=\left\{\sup_{\substack{\theta\in\Theta, t \in[0,1]\\ {F}_{\theta_0}(\theta,t)\geq y }}\left|\FDP(\theta,t)-\Q_{\theta_0}(\theta,t) \right|\leq x\right\}.
$$
We have
\begin{align*}
\E_{\theta_0}\left[\FDP(\hat{\theta}^\sigma,T_{\hat{\theta}^\sigma}(\alpha))  \mathds{1}_{\|\hat{\theta}^\sigma-\theta_0\|_\infty\leq \eps} \right]
&\leq \E_{\theta_0}\left[\FDP(\hat{\theta}^\sigma,T_{\hat{\theta}^\sigma}(\alpha)) \mathds{1}_{\Omega} \mathds{1}_{\|\hat{\theta}^\sigma-\theta_0\|_\infty\leq \eps} \right]+ \P_{\theta_0}(\Omega^c).
\end{align*}
Now, applying Lemma~\ref{lemqvaluedef} \eqref{majT} (with the definition of $\eta(\eps)$ therein), 
there exists $e=e(\theta_0,\alpha,Q)\in(0,1)$  such that for all $\eps\leq e$,
if $\|\hat{\theta}^\sigma-\theta_0\|_\infty\leq \eps$ then 
$ T_{{\theta_0}}(\min  \mathcal{K}) \leq  T_{{\theta_0}}(\alpha-\eta(\eps)) \leq T_{\hat{\theta}^\sigma}(\alpha)\leq  T_{{\theta_0}}(\alpha+\eta(\eps))\leq T_{{\theta_0}}(\max  \mathcal{K}).
$
In particular, ${F}_{\theta_0}(\hat{\theta}^\sigma,T_{\hat{\theta}^\sigma}(\alpha))\geq {F}_{\theta_0}(\hat{\theta}^\sigma, T_{{\theta_0}}(\min\mathcal{K})) \geq \kappa(1- \eta(\eps)/4) $, by applying Lemma~\ref{lemqvaluedef} \eqref{majF01}. 
Hence, choosing $y\leq \kappa/2\leq\kappa(1- \eta(\eps)/4) $ (which holds by choosing $e$ small enough), we get by definition of $\Omega$,
\begin{align*}
\E_{\theta_0}\left[\FDP(\hat{\theta}^\sigma,T_{\hat{\theta}^\sigma}(\alpha)) \mathds{1}_{\Omega} \mathds{1}_{\|\hat{\theta}^\sigma-\theta_0\|_\infty\leq \eps} \right] &\leq x+\E_{\theta_0}\left[\Q_{\theta_0}(\hat{\theta}^\sigma,T_{\hat{\theta}^\sigma}(\alpha)) \mathds{1}_{\|\hat{\theta}^\sigma-\theta_0\|_\infty\leq \eps} \right] \\
&\leq x+\E_{\theta_0}\left[\Q_{\theta_0}(\hat{\theta}^\sigma,T_{{\theta_0}}(\alpha+\eta(\eps))) \mathds{1}_{\|\hat{\theta}^\sigma-\theta_0\|_\infty\leq \eps} \right] \\
&\leq x+\Q_{\theta_0}(\theta_0,T_{{\theta_0}}(\alpha+\eta(\eps))) + \eta(\eps) = x + \alpha + 2\eta(\eps) ,
\end{align*}
by applying \eqref{majQ}.
This gives
\begin{align*}
\FDR\left(\theta_0, \varphiVEM\right)&\leq  \alpha +x + 2\eta(\eps)
 + \P_{\theta_0}(\Omega^c) +  \P_{\theta_0}( \| (\wh{\theta}, \wh{Z}) - (\theta_0, Z)\|>\eps).
\end{align*}
We conclude by upper bounding $\P_{\theta_0}(\Omega^c)$ according to Lemma~\ref{lem:concFDPprocess}.
\end{proof}

\begin{proof}[Proof of Theorem~\ref{Powercontrol}]

First observe that, similarly to the proof of Theorem~\ref{FDRcontrol} (and using the same notation for the permutation $\sigma$), we have
$$
\pione\TDR\left(\theta_0, \varphiVEM\right) \geq 
\E_{\theta_0}\left[  \wh{F}_{1}(\wh{\theta}^\sigma,T_{\hat{\theta}^\sigma}(\alpha))\mathds{1}_{\| (\wh{\theta}, \wh{Z}) - (\theta_0, Z)\|\leq \eps}\right].
$$

Also observe that for the procedure $\varphi^*$ given by \eqref{def:optimalproc}, we have  
\begin{align*}
\pione \TDR\left(\theta_0, \varphi^*\right) 
&=\E_{\theta_0}\left[  \wh{F}_{1}({\theta_0},T_{{\theta_0}}(\alpha))\right] = {F}_{1,\theta_0}(\theta_0,T_{{\theta_0}}(\alpha)).
\end{align*}
For all $x>0$ with $x<\pimin^2\wedge \wmin$, consider the event
$$
\Omega_1= \left\{\sup_{\theta\in\Theta, t \in[0,1]}\left|\wh{F}_{1}(\theta,t)-{F}_{1,\theta_0}(\theta,t) \right|\leq x\right\}.
$$
We obviously have 
\begin{align*}
\E_{\theta_0}\left[  \wh{F}_{1}(\wh{\theta}^\sigma,T_{\hat{\theta}^\sigma}(\alpha))\mathds{1}_{\| (\wh{\theta}, \wh{Z}) - (\theta_0, Z)\|\leq \eps}\right]&\geq \E_{\theta_0}\left[  \wh{F}_{1}(\wh{\theta}^\sigma,T_{\hat{\theta}^\sigma}(\alpha))\mathds{1}_{\Omega_1}\mathds{1}_{\| (\wh{\theta}, \wh{Z}) - (\theta_0, Z)\|\leq \eps}\right] \\
&\geq  \E_{\theta_0}\left[  {F}_{1,\theta_0}(\wh{\theta}^\sigma,T_{\hat{\theta}^\sigma}(\alpha))\mathds{1}_{\Omega_1}\mathds{1}_{\| (\wh{\theta}, \wh{Z}) - (\theta_0, Z)\|\leq \eps}\right] -x  .
\end{align*}
Now, by applying Lemma~\ref{lemqvaluedef} 
(with the definition of $\eta(\eps)$ therein), there exists  $e=e(\theta_0,\alpha,Q)\in(0,1)$ such that for all $\eps\leq e$,  if $\|\hat{\theta}^\sigma-\theta_0\|_\infty\leq \eps$ then 
$ T_{{\theta_0}}(\min  \mathcal{K}) \leq  T_{{\theta_0}}(\alpha-\eta(\eps)) \leq T_{\hat{\theta}^\sigma}(\alpha)\leq  T_{{\theta_0}}(\alpha+\eta(\eps))\leq T_{{\theta_0}}(\max  \mathcal{K})
$ and
\begin{align*}
&\E_{\theta_0}\left[  {F}_{1,\theta_0}(\wh{\theta}^\sigma,T_{\hat{\theta}^\sigma}(\alpha))\mathds{1}_{\Omega_1}\mathds{1}_{\| (\wh{\theta}, \wh{Z}) - (\theta_0, Z)\|\leq \eps}\right]\\
&\geq    {F}_{1,\theta_0}(\theta_0,T_{{\theta_0}}(\alpha-\eta(\eps)))- \kappa\eta(\eps)/4 -\pione \P_{\theta_0}(\Omega_1^c) - \pione - \P_{\theta_0}( \| (\wh{\theta}, \wh{Z}) - (\theta_0, Z)\|>\eps)   .
\end{align*}
Now using the functions  $\mathcal{W}_{T,\q_1}$ and $\mathcal{W}_{\alpha,T}$ defined by \eqref{equmodulusq1} and  \eqref{equmodulust}, respectively, we have by \eqref{majF1},
\begin{align*}
 {F}_{1,\theta_0}(\theta_0,T_{{\theta_0}}(\alpha-\eta(\eps)))&\geq  {F}_{1,\theta_0}(\theta_0,T_{{\theta_0}}(\alpha)) - 
 \mathcal{W}_{T,\q_1}\left(T_{{\theta_0}}(\alpha)-T_{{\theta_0}}(\alpha-\eta(\eps)\right)\\
 &\geq {F}_{1,\theta_0}(\theta_0,T_{{\theta_0}}(\alpha)) - 
 \mathcal{W}_{T,\q_1}\circ  \mathcal{W}_{\alpha,T} \left(\eta(\eps)\right).
 \end{align*}
Using Lemma~\ref{lem:concF01} to upper-bound $\P_{\theta_0}(\Omega_1^c)$ concludes the proof.
\end{proof}

\begin{proof}[Proof of Corollary~\ref{maincor}]
By using Theorems~\ref{FDRcontrol}~and~\ref{Powercontrol}, and since $\wh{\theta}$, $\wh{Z}$ are consistent, 
we have for all $\eps\in (0,e)$, and $x\in (0,\pimin^2\wedge (1-\wmax))$,
$$
\limsup_n  \FDR\left(\theta_0,\varphiVEM\right) \leq\:  \alpha +x + 16 \kappa^{-1} (\mathcal{W}_{\theta_0,\q}(\eps)+3Q^2\eps)  
$$
and 
\begin{align*}
&\liminf_n \{ \TDR\left(\theta_0, \varphiVEM\right) -\TDR\left(\theta_0, \varphi^*\right)\}\\
&\hspace{2cm}\geq  - \pione^{-1} x-2\mathcal{W}_{\theta_0,\q}(\eps)-6Q^2\eps- \mathcal{W}_{T,\q_1}\circ  \mathcal{W}_{\alpha,T} \left(8 \kappa^{-1} (\mathcal{W}_{\theta_0,\q}(\eps)+3Q^2\eps)\right).
\end{align*}
Taking now $\eps$ and $x$ tending to $0$ gives the result.
\end{proof}

\section*{Acknowledgments} 

We would like to thank Isma\"el Castillo, Antoine Chambaz, Catherine Matias and St\'ephane Robin for interesting discussions.
This work has been supported by the grants
ANR-16-CE40-0019 (SansSouci), ANR-17-CE40-0001 (BASICS) and  ANR-18-CE02-0010-01(EcoNet) of the French National Research Agency ANR.

\bibliographystyle{apalike}
\bibliography{biblio}

\section{Supplementary material}\label{sec:supp}

\subsection{Main lemmas for Section~\ref{sec:theory}}

\begin{lemma} \label{lem:functionQvalue}
 Let  Assumption~\ref{cont} be true and consider any $\theta\in\Theta$ with the corresponding quantities $t_{1,q,\l}(\theta),t_{2,q,\l}(\theta)$, $q,\l\in\{1,\dots,Q\}^2$, $t_1(\theta)=\min_{q,\l} t_{1,q,\l}(\theta)$ and $t_2(\theta)=\max_{q,\l} t_{2,q,\l}(\theta)$.
 Then  the function $t\mapsto \Q_{\theta}(\theta,t)$ is increasing on $[t_1(\theta),t_2(\theta)]$, continuous on $(t_1(\theta),1]$, satisfies $\Q_{\theta}(\theta,t)=0$ for $t\in [0,t_1(\theta)]$, $\Q_{\theta}(\theta,t)=\pizero$  for $t\in [t_2(\theta),1]$ and $\Q_{\theta}(\theta,t)< t$ for $t\in (t_1(\theta),1]$. 
\end{lemma}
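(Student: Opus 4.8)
The plan is to pass through a probabilistic representation of $t\mapsto\Q_\theta(\theta,t)$ as a truncated mean, which makes all five assertions transparent. Recall from \eqref{equ:F0}, \eqref{equ:F} and \eqref{equ:mFDR} that $\Q_\theta(\theta,t)=F_{0,\theta}(\theta,t)/F_\theta(\theta,t)$. First I would let $\nu=\nu_\theta$ be the law, under $\P_\theta$, of the $\ell$-value $\bell(X_{1,2},Z_1,Z_2,\theta)$, a probability measure on $[0,1]$. Since $(X_{i,j},Z_i,Z_j)$ has the same distribution for every $(i,j)\in\mathcal{A}$, \eqref{equ:F} gives $F_\theta(\theta,t)=\P_\theta(\bell(X_{1,2},Z_1,Z_2,\theta)\le t)=\nu([0,t])$; and conditioning on $(X,Z)$ and using $\E_\theta[1-A_{i,j}\mid X,Z]=\l_{i,j}(X,Z,\theta)=\bell(X_{i,j},Z_i,Z_j,\theta)$ (by \eqref{equ-lvaluescond}--\eqref{formulalvalueZknown}), which is $(X,Z)$-measurable, gives $F_{0,\theta}(\theta,t)=\E_\theta[\bell(X_{1,2},Z_1,Z_2,\theta)\ind{\bell(X_{1,2},Z_1,Z_2,\theta)\le t}]=\int_{[0,t]}u\,\nu(\mathrm{d}u)$. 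Hence
\[
\Q_\theta(\theta,t)=\frac{\int_{[0,t]}u\,\nu(\mathrm{d}u)}{\nu([0,t])},\qquad t\in[0,1],
\]
with the convention $0/0=0$.

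Next I would extract the boundary behaviour and the positivity of the denominator directly from Assumption~\ref{cont}(ii). For $t\le t_1(\theta)=\min_{q,\l}t_{1,q,\l}(\theta)$ all $\q_0(t,q,\l;\theta,\theta)$ vanish, so $F_{0,\theta}(\theta,t)=0$ and $\Q_\theta(\theta,t)=0$; for $t\ge t_2(\theta)=\max_{q,\l}t_{2,q,\l}(\theta)$ all $\q_0(t,q,\l;\theta,\theta)$ and $\q_1(t,q,\l;\theta,\theta)$ equal $1$, so by \eqref{equpi0pi1}, $F_{0,\theta}(\theta,t)=\pizero$ and $F_\theta(\theta,t)=\sum_{q,\l}\pi_q\pi_\l=1$, whence $\Q_\theta(\theta,t)=\pizero$. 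Finally, taking $(q_0,\l_0)$ attaining the minimum in the definition of $t_1(\theta)$, Assumption~\ref{cont}(ii) makes $\q_0(t,q_0,\l_0;\theta,\theta)>0$ for every $t>t_1(\theta)$; combined with $\pi_{q_0},\pi_{\l_0},1-w_{q_0,\l_0}\in(0,1)$ this gives $\nu([0,t])=F_\theta(\theta,t)\ge F_{0,\theta}(\theta,t)>0$ on $(t_1(\theta),1]$.

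From these the remaining three claims follow by routine estimates on the truncated mean. Continuity on $(t_1(\theta),1]$ holds because $F_{0,\theta}(\theta,\cdot)$ and $F_\theta(\theta,\cdot)$ are continuous by Assumption~\ref{cont}(i) and the denominator stays positive there. For monotonicity, writing $N(t)=\int_{[0,t]}u\,\nu(\mathrm{d}u)$ and $D(t)=\nu([0,t])$, one has for $t\le t'$ that $N(t')D(t)-N(t)D(t')=D(t)\int_{(t,t']}u\,\nu(\mathrm{d}u)-N(t)\,\nu((t,t'])\ge t\,D(t)\,\nu((t,t'])-t\,D(t)\,\nu((t,t'])=0$, using $u>t$ on $(t,t']$ and $u\le t$ on $[0,t]$ (and $N(t)=0$ whenever $D(t)=0$); dividing by $D(t)D(t')$ shows $\Q_\theta(\theta,\cdot)$ is non-decreasing on $[0,1]$, in particular on $[t_1(\theta),t_2(\theta)]$. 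And for $t\in(t_1(\theta),1]$, choosing $t'\in(t_1(\theta),t)$ one has $\nu([0,t))\ge\nu([0,t'])>0$, so $\int_{[0,t]}(t-u)\,\nu(\mathrm{d}u)\ge(t-t')\nu([0,t'])>0$, i.e.\ $N(t)<t\,D(t)$, i.e.\ $\Q_\theta(\theta,t)<t$.

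The only non-routine point is the representation in the first step: the identity $F_{0,\theta}(\theta,t)=\int_{[0,t]}u\,\nu(\mathrm{d}u)$ hinges on the fact that $\bell(\cdot)$ is \emph{itself} the posterior null probability, so that $\E_\theta[1-A_{i,j}\mid X,Z]$ equals the very statistic being thresholded; once this is seen, the lemma reduces to elementary properties of truncated means together with the piecewise description of $\q_0,\q_1$. A purely analytic alternative would instead need the stochastic-dominance relation $\q_1(t,q,\l;\theta,\theta)\ge\q_0(t,q,\l;\theta,\theta)$ and a more delicate computation for the monotonicity part, so I would favour the probabilistic route.
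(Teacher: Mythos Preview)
Your approach is essentially the same as the paper's: both hinge on the Fubini identity $F_{0,\theta}(\theta,t)=\E_\theta[\bell\,\ind{\bell\le t}]$, i.e.\ the truncated-mean representation $\Q_\theta(\theta,t)=N(t)/D(t)$ with $N(t)=\int_{[0,t]}u\,\nu(\mathrm{d}u)$ and $D(t)=\nu([0,t])$. The boundary values, the continuity on $(t_1(\theta),1]$, and the inequality $\Q_\theta(\theta,t)<t$ are handled correctly and in the same spirit as the paper.

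There is one gap. The lemma asserts \emph{strict} monotonicity on $[t_1(\theta),t_2(\theta)]$ (this is what the paper means by ``increasing'', and it is used downstream to invert $t\mapsto\Q_\theta(\theta,t)$ and obtain a continuous $T_{\theta_0}$), whereas your cross-product computation only delivers $N(t')D(t)-N(t)D(t')\ge 0$, i.e.\ non-decreasing, and you explicitly state only that. To upgrade, observe that for $t_1(\theta)<t<t'\le t_2(\theta)$ you already have $D(t)>0$, and your chain of inequalities is strict as soon as $\nu((t,t'])>0$, since then $\int_{(t,t']}u\,\nu(\mathrm{d}u)>t\,\nu((t,t'])$ strictly while $N(t)\le tD(t)$. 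The paper runs the same computation and closes exactly this point by invoking Assumption~\ref{cont}(ii) to exclude $\nu((t,t'])=0$ on $(t_1(\theta),t_2(\theta)]$; you should add that step. The endpoint $t=t_1(\theta)$ (where $D(t)$ may vanish) is then handled separately via $\Q_\theta(\theta,t_1(\theta))=0<F_{0,\theta}(\theta,t')/F_\theta(\theta,t')$ for $t'>t_1(\theta)$, which you already established.
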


\begin{proof}
First note that the following relation holds (coming from \eqref{equ-lvaluescond}, \eqref{equ:Qcond} and Fubini's theorem): for all $\theta\in\Theta$, $t\in[0,1]$,
\begin{equation}
\label{equ:relationfubini}
\Q_\theta(\theta,t)
= \frac{ \E_\theta\left[\sum_{(i,j)\in\mathcal{A}} \ell_{i,j}(\theta)  \ind{\ell_{i,j}(\theta) \leq t} \right]}{\E_\theta\left[\sum_{(i,j)\in\mathcal{A}}  \ind{\ell_{i,j}(\theta) \leq t} \right]}.
\end{equation}
In the sequel, denote respectively $\Q_{\theta}(\theta,t)$ by $f(t)$ and $\ell_{i,j}(\theta)$ by $\ell_{i,j}$  for short.
Note that by Assumption~\ref{cont}, we have $f(t) = 0$ for $t\in[0,t_1]$, $f(t)=\pizero$ for $t\in [t_2,1]$. Notice that, by \eqref{equ:relationfubini} and by Assumption~\ref{cont}, we have
$$
f(t)= \frac{F_{0,\theta}(\theta,t)}{F_{\theta}(\theta,t)} = \frac{ \E_\theta\left[\sum_{(i,j)\in\mathcal{A}} \ell_{i,j} \ind{\ell_{i,j}< t} \right]}{\E_\theta\left[\sum_{(i,j)\in\mathcal{A}}  \ind{\ell_{i,j}< t} \right]}= \frac{ \E_\theta\left[\sum_{(i,j)\in\mathcal{A}} \ell_{i,j} \ind{\ell_{i,j}\leq t} \right]}{\E_\theta\left[\sum_{(i,j)\in\mathcal{A}}  \ind{\ell_{i,j}\leq t} \right]},\:\mbox{ for } t\in[0,1] .
$$
The left-continuity and right-continuity of $f$ in any $t$ such that $F_{\theta}(\theta,t)>0$ is straightforward by the dominated convergence theorem. 

Now prove that $f$ is increasing on $[t_1,t_2].$ For this, let $t_1\leq t< t'\leq t_2$ and prove $f(t')>f(t)$. If $F_{\theta}(\theta,t)=0$, then $f(t)=0$. Since $F_{0,\theta}(\theta,t')>0$ and $F_{\theta}(\theta,t')>0$, we have $f(t')>0=f(t)$. Now assume $F_{\theta}(\theta,t)>0$, so that $F_{\theta}(\theta,t')>0$ also holds. We let 
$$
\delta=\frac{F_{\theta}(\theta,t')}{F_{\theta}(\theta,t)}-1
= \frac{\E_\theta\left[\sum_{(i,j)\in\mathcal{A}}  \ind{t<\ell_{i,j}\leq t'}\right]}{\E_\theta\left[\sum_{(i,j)\in\mathcal{A}}  \ind{\ell_{i,j}\leq t}\right]}.
$$
Now, we have
\begin{align*}
&\left(f(t')-f(t)\right) F_{\theta}(\theta,t') \\
&=\E_\theta\left[\sum_{(i,j)\in\mathcal{A}} \ell_{i,j} \ind{\ell_{i,j}\leq t'} \right] - (1+\delta) \:\E_\theta\left[\sum_{(i,j)\in\mathcal{A}} \ell_{i,j} \ind{\ell_{i,j}\leq t} \right]\\
&=\E_\theta\left[\sum_{(i,j)\in\mathcal{A}} \ell_{i,j} \ind{t<\ell_{i,j}\leq t'} \right]- \delta \:\E_\theta\left[\sum_{(i,j)\in\mathcal{A}} \ell_{i,j} \ind{\ell_{i,j}\leq t} \right]\\
&\geq t\: \E_\theta\left[\sum_{(i,j)\in\mathcal{A}}  \ind{t<\ell_{i,j}\leq t'} \right]  - \delta \:\E_\theta\left[\sum_{(i,j)\in\mathcal{A}} \ell_{i,j} \ind{\ell_{i,j}\leq t} \right]\\
&\geq t \left(\E_\theta\left[\sum_{(i,j)\in\mathcal{A}}  \ind{t<\ell_{i,j}\leq t'} \right] 
- \delta\: \E_\theta\left[\sum_{(i,j)\in\mathcal{A}}\ind{\ell_{i,j}\leq t}) \right] \right)=0.
\end{align*}
Now, since
$
F_{\theta}(\theta,t') >0,
$
this entails 
$f(t')\geq f(t)$. Also, if $f(t')= f(t)$, the inequalities above are all equalities and we have
\begin{align*}
\E_\theta\left[\sum_{(i,j)\in\mathcal{A}} \ell_{i,j} \ind{t<\ell_{i,j}\leq t'} \right]
= t\: \E_\theta\left[\sum_{(i,j)\in\mathcal{A}}  \ind{t<\ell_{i,j}\leq t'} \right]
\end{align*}
and thus
$$
\E_\theta\left[\sum_{(i,j)\in\mathcal{A}} (\ell_{i,j}-t) \ind{t<\ell_{i,j}\leq t'} \right]
=0
$$
which gives $(\ell_{i,j}-t) \ind{t<\ell_{i,j}\leq t'}=0$ $\P_\theta$-a.s. for all $(i,j)$, which is impossible by Assumption~\ref{cont}. Hence, $f(t')> f(t)$ and the increasingness of $f$ is proved.
 
Finally, let $t\in(t_1,1]$ and prove that $t>f(t)$. 
$$
(t-f(t))F_{\theta}(\theta,t)=\E_\theta\left[\sum_{(i,j)\in\mathcal{A}} (t-\ell_{i,j}) \ind{\ell_{i,j}< t} \right] \geq 0
$$
and thus $t\geq f(t)$. Moreover, $t = f(t)$ entails $(t-\ell_{i,j}) \ind{\ell_{i,j}\leq t}=0$ $\P_\theta$-a.s. for all $(i,j)$, and thus 
$\ell_{i,j}\geq t$ $\P_\theta$-a.s. for all $(i,j)$.
This is excluded by Assumption~\ref{cont}. Hence $t> f(t)$. 

\end{proof}

\begin{lemma}[Optimality of $\varphi^*$]\label{lem:optimalproc}
 Let  Assumption~\ref{cont} be true and let $\theta_0\in\Theta$. Consider $\alpha_*$ given by \eqref{def:alphastar}, $\alpha\in(\alpha_*,\pizero)$ and $\varphi^*$ given by \eqref{def:optimalproc}. Then $
\maFDR(\theta_0,\varphi^*) \leq \alpha $ and for any multiple testing procedure $\varphi$ such that $
\maFDR(\theta_0,\varphi) \leq \alpha $, we have $\TDR(\theta_0,\varphi^*)\geq \TDR(\theta_0,\varphi)$.
\end{lemma}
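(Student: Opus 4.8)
The plan is to reduce the optimality statement to a Neyman--Pearson-type pointwise comparison by re-expressing both criteria through the $\ell$-values, and then to exhibit an explicit Lagrange multiplier for which $\varphi^*$ is simultaneously the pointwise maximizer of the associated Lagrangian and a feasible point at which the MFDR constraint is tight. Throughout I write $t^\star=T_{\theta_0}(\alpha)$ (see \eqref{def:optimalthreshold}), so that $\varphi^*_{i,j}=\ind{\ell_{i,j}(\theta_0)\le t^\star}$, and I abbreviate $\ell_{i,j}=\ell_{i,j}(\theta_0)$. The first step is to locate $t^\star$: combining $\alpha\in(\alpha_*(\theta_0),\pizero)$ with the description of $t\mapsto\Q_{\theta_0}(\theta_0,t)$ given by Lemma~\ref{lem:functionQvalue} — right-limit $\alpha_*(\theta_0)$ at $t_1(\theta_0)$, continuous and increasing on $(t_1(\theta_0),t_2(\theta_0)]$, equal to $\pizero$ on $[t_2(\theta_0),1]$, and $<t$ on $(t_1(\theta_0),1]$ — one gets $t_1(\theta_0)<t^\star<t_2(\theta_0)\le 1$, hence, by continuity together with the maximality of $t^\star$ in the definition of $T_{\theta_0}$, $\Q_{\theta_0}(\theta_0,t^\star)=\alpha$, and also $\alpha<t^\star<1$. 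Since $\pi_q,w_{q,\l}\in(0,1)$ and $t^\star>t_1(\theta_0)$, one has $\E_{\theta_0}[\sum_{(i,j)\in\cA}\varphi^*_{i,j}]=mF_{\theta_0}(\theta_0,t^\star)>0$, so $\maFDR(\theta_0,\varphi^*)=\Q_{\theta_0}(\theta_0,t^\star)=\alpha$, which is the first assertion.

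Next I would rewrite both criteria for an arbitrary procedure $\varphi$, using that $\varphi_{i,j}=\varphi_{i,j}(X)$ is measurable with respect to $(X,Z)$ and that $\ell_{i,j}=\P_{\theta_0}(A_{i,j}=0\mid X,Z)$. Conditioning on $(X,Z)$ and applying the tower rule gives $\E_{\theta_0}[\sum_{(i,j)}A_{i,j}\varphi_{i,j}]=\E_{\theta_0}[\sum_{(i,j)}(1-\ell_{i,j})\varphi_{i,j}]$ and $\E_{\theta_0}[\sum_{(i,j)}(1-A_{i,j})\varphi_{i,j}]=\E_{\theta_0}[\sum_{(i,j)}\ell_{i,j}\varphi_{i,j}]$. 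Consequently $\TDR(\theta_0,\varphi)=(m\pione)^{-1}\E_{\theta_0}[\sum_{(i,j)}(1-\ell_{i,j})\varphi_{i,j}]$, so maximizing the TDR amounts to maximizing $\E_{\theta_0}[\sum_{(i,j)}(1-\ell_{i,j})\varphi_{i,j}]$, and the constraint $\maFDR(\theta_0,\varphi)\le\alpha$ reads $\E_{\theta_0}[\sum_{(i,j)}(\ell_{i,j}-\alpha)\varphi_{i,j}]\le 0$ (and it holds trivially, as an equality, when $\varphi\equiv 0$ a.s., in which case the statement is immediate). The same computation applied to $\varphi^*$ yields $\E_{\theta_0}[\sum_{(i,j)}\ell_{i,j}\varphi^*_{i,j}]-\alpha\,\E_{\theta_0}[\sum_{(i,j)}\varphi^*_{i,j}]=(\Q_{\theta_0}(\theta_0,t^\star)-\alpha)\,mF_{\theta_0}(\theta_0,t^\star)=0$.

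Finally I would run the Lagrangian argument with $\lambda=(1-t^\star)/(t^\star-\alpha)$, which lies in $(0,\infty)$ because $\alpha<t^\star<1$. The algebraic identity $(1-\ell)-\lambda(\ell-\alpha)=(1+\lambda)(t^\star-\ell)$ shows that, for every realisation and every $(i,j)$, the value $v=\varphi^*_{i,j}$ maximizes $v\mapsto v\,[(1-\ell_{i,j})-\lambda(\ell_{i,j}-\alpha)]$ over $v\in\{0,1\}$. Hence, for any feasible $\varphi$,
\begin{align*}
\E_{\theta_0}\!\left[\sum_{(i,j)\in\cA}(1-\ell_{i,j})\varphi_{i,j}\right]
&\le \E_{\theta_0}\!\left[\sum_{(i,j)\in\cA}\big((1-\ell_{i,j})-\lambda(\ell_{i,j}-\alpha)\big)\varphi_{i,j}\right]\\
&\le \E_{\theta_0}\!\left[\sum_{(i,j)\in\cA}\big((1-\ell_{i,j})-\lambda(\ell_{i,j}-\alpha)\big)\varphi^*_{i,j}\right]\\
&=\E_{\theta_0}\!\left[\sum_{(i,j)\in\cA}(1-\ell_{i,j})\varphi^*_{i,j}\right],
\end{align*}
the first inequality using $\lambda\ge 0$ and feasibility, the second the pointwise optimality of $\varphi^*$, and the equality the tightness $\Q_{\theta_0}(\theta_0,t^\star)=\alpha$ from the first step (via the last display of the previous paragraph). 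Dividing by $m\pione$ gives $\TDR(\theta_0,\varphi)\le\TDR(\theta_0,\varphi^*)$.

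The step I expect to be the main obstacle is the complementary-slackness identity $\Q_{\theta_0}(\theta_0,T_{\theta_0}(\alpha))=\alpha$: this is exactly where the hypotheses $\alpha>\alpha_*(\theta_0)$ (keeping $t^\star$ strictly to the right of the possibly-jumping flat part near $t_1(\theta_0)$) and $\alpha<\pizero$ (keeping $t^\star$ strictly below the terminal flat part $[t_2(\theta_0),1]$, where $\Q_{\theta_0}(\theta_0,\cdot)\equiv\pizero$) are used, through the monotonicity and continuity structure of $\Q_{\theta_0}(\theta_0,\cdot)$ supplied by Lemma~\ref{lem:functionQvalue}. Without this identity the last line above degrades to $\le\E_{\theta_0}[\sum(1-\ell_{i,j})\varphi^*_{i,j}]-\lambda(\Q_{\theta_0}(\theta_0,t^\star)-\alpha)\,\E_{\theta_0}[\sum\varphi^*_{i,j}]$, and since $\Q_{\theta_0}(\theta_0,t^\star)\le\alpha$ the correction term is nonnegative, so the bound points the wrong way; everything else is bookkeeping once the conditioning argument of the second paragraph is in place.
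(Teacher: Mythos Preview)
Your proof is correct and follows essentially the same route as the paper's. Both arguments hinge on (i) the tightness $\Q_{\theta_0}(\theta_0,T_{\theta_0}(\alpha))=\alpha$ obtained from Lemma~\ref{lem:functionQvalue} when $\alpha\in(\alpha_*,\pizero)$, (ii) rewriting the MFDR constraint and the TDR through the $\ell$-values via the tower rule, and (iii) a pointwise Neyman--Pearson comparison with multiplier $(t^\star-\alpha)/(1-t^\star)$ (the paper) or its reciprocal $\lambda=(1-t^\star)/(t^\star-\alpha)$ (your version); your identity $(1-\ell)-\lambda(\ell-\alpha)=(1+\lambda)(t^\star-\ell)$ is exactly the paper's inequality \eqref{equ:SC} in Lagrangian form.
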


\begin{proof}
We follow an argument inspired from the proof of Theorem~1 in \cite{CSW2019}.
Denote $\ell_{i,j}(\theta_0)$ by $\ell_{i,j}$ for short.
First, note that by Lemma~\ref{lem:functionQvalue}, $\maFDR(\theta_0,\varphi^*)= \Q_{\theta_0}(\theta_0,T_{\theta_0}(\alpha)) = \alpha$ by definition of $T_{\theta_0}(\alpha)\in (t_1(\theta_0),t_2(\theta_0))$, see \eqref{def:optimalthreshold}.
This gives
$$
\E_{\theta_0}\left[ \sum_{(i,j)\in\mathcal{A}}  (\ell_{i,j}-\alpha) \varphi^*_{i,j} \right] = \E_{\theta_0}\left[ \sum_{(i,j)\in\mathcal{A}}  \ell_{i,j} \varphi^*_{i,j}\right] - \alpha \E_{\theta_0}\left[ \sum_{(i,j)\in\mathcal{A}}  \varphi^*_{i,j}\right] = 0.
$$
Also, since $\Psi:x \in [0,1) \mapsto (x-\alpha)/(1-x)$ is continuous increasing and defines a one to one map from $[0,1)$ to $[-\alpha,+\infty)$, we have, almost surely,
$$
\varphi^*_{i,j} = \ind{\Psi(\ell_{i,j})\leq \Psi(T_{\theta_0}(\alpha)) } = \mathds{1}\left\{\ell_{i,j}-\alpha\leq \frac{T_{\theta_0}(\alpha)-\alpha}{1-T_{\theta_0}(\alpha)} (1-\ell_{i,j}) \right\}.
$$
As a result,  it can then be checked that for any procedure $\varphi$,
\begin{equation}\label{equ:SC}
(\ell_{i,j}-\alpha) (\varphi_{i,j}^*-\varphi_{i,j}) \leq \frac{T_{\theta_0}(\alpha)-\alpha}{1-T_{\theta_0}(\alpha)}  (1-\ell_{i,j}) (\varphi_{i,j}^*-\varphi_{i,j}).
\end{equation}
Indeed, this is true if $\varphi_{i,j}^*=1 $ and $\varphi_{i,j}=0$. If $\varphi_{i,j}^*=\varphi_{i,j}$ this obviously holds. If  $\varphi_{i,j}^*=0 $ and $\varphi_{i,j}=1$, then $\ell_{i,j}-\alpha\geq \frac{T_{\theta_0}(\alpha)-\alpha}{1-T_{\theta_0}(\alpha)} (1-\ell_{i,j})$ and the relation is also true.
Hence, 
provided that $\maFDR(\theta_0,\varphi)\leq \alpha$,
\begin{align*}
 \E_{\theta_0}\left[ \sum_{(i,j)\in\mathcal{A}}  (\ell_{i,j}-\alpha) \varphi_{i,j}\right]\leq 0 = \E_{\theta_0}\left[ \sum_{(i,j)\in\mathcal{A}}  (\ell_{i,j}-\alpha) \varphi^*_{i,j}\right]
\end{align*}
This implies
\begin{align*}
0\leq \E_{\theta_0}\left[\sum_{(i,j)\in\mathcal{A}} (\ell_{i,j}-\alpha) (\varphi_{i,j}^*-\varphi_{i,j}) \right]. 
\end{align*}
Hence, by \eqref{equ:SC}, we get
\begin{align*}
0\leq\frac{T_{\theta_0}(\alpha)-\alpha}{1-T_{\theta_0}(\alpha)} \: \E_{\theta_0}\left[\sum_{(i,j)\in\mathcal{A}} (1-\ell_{i,j}) (\varphi_{i,j}^*-\varphi_{i,j})  \right],
\end{align*}
which in turn gives $\TDR({\theta_0},\varphi^*) \geq \TDR({\theta_0},\varphi)$, because $T_{\theta_0}(\alpha)\in (\alpha,1)$.
\end{proof}

\begin{lemma}\label{lemqvaluedef}
Let  Assumption~\ref{cont} be true, $\theta_0\in\Theta$, 
$\alpha_*=\alpha_*(\theta_0)$ given by \eqref{def:alphastar}, $\alpha\in \mathcal{K} \subset (\alpha_*,\pizero)$ for some compact interval $\mathcal{K}$,  $\kappa=\kappa(\theta_0,\alpha)$ given by \eqref{equ:kappa}, the modulus $\mathcal{W}_{\theta_0,\q}$ defined by \eqref{equmodulusq} and the modulus $\mathcal{W}_{T,\q_1}$ defined by \eqref{equmodulusq1}.
Then there exists $e=e(\theta_0,\alpha,Q)\in(0,1)$ such that for all $\eps\leq e$, letting $\eta(\eps)=8 \kappa^{-1} (\mathcal{W}_{\theta_0,\q}(\eps)+3Q^2\eps)$, we have $\min \mathcal{K} \leq \alpha-\eta(\eps)$ and $\alpha+\eta(\eps) \leq \max \mathcal{K}$ and for any $\theta,\theta'\in\Theta$ with $\|{\theta}-\theta_0\|_\infty\leq \eps$ and $\|{\theta'}-\theta_0\|_\infty\leq \eps$,  we have 
\begin{align}
& \sup_{t\in T_{\theta_0}(\mathcal K)}\left|F_{0,\theta'}(\theta,t)-F_{0,\theta_0}(\theta_0,t) \right|\vee \left|F_{1,\theta'}(\theta,t)-F_{1,\theta_0}(\theta_0,t) \right|\vee\left|F_{\theta'}(\theta,t)-F_{\theta_0}(\theta_0,t) \right| \nonumber\\
&\hspace{5cm}\leq 2\mathcal{W}_{\theta_0,\q}(\eps)+6Q^2\eps;\label{majF01}\\
& \sup_{t\in T_{\theta_0}(\mathcal K)}\left|\Q_{\theta'}(\theta,t)-\Q_{\theta_0}(\theta_0,t) \right|\leq \eta(\eps);\label{majQ}\\
&T_{\theta_0}(\min \mathcal{K}) \leq  T_{\theta_0}(\alpha-\eta(\eps)) \leq T_{\theta}(\alpha)\leq  T_{\theta_0}(\alpha+\eta(\eps))\leq T_{\theta_0}(\max \mathcal{K})\label{majT}.
\end{align}
Moreover, for $t\in T_{\theta_0}(\mathcal{K})$ with $|t-T_{\theta_0}(\alpha)|\leq \eps$, we have
\begin{align}
 \left|F_{1,\theta_0}(\theta_0,t)-F_{1,\theta_0}(\theta_0, T_{\theta_0}(\alpha)) \right| \leq \mathcal{W}_{T,\q_1}(\eps)\label{majF1}.
\end{align}
\end{lemma}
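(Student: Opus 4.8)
The plan is to establish the five displayed inequalities in the order in which they are stated, using at each step the monotonicity and continuity structure of $\Q_{\theta_0}(\theta_0,\cdot)$ recorded in Lemma~\ref{lem:functionQvalue}, together with the continuity moduli of Section~\ref{sec:assumption}. The very first task is the range statement for $\eta(\eps)$: since $\alpha$ lies in the interior of the compact interval $\mathcal{K}$, fix $\delta_0>0$ with $[\alpha-\delta_0,\alpha+\delta_0]\subset\mathcal{K}$; because $\mathcal{W}_{\theta_0,\q}(u)\to 0$ as $u\to 0$ (Remark~\ref{rem:diff}) we have $\eta(\eps)\to 0$, so choosing $e$ small enough that $\eta(e)<\delta_0$ yields $\min\mathcal{K}\leq\alpha-\eta(\eps)$ and $\alpha+\eta(\eps)\leq\max\mathcal{K}$ for every $\eps\leq e$ (the strictness $\eta(e)<\delta_0$ will be reused in the proof of \eqref{majT}).

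For \eqref{majF01} I would write, using \eqref{equ:F0}, the difference $F_{0,\theta'}(\theta,t)-F_{0,\theta_0}(\theta_0,t)$ as a sum over $q,\l$ and split each summand as $\pi'_q\pi'_\l(1-w'_{q,\l})\bigl[\q_0(t,q,\l;\theta',\theta)-\q_0(t,q,\l;\theta_0,\theta_0)\bigr]+\bigl[\pi'_q\pi'_\l(1-w'_{q,\l})-\pi_q\pi_\l(1-w_{q,\l})\bigr]\q_0(t,q,\l;\theta_0,\theta_0)$. For $t\in T_{\theta_0}(\mathcal{K})$ the first bracket is at most $\mathcal{W}_{\theta_0,\q}(\eps)$ by \eqref{equmodulusq}, and since the weights $\pi'_q\pi'_\l(1-w'_{q,\l})$ are nonnegative with sum $\pizero(\theta')\leq 1$, the total first contribution is at most $\mathcal{W}_{\theta_0,\q}(\eps)$; a telescoping estimate for a product of three factors lying in $[0,1]$ bounds $|\pi'_q\pi'_\l(1-w'_{q,\l})-\pi_q\pi_\l(1-w_{q,\l})|$ by $3\eps$, and $\q_0\leq 1$ together with the $Q^2$ terms bounds the second contribution by $3Q^2\eps$. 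This gives $|F_{0,\theta'}(\theta,t)-F_{0,\theta_0}(\theta_0,t)|\leq\mathcal{W}_{\theta_0,\q}(\eps)+3Q^2\eps$; the identical argument gives the same bound for $F_1$, and summing gives $2\mathcal{W}_{\theta_0,\q}(\eps)+6Q^2\eps$ for $F$, which dominates the three quantities.

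For \eqref{majQ} I use $\Q=F_0/F$, the essential point being a lower bound on the denominator: $F_{\theta_0}(\theta_0,\cdot)$ is nondecreasing (it is an expected empirical distribution function) and $\kappa=F_{\theta_0}(\theta_0,T_{\theta_0}(\min\mathcal{K}))$ by \eqref{equ:kappa}, \eqref{equ:mF1} and \eqref{equ:F}, so $F_{\theta_0}(\theta_0,t)\geq\kappa>0$ for all $t\in T_{\theta_0}(\mathcal{K})$; shrinking $e$ further so that $2\mathcal{W}_{\theta_0,\q}(e)+6Q^2e\leq\kappa/2$, the bound \eqref{majF01} forces $F_{\theta'}(\theta,t)\geq\kappa/2$ on $T_{\theta_0}(\mathcal{K})$. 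Then the elementary inequality $|a/b-c/d|\leq|a-c|/b+|d-b|/b$, valid for $0<a\leq b$ and $0<c\leq d$ (write $a/b-c/d=(a-c)/b+(c/d)(d-b)/b$ and use $c/d\leq 1$; here $F_0\leq F$), combined with \eqref{majF01}, gives $|\Q_{\theta'}(\theta,t)-\Q_{\theta_0}(\theta_0,t)|\leq 2\kappa^{-1}(3\mathcal{W}_{\theta_0,\q}(\eps)+9Q^2\eps)\leq\eta(\eps)$ on $T_{\theta_0}(\mathcal{K})$. To obtain \eqref{majT} I apply this with $\theta'=\theta$; recall $\Q_{\theta_0}(\theta_0,\cdot)$ is continuous and strictly increasing on $T_{\theta_0}(\mathcal{K})\subset(t_1(\theta_0),t_2(\theta_0))$ with inverse $T_{\theta_0}$ there, while $\Q_\theta(\theta,\cdot)$ is nondecreasing on $[0,1]$ by Lemma~\ref{lem:functionQvalue}. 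Evaluating at $t^-=T_{\theta_0}(\alpha-\eta(\eps))$ gives $\Q_\theta(\theta,t^-)\leq(\alpha-\eta(\eps))+\eta(\eps)=\alpha$, hence $T_\theta(\alpha)\geq t^-$; for $t\in T_{\theta_0}(\mathcal{K})$ with $t>t^+:=T_{\theta_0}(\alpha+\eta(\eps))$ strict monotonicity of $\Q_{\theta_0}(\theta_0,\cdot)$ gives $\Q_\theta(\theta,t)>\alpha$, and since $\eta(e)<\delta_0$ one also has $\Q_\theta(\theta,T_{\theta_0}(\max\mathcal{K}))>\alpha$, so nondecreasingness extends $\Q_\theta(\theta,t)>\alpha$ to all $t>t^+$, whence $T_\theta(\alpha)\leq t^+$; the outer inequalities are then monotonicity of $T_{\theta_0}$ applied to the range statement.

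Finally, \eqref{majF1} follows directly from \eqref{equ:mF1}: the difference $F_{1,\theta_0}(\theta_0,t)-F_{1,\theta_0}(\theta_0,T_{\theta_0}(\alpha))$ is a nonnegative combination, with weights $\pi_q\pi_\l w_{q,\l}$ of total mass $\pione\leq 1$, of the increments $\q_1(t,q,\l;\theta_0,\theta_0)-\q_1(T_{\theta_0}(\alpha),q,\l;\theta_0,\theta_0)$, each of which is at most $\mathcal{W}_{T,\q_1}(\eps)$ by \eqref{equmodulusq1} once $t\in T_{\theta_0}(\mathcal{K})$ and $|t-T_{\theta_0}(\alpha)|\leq\eps$. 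The hard part is not any single estimate but the interlocking choice of $e$: the denominator bound needed for \eqref{majQ} depends both on \eqref{majF01} and on $e$ being small relative to $\kappa$, and the range statement must be in force before \eqref{majT} can sandwich $T_\theta(\alpha)$, so $e=e(\theta_0,\alpha,Q)$ must be fixed once and for all to meet all these smallness requirements simultaneously; everything else is bookkeeping.
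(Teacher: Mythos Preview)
Your proof is correct and follows essentially the same approach as the paper: the same additive splitting for \eqref{majF01}, the same denominator lower bound $F_{\theta'}(\theta,t)\geq\kappa/2$ (after shrinking $e$) to pass to the ratio in \eqref{majQ}, and the same inversion argument for \eqref{majT} via evaluating \eqref{majQ} at $t^\pm=T_{\theta_0}(\alpha\pm\eta(\eps))$. Your treatment of the upper bound in \eqref{majT} is in fact more careful than the paper's, which simply asserts that $\Q_\theta(\theta,T_{\theta_0}(\alpha+\eta(\eps)))\geq\alpha$ ``gives the result by definition of the pseudo-inverse'' without spelling out the strict-inequality step for $t>t^+$ that you supply.
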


\begin{proof}
To prove \eqref{majF01}, we have by \eqref{equmodulusq} and \eqref{equ:mF1}, for all $t\in T_{\theta_0}(\mathcal K)$,
\begin{align*}
&\left|F_{1,\theta'}(\theta,t)-F_{1,\theta_0}(\theta_0,t) \right|\nonumber\\
&=\left| \sum_{q,\l} \pi_q(\theta') \pi_\l(\theta') w_{q,\l}(\theta') \: \q_1(t,q,\l; \theta',\theta) - 
\sum_{q,\l} \pi_q(\theta_0) \pi_\l(\theta_0) w_{q,\l}(\theta_0) \: \q_1(t,q,\l; \theta_0,\theta_0) \right|\\
&\leq  \sum_{q,\l} \pi_q(\theta') \pi_\l(\theta') w_{q,\l}(\theta') \: \left| \q_1(t,q,\l; \theta',\theta) - \frac{ \pi_q(\theta_0) \pi_\l(\theta_0) w_{q,\l}(\theta_0)}{\pi_q(\theta') \pi_\l(\theta') w_{q,\l}(\theta')}
 \: \q_1(t,q,\l; \theta_0,\theta_0) \right|\\
 &\leq \sup_{q,\l} \left| \q_1(t,q,\l; \theta',\theta) - \q_1(t,q,\l; \theta_0,\theta_0) \right| +  \sum_{q,\l} \left|\pi_q(\theta') \pi_\l(\theta') w_{q,\l}(\theta') \: -  \pi_q(\theta_0) \pi_\l(\theta_0) w_{q,\l}(\theta_0) \right|\\
 &\leq \mathcal{W}_{\theta_0,\q}(\eps)+3Q^2\eps.
\end{align*}
Similarly, the latter bound is also valid for $\left|F_{1,\theta'}(\theta,t)-F_{1,\theta_0}(\theta_0,t) \right|$. Since $F_{\theta'}(\theta,t)=F_{0,\theta'}(\theta,t)+F_{1,\theta'}(\theta,t)$, this proves \eqref{majF01}. The proof of \eqref{majF1} follows similarly.

Let us now establish \eqref{majQ}. First, by \eqref{majF01}, and since ${F}_{\theta_0}(\theta_0,\min T_{\theta_0}(\mathcal K))=\kappa(\theta_0,\alpha)>0$, we have for all $t\in T_{\theta_0}(\mathcal K)$, $F_{\theta'}(\theta,t)\geq \kappa/2$ by choosing $e=e(\theta_0,\alpha)$ small enough. Hence,
\begin{align*}
\left|\Q_{\theta'}(\theta,t)-\Q_{\theta_0}(\theta_0,t) \right|&= \left| \frac{{F}_{0,\theta'}(\theta,t)}{F_{\theta'}(\theta,t)}-\frac{{F}_{0,\theta_0}(\theta_0,t)}{F_{\theta_0}(\theta_0,t)}\right|\\
&\leq \left| \frac{{F}_{0,\theta'}(\theta,t)}{F_{\theta'}(\theta,t)}- \frac{{F}_{0,\theta_0}(\theta_0,t)}{F_{\theta'}(\theta,t)}\right|+{F}_{0,\theta_0}(\theta_0,t)  \left|\frac{1}{F_{\theta'}(\theta,t)} -\frac{1}{F_{\theta_0}(\theta_0,t)}\right|\\
&\leq \frac{ \left|{F}_{0,\theta'}(\theta,t)-{F}_{0,\theta_0}(\theta_0,t)\right|}{\kappa/2}+ \frac{{F}_{0,\theta_0}(\theta_0,t)}{{F}_{\theta_0}(\theta_0,t)} \frac{\left|{F}_{\theta'}(\theta,t)-{F}_{\theta_0}(\theta_0,t)\right|}{\kappa/2}\\
&\leq 4 \kappa^{-1} (2\mathcal{W}_{\theta_0,\q}(\eps)+6Q^2\eps),
\end{align*}
which proves \eqref{majQ} by using again  \eqref{majF01}.

Let us finally prove \eqref{majT}. The relation \eqref{majQ}, used with $\theta=\theta'=\theta_0$ gives   for all $t\in T_{\theta_0}(\mathcal K)$,
\begin{align}\label{equ-intermlemqvalue}
\Q_{\theta_0}(\theta_0,t)-\eta(\eps) \leq \Q_{{\theta}}({\theta}, t) \leq \Q_{\theta_0}(\theta_0,t)+\eta(\eps) .
\end{align}
Furthermore, applying \eqref{equ-intermlemqvalue} for $t=T_{\theta_0}(\alpha+\eta(\eps))$ and $t=T_{\theta_0}(\alpha-\eta(\eps))$ yields
$$
\alpha\leq \Q_{{\theta}}({\theta}, T_{\theta_0}(\alpha+\eta(\eps)))\mbox{ and }  \Q_{{\theta}}({\theta}, T_{\theta_0}(\alpha-\eta(\eps)))\leq \alpha,
$$
which gives the result by definition of the pseudo-inverse \eqref{def:optimalthreshold}. 
\end{proof}

\begin{lemma}[Concentration of the FDP process]\label{lem:concFDPprocess}
There exists universal constants $c_1,c_1',c_2,c_2'>0$ such that the following holds. Let Assumption~\ref{AssumptionIk} be true for some $K>0$.
Let $\theta_0=(\pi,w,\nu_0,\nu)\in\Theta$, $\pimin=\min_{q}\{\pi_q(\theta_0)\}$, $\wmin=\min_{q,\l} \{w_{q,\l}(\theta_0)\}$ and $\wmax=\max_{q,\l} \{w_{q,\l}(\theta_0)\}.$
Then for all $x>0$ with $x<\pimin^2\wedge (1-\wmax)$ and $y>0$,
\begin{align*}
&\P_{\theta_0}\left(\sup_{\substack{\theta\in\Theta, t \in[0,1]\\ {F}_{\theta_0}(\theta,t)\geq y }}\left|\FDP(\theta,t)-\Q_{\theta_0}(\theta,t) \right|> x \right)\\&\leq  c_1 Q^2 \e^{-c_1'\lfloor n/2\rfloor y^2 x^2 / Q^4} + c_2 K  Q^2 \e^{- c_2' m \pimin^2 (1-\wmax) y^2 x^2/K^2}.
\end{align*}
\end{lemma}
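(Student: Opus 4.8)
The plan is to transfer the desired bound onto uniform control of the empirical processes $\wh F_0$ and $\wh F$ around their means $F_{0,\theta_0}$ and $F_{\theta_0}$, and to get that control from two independent sources: Hoeffding's inequality for the latent group sizes (producing the $\lfloor n/2\rfloor$ term) and a Dvoretzky--Kiefer--Wolfowitz inequality within each pair of groups (producing the $m$ term, the reduction from unions of $K$ intervals to a Kolmogorov--Smirnov statistic accounting for the factor $K$ and the $K^2$ in the exponent). We may assume $y\le1$, since $F_{\theta_0}(\theta,t)\le\pizero+\pione=1$ makes the supremum empty otherwise. First, a deterministic reduction: as $\FDP=\wh F_0/\wh F$, $\Q_{\theta_0}=F_{0,\theta_0}/F_{\theta_0}$ and $F_{0,\theta_0}\le F_{\theta_0}$, one has for $\wh F(\theta,t)>0$
\[
\big|\FDP(\theta,t)-\Q_{\theta_0}(\theta,t)\big|\le \frac{\big|\wh F_0(\theta,t)-F_{0,\theta_0}(\theta,t)\big|+\big|\wh F(\theta,t)-F_{\theta_0}(\theta,t)\big|}{\wh F(\theta,t)}.
\]
Hence on the event where $\sup_{\theta,t}|\wh F_0-F_{0,\theta_0}|\le xy/8$ and $\sup_{\theta,t}|\wh F-F_{\theta_0}|\le xy/8$, every $(\theta,t)$ with $F_{\theta_0}(\theta,t)\ge y$ satisfies $\wh F(\theta,t)\ge 3y/4$, hence $|\FDP(\theta,t)-\Q_{\theta_0}(\theta,t)|\le x/2<x$. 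Up to a union bound and a mild adjustment of the universal constants, it therefore suffices to bound $\P_{\theta_0}(\sup_{\theta,t}|\wh F_0-F_{0,\theta_0}|>xy/8)$ and its twin for $\wh F$.

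\textbf{Reduction to empirical c.d.f.'s, conditionally on $Z$.} Condition on the labels $Z$. Write $N_q=\#\{i:Z_i=q\}$ and, for a pair of groups, $n^{(q,\l)}=\#\{(i,j)\in\cA:\{Z_i,Z_j\}=\{q,\l\}\}$, so that $\sum_{q\le\l}n^{(q,\l)}=m$, $n^{(q,\l)}=N_qN_\l$ for $q\ne\l$, $n^{(q,q)}=\binom{N_q}{2}$, and group the edges by the pair $\{Z_i,Z_j\}$. By Assumption~\ref{AssumptionIk}, $\ind{\l_{i,j}(\theta)\le t}=\ind{X_{i,j}\in I_{Z_i,Z_j,\theta,t}}$ for some $I_{Z_i,Z_j,\theta,t}\in\mathcal I_K$, and the indicator of any $I\in\mathcal I_K$ is a combination with coefficients in $\{-1,+1\}$ of at most $2K$ half-line indicators. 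Introducing the marked variables $\tilde X_{i,j}=X_{i,j}$ if $A_{i,j}=0$ and $\tilde X_{i,j}=+\infty$ if $A_{i,j}=1$, one has $(1-A_{i,j})\ind{X_{i,j}\in I}=\ind{\tilde X_{i,j}\in I}$ for $I\subseteq\R$; conditionally on $Z$, within pair $(q,\l)$ the $\tilde X_{i,j}$ are i.i.d.\ with c.d.f.\ $H^{(q,\l)}=(1-w_{q,\l})G_0$ ($G_0$ the c.d.f.\ of $g_{0,\nu_0}$) and the $X_{i,j}$ are i.i.d.\ with the mixture c.d.f.\ $G^{(q,\l)}=(1-w_{q,\l})G_0+w_{q,\l}G_{q,\l}$ ($G_{q,\l}$ the c.d.f.\ of $g_{\nu_{q,\l}}$). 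Splitting $\wh F_0-F_{0,\theta_0}=(\wh F_0-\E[\wh F_0\mid Z])+(\E[\wh F_0\mid Z]-F_{0,\theta_0})$, the above gives, with $\wh H^{(q,\l)}$ the empirical c.d.f.\ of the $\tilde X_{i,j}$ in pair $(q,\l)$,
\[
\sup_{\theta,t}\big|\wh F_0-\E[\wh F_0\mid Z]\big|\ \le\ 2K\sum_{q\le\l}\frac{n^{(q,\l)}}{m}\,\big\|\wh H^{(q,\l)}-H^{(q,\l)}\big\|_\infty\ \le\ 2K\max_{q\le\l}\big\|\wh H^{(q,\l)}-H^{(q,\l)}\big\|_\infty,
\]
the last step using $\sum_{q\le\l}n^{(q,\l)}/m=1$; the same holds for $\wh F-F_{\theta_0}$ with $\wh G^{(q,\l)},G^{(q,\l)}$ replacing $\wh H^{(q,\l)},H^{(q,\l)}$. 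Moreover, since $\E[n^{(q,\l)}/m]$ equals exactly the weight of the $(q,\l)$ term in $F_{0,\theta_0}$, one has $|\E[\wh F_0\mid Z]-F_{0,\theta_0}|\le\sum_{q\le\l}|n^{(q,\l)}-\E\,n^{(q,\l)}|/m$, a quantity free of $(\theta,t)$, and likewise for $\wh F$.

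\textbf{The two tail bounds, and conclusion.} For the group-count part, $n^{(q,\l)}/m$ is a degree-two U-statistic of the i.i.d.\ labels with $[0,1]$-valued kernel, so Hoeffding's inequality gives $\P_{\theta_0}(|n^{(q,\l)}/m-\E\,n^{(q,\l)}/m|>s)\le 2\e^{-2\lfloor n/2\rfloor s^2}$; taking $s$ of order $xy/Q^2$ and a union bound over the $\le Q^2$ pairs yields a term $c_1Q^2\e^{-c_1'\lfloor n/2\rfloor y^2x^2/Q^4}$, and on its complement one has in addition $n^{(q,\l)}\ge\tfrac12 m\pimin^2$ for all $q,\l$ (using $\E\,n^{(q,\l)}/m\ge\pimin^2$, and $x<\pimin^2$ to absorb the rare case $n^{(q,\l)}=0$). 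For the empirical-c.d.f.\ part, DKW gives $\P(\|\wh H^{(q,\l)}-H^{(q,\l)}\|_\infty>u\mid Z)\le 2\e^{-2n^{(q,\l)}u^2}$, hence $\le 2\e^{-m\pimin^2u^2}$ on the event just described; taking $u$ of order $xy/K$ and a union bound over the $\le Q^2$ pairs gives $\lesssim KQ^2\e^{-c_2'm\pimin^2 y^2x^2/K^2}$, which is at most $c_2KQ^2\e^{-c_2'm\pimin^2(1-\wmax)y^2x^2/K^2}$ since $1-\wmax\le1$ (the extra factor $K$ is harmless slack). Adding these bounds and the identical contributions from $\wh F$ gives the claim, the constants being universal (for $n$ so small that the exponentials are not below $1$, the stated right-hand side is $\ge1$ once $c_1,c_2$ are taken large enough).

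\textbf{Main obstacle.} The delicate point is the middle step: carrying the supremum over the infinite-dimensional parameter $(\theta,t)$ down to a finite collection of Kolmogorov--Smirnov statistics — which Assumption~\ref{AssumptionIk} and the $2K$-interval identity make possible — while handling the \emph{random} per-pair sample sizes $n^{(q,\l)}$, which must be shown to be of the correct order $\asymp m\pimin^2$ with probability absorbed into the same Hoeffding bound, and allocating the accuracy $xy$ so that the group-count error carries the exponent $\lfloor n/2\rfloor y^2x^2/Q^4$ (after being divided among the $Q^2$ pairs) whereas the c.d.f.\ error carries $m\pimin^2 y^2x^2/K^2$ with no extra $Q$-factor, the latter because the weights $n^{(q,\l)}/m$ there already sum to one.
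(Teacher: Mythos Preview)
Your proof is correct and follows the same overall architecture as the paper (reduce the ratio to uniform control of $\wh F_0$ and $\wh F$, then control each via a group-count U-statistic term plus a DKW term), but you take a genuinely different route in the second step.

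The paper conditions on $(A,Z)$ and uses a three-term split (see the proof of Lemma~\ref{lem:concF01}): (I) binomial concentration of $m_{0,q,\l}/m_{q,\l}$ around $1-w_{q,\l}$; (II) U-statistic concentration of $m_{q,\l}/m$ around $\pi_q\pi_\l$; (III) DKW applied to the null edges only, with random sample size $m_{0,q,\l}$. The factor $(1-\wmax)$ in the exponent is forced by (III), because the DKW sample size there is $m_{0,q,\l}\gtrsim m\pimin^2(1-\wmax)$. You instead condition on $Z$ only, introduce the marked variables $\tilde X_{i,j}$ to absorb the factor $(1-A_{i,j})$ into the c.d.f., and apply DKW to all $n^{(q,\l)}$ edges of the pair. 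This collapses (I) and (III) into a single DKW step with larger sample size $n^{(q,\l)}\gtrsim m\pimin^2$, so your bound is actually tighter (no $(1-\wmax)$), which you then relax to match the statement. Your route is shorter; the price is that your marked variables carry an atom at $+\infty$, so you must invoke DKW in its general form (valid for arbitrary distributions, as in Massart 1990) rather than the continuous-variable version stated in Lemma~\ref{lem:donskerI}. Both approaches handle the sup over $(\theta,t)$ the same way, via Assumption~\ref{AssumptionIk} and the $2K$-half-line decomposition.
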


\begin{proof}
This is a direct application of Lemma~\ref{lem:concF01} (used with $x$ replaced by $xy/4$) and of the relation
\begin{align*}
&\left|\frac{\wh{F}_0(\theta,t) }{ \wh{F}(\theta,t)}-  \frac{{F}_{0,\theta_0}(\theta,t) }{{F}_{\theta_0}(\theta,t) } \right|\\
&\leq \left|\frac{\wh{F}_0(\theta,t) - {F}_{0,\theta_0}(\theta,t) }{ \wh{F}(\theta,t)} \right| + \frac{{F}_{0,\theta_0}(\theta,t) }{\wh{F}(\theta,t) {F}_{\theta_0}(\theta,t)} |\wh{F}(\theta,t)-{F}_{\theta_0}(\theta,t)|\\
&\leq \left|\frac{\wh{F}_0(\theta,t) - {F}_{0,\theta_0}(\theta,t) }{ \wh{F}(\theta,t)} \right| + \frac{1}{\wh{F}(\theta,t) } |\wh{F}(\theta,t)-{F}_{\theta_0}(\theta,t)|\\
&\leq \frac{2}{{F}_{\theta_0}(\theta,t)}\left( \left|\wh{F}_0(\theta,t) - {F}_{0,\theta_0}(\theta,t) \right| + \left|\wh{F}(\theta,t)-{F}_{\theta_0}(\theta,t)\right|\right)\\
&\leq \frac{4}{y}\left( \left|\wh{F}_0(\theta,t) - {F}_{0,\theta_0}(\theta,t) \right| + \left|\wh{F}(\theta,t)-{F}_{\theta_0}(\theta,t)\right|\right),
\end{align*}
which holds provided that ${F}_{\theta_0}(\theta,t)/2\geq y/2$ and $\wh{F}(\theta,t)\geq {F}_{\theta_0}(\theta,t)/2$. The first relation comes by assumption, the second relation comes from the concentration of $\wh{F}(\theta,t)$, because $xy/4\leq {F}_{\theta_0}(\theta,t)/2$.
\end{proof}

\subsection{Auxiliary results}

\begin{lemma}\label{lem:concF01}
In the setting of Lemma~\ref{lem:concFDPprocess}, 
for all $x>0$ with $x<\pimin^2\wedge (1-\wmax)$,
\begin{align*}
&\P_{\theta_0}\left(\sup_{\theta\in\Theta, t \in[0,1]}\left|\wh{F}_{0}(\theta,t)-{F}_{0,\theta_0}(\theta,t) \right|> x \right)\\
&\hspace{2cm}\leq  2 Q^2 \e^{-2\lfloor n/2\rfloor x^2 /(9 Q^4)} + 6 K  Q^2 \e^{- m \pimin^2 (1-\wmax) x^2/(9K^2)}.
\end{align*}
For all $x>0$ with $x<\pimin^2\wedge \wmin$,
\begin{align*}
\P_{\theta_0}\left(\sup_{\theta\in\Theta, t \in[0,1]}\left|\wh{F}_{1}(\theta,t)-{F}_{1,\theta_0}(\theta,t) \right|> x \right)&\leq  2 Q^2 \e^{-2\lfloor n/2\rfloor x^2 /(9 Q^4)} + 6 K  Q^2 \e^{- m \pimin^2 \wmin x^2/(9K^2)}.
\end{align*}
Finally, for all $x>0$ with $x<\pimin^2$,
\begin{align*}
\P_{\theta_0}\left(\sup_{\theta\in\Theta, t \in[0,1]}\left|\wh{F}(\theta,t)-{F}_{\theta_0}(\theta,t) \right|> x \right)&\leq  2 Q^2 \e^{-2\lfloor n/2\rfloor x^2 /(9 Q^4)} + 4 K  Q^2  \e^{- m \pimin^2 x^2/4}.
\end{align*}

\end{lemma}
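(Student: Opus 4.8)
# Proof Proposal for Lemma~\ref{lem:concF01}

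\textbf{Overall strategy.} The plan is to bound each supremum by a two-step argument: first reduce the supremum over $\theta \in \Theta$ and $t \in [0,1]$ to a supremum over a much smaller, discrete index set by exploiting Assumption~\ref{AssumptionIk}; then apply a union bound together with a concentration inequality (Hoeffding for bounded sums, and a DKW-type inequality for the empirical-process part). The key observation is that, although $\theta$ and $t$ range over a continuum, the \emph{event} $\{\ell_{i,j}(X,Z,\theta) \le t\} = \{X_{i,j} \in I\}$ depends on $(\theta,t)$ only through the group labels $Z_i, Z_j$ and the set $I \in \mathcal{I}_K$. Conditionally on $Z$, once we group the edges $(i,j)$ by the pair $(Z_i, Z_j) \in \{1,\dots,Q\}^2$, the quantity $\wh{F}_0(\theta,t)$ is a (normalized) sum of terms of the form $(1-A_{i,j})\ind{X_{i,j} \in I}$ over edges in a fixed group-pair class, with $I$ ranging over $\mathcal{I}_K$. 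The complexity of $\mathcal{I}_K$ (a union of $K$ intervals) is governed by a VC-type dimension of order $K$, which is where the factor $K$ (or $\e^{\cdot/K^2}$) in the bound comes from.

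\textbf{Step 1: conditioning and decomposition.} Condition on $Z$. Write $\mathcal{A}_{q,\l} = \{(i,j) \in \mathcal{A} : \{Z_i,Z_j\} = \{q,\l\}\}$ and $m_{q,\l} = |\mathcal{A}_{q,\l}|$, so that $\wh{F}_0(\theta,t) = m^{-1} \sum_{q \le \l} \sum_{(i,j) \in \mathcal{A}_{q,\l}} (1-A_{i,j}) \ind{X_{i,j} \in I_{q,\l}}$ where $I_{q,\l} = I_{q,\l}(Z_i,Z_j,\theta,t) \in \mathcal{I}_K$. Conditionally on $Z$, the pairs $(A_{i,j}, X_{i,j})$ for $(i,j) \in \mathcal{A}$ are independent, with $A_{i,j} \sim \mathcal{B}(w_{Z_i,Z_j})$ and $X_{i,j}$ distributed as the mixture; moreover $\E_{\theta_0}[(1-A_{i,j})\ind{X_{i,j}\in I} \mid Z] = (1-w_{q,\l})\bar{F}_{0,q,\l}(I)$ where $\bar F_{0,q,\l}(I) = \int_I g_{0,\nu_0}$, and summing over $(q,\l)$ recovers ${F}_{0,\theta_0}(\theta,t)$ — \emph{provided} the conditional proportions $m_{q,\l}/m$ are close to their expectations $\pi_q \pi_\l$ (times $2$ if $q \ne \l$). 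This split produces two error sources: (a) fluctuation of $m_{q,\l}/m$ around $\pi_q\pi_\l$, handled by Hoeffding applied to the i.i.d.\ indicators $Z_{i,q}$ (or rather to the edge-membership indicators, which are \emph{not} independent — here one uses the standard SBM trick of splitting the $n$ nodes into $\lfloor n/2 \rfloor$ disjoint pairs to get genuine independence, which explains the $\lfloor n/2 \rfloor$ in the first exponential); and (b) the deviation of $m_{q,\l}^{-1}\sum_{(i,j)\in\mathcal{A}_{q,\l}}[(1-A_{i,j})\ind{X_{i,j}\in I} - (1-w_{q,\l})\bar F_{0,q,\l}(I)]$ \emph{uniformly over} $I \in \mathcal{I}_K$, which is the genuine empirical-process piece.

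\textbf{Step 2: uniform control over $\mathcal{I}_K$.} For fixed $(q,\l)$, conditionally on $Z$ and on $(A_{i,j})$, the variables $X_{i,j}$ for $(i,j) \in \mathcal{A}_{q,\l}$ with $A_{i,j}=0$ are i.i.d.\ with density $g_{0,\nu_0}$. The supremum over $I \in \mathcal{I}_K$ of $|m_{q,\l}^{-1}\sum \ind{X_{i,j} \in I} - \P(X \in I)|$ reduces, since each $I$ is a union of $K$ intervals, to $2K$ times the supremum over half-lines, i.e.\ to the Kolmogorov--Smirnov statistic; by the DKW inequality this is $\le 2\e^{-2 N_{0,q,\l} \delta^2}$ with $N_{0,q,\l}$ the number of non-edges in class $(q,\l)$, which is concentrated near $m_{q,\l}(1-w_{q,\l}) \ge$ (roughly) $m \pimin^2(1-\wmax)$. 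Putting $\delta \asymp x/K$ and summing over the $\le Q^2$ group pairs and over the sub-events gives the $K Q^2 \e^{-m\pimin^2(1-\wmax)x^2/K^2}$ term, up to the explicit constants $9$ in the exponents (which come from splitting $x$ into three roughly equal pieces for the three error sources). The $\wh F_1$ bound is identical with $1-w$ replaced by $w$, hence $\wmin$ in place of $1-\wmax$; and the $\wh F$ bound has no conditional-Bernoulli layer (we sum all $\ind{X_{i,j}\in I}$ regardless of $A_{i,j}$), so the second exponential loses only the $(1-\wmax)$ factor and the union-bound constant is $4$ rather than $6$.

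\textbf{Main obstacle.} The delicate point is the passage from the continuum of $(\theta,t)$ to a \emph{conditionally distribution-free} empirical process. One must argue carefully that, conditionally on $Z$, the collection of events $\{X_{i,j} \in I_{q,\l}(\theta,t)\}$ as $(\theta,t)$ varies is contained in $\{X_{i,j} \in I : I \in \mathcal{I}_K\}$ — this is exactly the content of Assumption~\ref{AssumptionIk} — and then that, conditionally also on $A$, the relevant $X_{i,j}$'s within each group-pair class are i.i.d.\ from a fixed density ($g_{0,\nu_0}$ for the non-edges), so that DKW applies with the correct sample size. The second subtlety is the SBM-specific independence issue for the node-label counts: the edge-membership indicators $\ind{\{Z_i,Z_j\}=\{q,\l\}}$ are dependent across edges sharing a node, so one cannot apply Hoeffding directly to $m_{q,\l}/m$; the standard remedy is to restrict to a matching of the nodes into $\lfloor n/2\rfloor$ disjoint pairs, producing independent summands, at the cost of the factor $\lfloor n/2\rfloor$ rather than $m$ in the first exponential. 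Everything else is bookkeeping: choosing the split of $x$, collecting the union-bound cardinalities $2Q^2$, $6KQ^2$, $4KQ^2$, and verifying the constant $9$ (respectively $4$) in the exponents.
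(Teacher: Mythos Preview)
Your proposal is correct and follows essentially the same approach as the paper: the paper also decomposes $\wh{F}_0$ by group pairs $(q,\l)$, splits the error into three pieces (fluctuation of $m_{q,\l}(Z)/m$ around $\pi_q\pi_\l$ via a $U$-statistic Hoeffding inequality giving the $\lfloor n/2\rfloor$ term, fluctuation of the Bernoulli proportion $m_{0,q,\l}(A,Z)/m_{q,\l}(Z)$ around $1-w_{q,\l}$, and the DKW-type bound of Lemma~\ref{lem:donskerI} over $\mathcal{I}_K$ for the i.i.d.\ $X_{i,j}$'s with $A_{i,j}=0$ conditionally on $(Z,A)$), and takes a union bound over the $Q^2$ classes. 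The only minor imprecision is that your Step~1 description of error~(b) keeps $(1-A_{i,j})$ inside the empirical process, whereas---as you correctly do in Step~2 and as the paper does---one should first condition on $A$ so that the relevant sample is the $m_{0,q,\l}(A,Z)$ edges with $A_{i,j}=0$, on which the $X_{i,j}$ are genuinely i.i.d.\ from $g_{0,\nu_0}$; this is what produces the separate Bernoulli concentration term and explains the three-way split of $x$.
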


\begin{proof}
Let us denote 
\begin{align*}
m_{q,\l}(A,Z)&=\sum_{(i,j)\in\mathcal{A}}  \mathds{1}\{Z_i=q,Z_j=\l\}\\
m_{0,q,\l}(A,Z)&=\sum_{(i,j)\in\mathcal{A}}  \mathds{1}\{Z_i=q,Z_j=\l\}(1-A_{i,j}) ,
\end{align*}
 for $q,\l\in\{1,\dots,Q\}$. For all $\theta\in\Theta$ and $t\in[0,1]$, we have
\begin{align*}
\wh{F}_{0}(\theta,t)&=m^{-1}\: \sum_{(i,j)\in\mathcal{A}} (1-A_{i,j})  \mathds{1}\{\ell_{i,j}(\theta) \leq t\}=  \sum_{1\leq q,\l\leq Q}\frac{m_{0,q,\l}(A,Z)}{m}\wh{F}_{0,q,\l}(\theta,t),
\end{align*}
where we let
\begin{align*}
\wh{F}_{0,q,\l}(\theta,t)&= (m_{0,q,\l}(A,Z))^{-1}\: \sum_{(i,j)\in\mathcal{A}} \mathds{1}\{Z_i=q,Z_j=\l\} (1-A_{i,j})  \mathds{1}\{\bell(X_{i,j},q,\ell;\theta)  \leq t\},
\end{align*}
Note that  
$$
\E_{\theta_0} (\wh{F}_{0,q,\l}(\theta,t)\:|\:A,Z) = {F}_{0,\theta_0,q,\l}(\theta,t)
,
$$
where we denote ${F}_{0,\theta_0,q,\l}(\theta,t)=\P_{\theta_0}(\bell(X_{i,j},q,\ell;\theta)  \leq t\:|\: Z_i=q,Z_j=\l,A_{i,j}=0)$.
As a consequence, we have
\begin{align*}
&|\wh{F}_{0}(\theta,t)-{F}_{0,\theta_0}(\theta,t)|\\
&=\left| \sum_{1\leq q,\l\leq Q}\frac{m_{0,q,\l}(A,Z)}{m} \wh{F}_{0,q,\l}(\theta,t) - \sum_{1\leq q,\l\leq Q} \pi_q\pi_\l (1-w_{q,\l}){F}_{0,\theta_0,q,\l}(\theta,t) \right|\\
&\leq\sum_{1\leq q,\l\leq Q} \left| \frac{m_{0,q,\l}(A,Z)}{m} \wh{F}_{0,q,\l}(\theta,t) -  \pi_q\pi_\l (1-w_{q,\l}){F}_{0,\theta_0,q,\l}(\theta,t) \right|\\
&\leq\sum_{1\leq q,\l\leq Q} \left| \frac{m_{0,q,\l}(A,Z)}{m} - \pi_q\pi_\l (1-w_{q,\l}) \right|  
+\sum_{1\leq q,\l\leq Q}  \pi_q\pi_\l (1-w_{q,\l})\left|  \wh{F}_{0,q,\l}(\theta,t)-  {F}_{0,\theta_0,q,\l}(\theta,t) \right|\\
&\leq \sum_{1\leq q,\l\leq Q}  \frac{m_{q,\l}(Z)}{m} \left| \frac{m_{0,q,\l}(A,Z)}{m_{q,\l}(Z)} - (1-w_{q,\l})  \right| \\
&+\sum_{1\leq q,\l\leq Q} (1-w_{q,\l}) \left| \frac{m_{q,\l}(Z)}{m} -  \pi_q\pi_\l  \right|\\  
&+\sum_{1\leq q,\l\leq Q}  \pi_q\pi_\l (1-w_{q,\l})\left|  \wh{F}_{0,q,\l}(\theta,t)-  {F}_{0,\theta_0,q,\l}(\theta,t) \right|.
\end{align*}
The latter is smaller than or equal to $x$ on the  event
\begin{align*}
\Omega&=\left\{\forall q,\l\in \{1,\dots,Q\}\::\: \left| \frac{m_{0,q,\l}(A,Z)}{m_{q,\l}(Z)} - (1-w_{q,\l})  \right| \leq x/3,\left| \frac{m_{q,\l}(Z)}{m} -  \pi_q\pi_\l  \right|\leq x/(3Q^2), \right.\\
&\left. \sup_{\theta\in\Theta, t \in[0,1]}\left|\wh{F}_{0,q,\l}(\theta,t)-{F}_{0,\theta_0,q,\l}(\theta,t)\right| \leq x/3\right\}.
\end{align*}
Let us now provide an upper-bound for $\P_{\theta_0}(\Omega^c)$. 
We have
\begin{align*}
\P_{\theta_0}(\Omega^c)\leq &\:\sum_{q,\l=1}^Q \P_{\theta_0}\left(\left| \frac{m_{0,q,\l}(A,Z)}{m_{q,\l}(Z)} - (1-w_{q,\l})  \right| > x/3, \left| \frac{m_{q,\l}(Z)}{m} -  \pi_q\pi_\l  \right|\leq x/(3Q^2)\right) \\
&+ \sum_{q,\l=1}^Q\P_{\theta_0}\left(\left| \frac{m_{q,\l}(Z)}{m} -  \pi_q\pi_\l  \right|> x/(3Q^2)\right)\\
&+ \sum_{q,\l=1}^Q\P_{\theta_0}\left(\sup_{\theta\in\Theta, t \in[0,1]}\left|\wh{F}_{0,q,\l}(\theta,t)-{F}_{0,\theta_0,q,\l}(\theta,t)\right| > x/3,\left| \frac{m_{0,q,\l}(A,Z)}{m_{q,\l}(Z)} - (1-w_{q,\l})  \right| \leq x/3, \right.\\
&\hspace{4cm}\left.\left| \frac{m_{q,\l}(Z)}{m} -  \pi_q\pi_\l  \right|\leq x/(3Q^2)\right).\\
=&\:(I) + (II) + (III).
\end{align*}
To bound (I), we note that, conditionally on $Z$, $m_{0,q,\l}(A,Z)$ is the sum of $m_{q,\l}(Z)$ i.i.d. $\mathcal{B}(1-w_{q,\l})$, which gives by applying (i) of Lemma~\ref{lem:binom} ($p=1-w_{q,\l}$, $n=m_{q,\l}(Z)$), that
\begin{align*}
(I)&\leq \sum_{q,\l=1}^Q \E_{\theta_0}\left( 2 \e^{-  2 m_{q,\l}(Z) (x/3)^2} \mathds{1}\left\{\left| \frac{m_{q,\l}(Z)}{m} -  \pi_q\pi_\l  \right|\leq x/(3Q^2)\right\}\right)\\
& \leq 2 Q^2 \e^{-  m(2/9)  \left(\pi_q\pi_\l - x/(3Q^2)\right)_+ x^2}.
\end{align*}
For bounding (II), we use readily (ii) of Lemma~\ref{lem:binom} to obtain
\begin{align*}
(II)&\leq 2 Q^2 \e^{-2\lfloor n/2\rfloor x^2 /(9 Q^4)} .
\end{align*}
For bounding (III), note that
\begin{align*}
\wh{F}_{0,q,\l}(\theta,t)&= (m_{0,q,\l}(A,Z))^{-1}\: \sum_{(i,j)\in\mathcal{A}} \mathds{1}\{Z_i=q,Z_j=\l\} (1-A_{i,j})  \mathds{1}\{\bell(X_{i,j},q,\ell;\theta)  \leq t\}\\
&= (m_{0,q,\l}(A,Z))^{-1}\: \sum_{(i,j)\in\mathcal{A}} \mathds{1}\{Z_i=q,Z_j=\l\} (1-A_{i,j})  \mathds{1}\{X_{i,j}\in I(q,\l,\theta,t)\},
\end{align*}
for some $I(q,\l,\theta,t) \in \mathcal{I}_K$, by using Assumption~\ref{AssumptionIk}. 
Note that, conditionally on $A,Z$, 
 the variables of $(X_{i,j}, (i,j)\in\mathcal{A}, A_{i,j}=0,Z_j=q,Z_j=\l)$ are i.i.d.
Hence, we can apply Lemma~\ref{lem:donskerI} ($n=m_{0,q,\l}(A,Z)$), to get for all $x>0$,
\begin{align*}
(III) &\leq  Q^2\E_{\theta_0}\left( 4 K \e^{- m_{0,q,\l}(A,Z) x^2/(2K^2)} \mathds{1}\left\{ m_{0,q,\l}(A,Z) \geq  m (\pi_q\pi_\l -x/(3Q^2) ) ((1-w_{q,\l}) - x/3)\right\}\right)\\
&\leq  4 K  Q^2 \e^{- m (\pi_q\pi_\l -x/(3Q^2) )_+ ((1-w_{q,\l}) - x/3)_+ x^2/(2K^2)} .
\end{align*}
Finally, note that 
$\pi_q\pi_\l -x/3\geq \pimin^2/2$ provided that $x\leq 3\pimin^2/2$
and $(1-w_{q,\l}) - x/3\geq (1-\wmax)/2$, provided that $x\leq 3  (1-\wmax)/2$, so that $\P_{\theta_0}(\Omega^c)$ is smaller than
$ 2 Q^2 \e^{-  m  \pimin^2 x^2/9} + 2 Q^2 \e^{-2\lfloor n/2\rfloor x^2 /(9 Q^4)} + 4 K  Q^2 \e^{- m \pimin^2 (1-\wmax) x^2/(8K^2)}$. This concludes the proof of the first inequality. The second inequality is similar, by replacing $1-w_{q,\l}$ (resp. $1-A_{i,j}$) by $w_{q,\l}$ (resp. $A_{i,j}$). The third inequality is obtained similarly.
\end{proof}

\begin{lemma}\label{lem:donskerI}
Let $U_1,\dots,U_n$ be $n$ i.i.d. continuous real random variables and $K$ a positive integer. Then we have for all $x>0$,
\begin{align*}
\P\left(\sup_{I \in \mathcal{I}_K}\left|n^{-1} \sum_{i=1}^n \mathds{1}\{U_i \in I\} - \P(U_1 \in I)\right|\geq x\right)\leq 4 K \e^{- n x^2/(2K^2)},
\end{align*}
where $\mathcal{I}_K$ is defined by \eqref{equIk}.
\end{lemma}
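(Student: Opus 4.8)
The plan is to reduce the supremum over $\mathcal{I}_K$ to $K$ times the supremum over a single open interval, and then to reduce that to the classical Dvoretzky--Kiefer--Wolfowitz (DKW) inequality for the empirical c.d.f.

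Write $P_n=n^{-1}\sum_{i=1}^n\delta_{U_i}$ and $P=\mathcal{L}(U_1)$. First I would observe that for $I=\bigcup_{k=1}^K(a_k,b_k)\in\mathcal{I}_K$ with $b_k\le a_{k+1}$, the open intervals are pairwise disjoint (a point of $(a_k,b_k)$ is $<b_k\le a_{k+1}$, hence not in $(a_{k+1},b_{k+1})$), so $\mathds{1}\{u\in I\}=\sum_{k=1}^K\mathds{1}\{u\in(a_k,b_k)\}$ for every $u$, whence $P_n(I)-P(I)=\sum_{k=1}^K\big(P_n((a_k,b_k))-P((a_k,b_k))\big)$. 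The triangle inequality then gives
\[
\sup_{I\in\mathcal{I}_K}\big|P_n(I)-P(I)\big|\le K\sup_{-\infty\le a\le b\le+\infty}\big|P_n((a,b))-P((a,b))\big|.
\]

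Next I would control the single-interval supremum. Let $F_n(t)=n^{-1}\sum_{i=1}^n\mathds{1}\{U_i\le t\}$, let $G_n(t)=n^{-1}\sum_{i=1}^n\mathds{1}\{U_i<t\}$ be its left-continuous version, and $F(t)=\P(U_1\le t)$. For $a<b$ one has $(a,b)=(-\infty,b)\setminus(-\infty,a]$ with $(-\infty,a]\subseteq(-\infty,b)$, hence $P_n((a,b))=G_n(b)-F_n(a)$; and since $F$ is continuous, $P((a,b))=F(b^-)-F(a)=F(b)-F(a)$ and $\E[G_n(b)]=F(b^-)=F(b)$. Therefore
\[
\big|P_n((a,b))-P((a,b))\big|\le\big|G_n(b)-F(b)\big|+\big|F_n(a)-F(a)\big|.
\]
Since $G_n(t)=\lim_{s\uparrow t}F_n(s)$ and $F$ is continuous, $|G_n(t)-F(t)|=\lim_{s\uparrow t}|F_n(s)-F(s)|\le\sup_s|F_n(s)-F(s)|$, so the right-hand side above is at most $2\sup_t|F_n(t)-F(t)|$. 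Combining with the previous display, $\sup_{I\in\mathcal{I}_K}|P_n(I)-P(I)|\le 2K\sup_t|F_n(t)-F(t)|$. Finally I would invoke the DKW inequality (Massart's form) $\P(\sup_t|F_n(t)-F(t)|>\lambda)\le 2\e^{-2n\lambda^2}$, $\lambda>0$, applied with $\lambda=x/(2K)$, to obtain $\P(\sup_{I\in\mathcal{I}_K}|P_n(I)-P(I)|\ge x)\le 2\e^{-nx^2/(2K^2)}\le 4K\,\e^{-nx^2/(2K^2)}$, using $K\ge1$ for the last inequality, which is the claimed bound.

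The only genuinely delicate point, and the reason a one-line reduction does not work, is that the interval endpoints range over all of $\mathbb{R}$, including the data points $U_i$, so open, half-open and closed intervals are not interchangeable under $P_n$; the device handling this is precisely the left-continuous empirical function $G_n$ together with the continuity of $F$ (used both to identify $\E[G_n(b)]$ with $F(b)$ and to pass to left limits). Measurability of the suprema is not an issue, since by monotonicity and one-sided continuity they may be restricted to a countable dense set of endpoints.
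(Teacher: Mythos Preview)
Your proof is correct and follows essentially the same route as the paper's: decompose the union into its $K$ disjoint intervals, split each interval indicator into two half-line indicators, and reduce to the DKW inequality with Massart's constant. Your execution is in fact cleaner than the paper's: by deriving the pathwise bound $\sup_{I\in\mathcal I_K}|P_n(I)-P(I)|\le 2K\sup_t|F_n(t)-F(t)|$ and applying DKW once, you obtain the constant $2$ rather than $4K$ (the paper loses an extra factor $2K$ by splitting into events and using a redundant union bound over $k$), which you then harmlessly weaken to match the stated inequality.
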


\begin{proof} 
Denote 
$$\mathcal{C}=\{ (a_k, b_k)_{1\leq k\leq K}\mbox{ such that }  -\infty\leq a_k\leq b_k\leq +\infty \mbox{ for } 1\leq k \leq K,  \mbox{ and } b_{k}\leq a_{k+1}  \mbox{ for } 1\leq k\leq K-1 \}.$$
For all $y>0$, we have
\begin{align*}
&\P\left(\sup_{I \in \mathcal{A}}\left|n^{-1} \sum_{i=1}^n \mathds{1}\{U_i \in I\} - \P(U_1 \in I)\right|\geq 2 K y\right)\\
&\leq \P\left(\sup_{(a_k, b_k)_{k} \in \mathcal{C}} \sum_{k=1}^K \left|n^{-1} \sum_{i=1}^n \mathds{1}\{a_k <U_i<b_k \} - \P(a_k<U_1<b_k)\right|\geq 2 K y\right)\\
&\leq \P\left(\sup_{(a_k, b_k)_{k} \in \mathcal{C}} \sum_{k=1}^K \left|n^{-1} \sum_{i=1}^n \mathds{1}\{U_i<b_k \} - \P(U_1<b_k)\right|\geq K y\right)\\
&+ \P\left(\sup_{(a_k, b_k)_{k} \in \mathcal{C}} \sum_{k=1}^K \left|n^{-1} \sum_{i=1}^n \mathds{1}\{U_i\leq a_k \} - \P( U_1\leq a_k)\right|\geq K y\right)
\end{align*}
because $ \mathds{1}\{a_k <U_i<b_k \}= \mathds{1}\{U_i<b_k \}- \mathds{1}\{U_i\leq a_k \}$.
Now,  we have
\begin{align*}
&\P\left(\sup_{(a_k, b_k)_{k} \in \mathcal{C}} \sum_{k=1}^K \left|n^{-1} \sum_{i=1}^n \mathds{1}\{U_i\leq a_k \} - \P( U_1\leq a_k)\right|\geq K y\right)\\
&\leq \P\left( \sum_{k=1}^K \sup_{t\in\R }\left|n^{-1} \sum_{i=1}^n \mathds{1}\{U_i\leq t \} - \P( U_1\leq t)\right|\geq K y\right)\\
&\leq \sum_{k=1}^K  \P\left( \sup_{t\in\R }\left|n^{-1} \sum_{i=1}^n \mathds{1}\{U_i\leq t \} - \P( U_1\leq t)\right|\geq y\right)\\
&\leq 2 K \e^{-2 n y^2},
\end{align*}
by using DKW inequality with Massart's constant, see \cite{Mass1990}. Since, almost surely,  $\forall i\in\{1,\dots,n\},$ $\forall t\in\Q$, $U_i\neq t$, we have almost surely,
\begin{align*}
\sup_{t\in\R }\left|n^{-1} \sum_{i=1}^n \mathds{1}\{U_i\leq t \} - \P( U_1\leq t)\right| &= \sup_{t\in\Q }\left|n^{-1} \sum_{i=1}^n \mathds{1}\{U_i\leq t \} - \P( U_1\leq t)\right|\\
&=\sup_{t\in\Q }\left|n^{-1} \sum_{i=1}^n \mathds{1}\{U_i< t \} - \P( U_1< t)\right|\\
&=\sup_{t\in\R }\left|n^{-1} \sum_{i=1}^n \mathds{1}\{U_i< t \} - \P( U_1< t)\right|,
\end{align*}
and the proof is finished.
\end{proof}

\begin{lemma}\label{lem:binom}
Let $n\geq 2$ be an integer. Then
\begin{itemize}
\item[(i)] For $Y\sim \mathcal{B}(n,p)$,  $p\in (0,1)$, we have for all $x>0$,
\begin{align*}
\P\left(|Y/n -p|\geq x\right)&\leq 2 \e^{-  2 n x^2}.
\end{align*}
\item[(ii)] For $Z_i$ for $1\leq i\leq n$ i.i.d.  where $\pi_q=\P(Z_1=q)\in (0,1)$, $q=1,\dots,Q$, $\sum_{q=1}^Q \pi_q=1$, we have  for all $x>0$,
\begin{align*}
\P\left(\left|m^{-1}\sum_{1\leq i< j\leq n} \mathds{1}\{Z_i=q,Z_j=\l\} -  \pi_q\pi_\l\right|\geq x\right)&\leq 2\e^{-2 \lfloor n/2\rfloor x^2}
. 
\end{align*}
\end{itemize}
\end{lemma}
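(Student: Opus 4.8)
The plan is to obtain both estimates by exponential-moment (Chernoff) arguments, in each case reducing the quantity of interest to an average of \emph{independent} $[0,1]$-valued summands and then invoking Hoeffding's inequality. Part (i) is immediate: $Y\sim\mathcal B(n,p)$ is a sum of $n$ i.i.d.\ $\{0,1\}$-valued variables of mean $p$, so applying Hoeffding's inequality to $Y/n$ gives $\P(|Y/n-p|\geq x)\leq 2\e^{-2nx^2}$, and nothing further is needed.

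For part (ii) the obstruction is that the summands $\mathds{1}\{Z_i=q,Z_j=\l\}$, $(i,j)\in\mathcal{A}$, are dependent (any two sharing an index are correlated), so Hoeffding does not apply directly. The device I would use, in the spirit of Hoeffding's treatment of $U$-statistics, is to rewrite the pair-average as an average, over a random pairing, of averages of independent terms. Set $p=\lfloor n/2\rfloor$ and let $M=\{\{a_1,b_1\},\dots,\{a_p,b_p\}\}$ (with $a_k<b_k$) be a configuration of $p$ disjoint unordered pairs from $\{1,\dots,n\}$, drawn uniformly among all such configurations and independently of $Z$; put $W_M=p^{-1}\sum_{k=1}^p\mathds{1}\{Z_{a_k}=q,\,Z_{b_k}=\l\}$. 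Two elementary facts then do the work. First, the $p$ pairs of $M$ are exchangeable and a uniformly chosen pair of a uniformly chosen such configuration is a uniformly chosen element of $\mathcal{A}$ (a short count of how many configurations contain a fixed pair, done separately for $n$ even and $n$ odd); hence $\E_M[W_M]=m^{-1}\sum_{(i,j)\in\mathcal{A}}\mathds{1}\{Z_i=q,Z_j=\l\}$, exactly the quantity to be controlled. Second, for any fixed $M$ the $2p$ indices $a_1,b_1,\dots,a_p,b_p$ are distinct, so $Z_{a_1},Z_{b_1},\dots$ are i.i.d.; the $p$ indicators $\mathds{1}\{Z_{a_k}=q,Z_{b_k}=\l\}$ are therefore independent, each $\{0,1\}$-valued with mean $\pi_q\pi_\l$ (this uses only $i\neq j$, so it holds whether or not $q=\l$), and the conditional law of $W_M$ given $M$ is always that of $\mathrm{Bin}(p,\pi_q\pi_\l)/p$.

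Combining the two facts, for every $\lambda$ I would bound, using Jensen's inequality (convexity of $x\mapsto\e^{\lambda x}$), then Fubini together with the independence of $M$ and $Z$,
\[
\E\big[\e^{\lambda(\E_M[W_M]-\pi_q\pi_\l)}\big]\leq \E_M\E_Z\big[\e^{\lambda(W_M-\pi_q\pi_\l)}\big]=\E\big[\e^{\lambda(\mathrm{Bin}(p,\pi_q\pi_\l)/p-\pi_q\pi_\l)}\big]\leq \e^{\lambda^2/(8p)},
\]
the last step being Hoeffding's lemma. Optimising $\lambda$ in the Chernoff bound gives $\P(\E_M[W_M]-\pi_q\pi_\l\geq x)\leq\e^{-2px^2}$; the lower tail is symmetric, and a union bound yields $2\e^{-2\lfloor n/2\rfloor x^2}$, as claimed.

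The only point requiring any care is the first of the two elementary facts — that the matching-average reproduces the pair-average over $\mathcal{A}$ with the index ordering ``$i<j$'' correctly respected. This is what makes the argument go through for the non-symmetric off-diagonal kernel ($q\neq\l$), where $\sum_{i<j}\mathds{1}\{Z_i=q,Z_j=\l\}$ is not a symmetric function of $(Z_1,\dots,Z_n)$, and not only for $q=\l$; everything else is routine.
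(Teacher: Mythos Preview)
Your proof is correct and follows essentially the same route as the paper, which simply invokes the classical Hoeffding inequality for (i) and Hoeffding's inequality for $U$-statistics for (ii) by citation. Your argument is in fact the standard proof of the latter result (averaging over random matchings, then Jensen plus Hoeffding's lemma), made self-contained; a small bonus is that by keeping the ordering $a_k<b_k$ throughout you verify the bound for the asymmetric kernel $\mathds{1}\{z_1=q,z_2=\l\}$ with $q\neq\l$, a point the paper's bare citation does not address explicitly.
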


\begin{proof} 
Both inequalities are applications of versions of Hoeffding inequalities : (i) is the classical version, while (ii) is the one devoted to  $U$-statistics, see e.g. \cite{Pitcan2017}.

\end{proof}

\subsection{Gaussian model computations}\label{sec:assumptiongaussian}\label{sec:completmentcalcul}\label{sec:computGaussian}

\subsubsection{Checking assumptions in Gaussian NSBM}

In this section, we consider the Gaussian model \eqref{gaussmodel} with parameter set \eqref{sec:thetagauss} (here, we do not restrict the parameter set to be either $\Theta_{\sigma_0}$ or $\Theta_{\sigma^+_0}$). 
Recall in this case that the parameter is 
$\theta=(\pi,w,\sigma_0,\mu=(\mu_{q,\l})_{1\leq q\leq \l\leq Q},\sigma=(\sigma_{q,\l})_{1\leq q\leq \l\leq Q})$  and the set $\Theta$ is such that for each $q,\l$, either $\sigma_{q,\l}\neq  \sigma_0$ or $\mu_{q,\l}\neq 0$, which means that the distribution under the null is always different from the distribution under the alternative. We now provide a short summary of the results obtained throughout this section.

First, we can check that Assumptions~\ref{cont}~and~\ref{AssumptionIk} both hold, see Sections~\ref{sec:computlqvalgaussian} and~\ref{sec:studyq}. The complexity assumption holds with $K=2$. The regularity assumption holds with, for all $q,\l,$ $t_{1,q,\l}(\theta_0)=0$ if and only if $\sigma_{q,\l}\geq \sigma_0$ and $t_{2,q,\l}(\theta_0)=1$ if and only if $\sigma_{q,\l}\leq \sigma_0$.  An illustration is given in Figure~\ref{fig:QGaussian2}.
Note however that the function $t\mapsto \Q_{\theta_0}(\theta_0,t)$ might jump in $t_1(\theta_0)$ (see case 4), so is not necessarily continuous on $[0,1]$. Also, this function might have infinite derivative at the boundary points $\{t_{1,q,\l}(\theta_0)\}_{q,\l}$, $\{t_{2,q,\l}(\theta_0)\}_{q,\l}$ (see cases 2-3-4).

Second, the critical level $\alpha_*(\theta_0)$ given by \eqref{def:alphastar} follows the following simple rule: $\alpha_*(\theta_0)=0$ if and only if $\theta_0=(\pi,w,\sigma_0,\mu,\sigma)$ is such that $\max_{q,\l}\sigma_{q,\l}\geq \sigma_0$, that is, there exists a variance under the alternative that is at least equal to the variance under the null, see Lemma~\ref{lem:alphastargauss} in Section~\ref{sec:computalphastargaussian}.
For instance, in the context of Figures~\ref{fig:QGaussian}~and~\ref{fig:QGaussian2}, $\alpha_*=0$ in cases $1$-$2$-$3$ while $\alpha_*>0$ in case $4$.

Third, in the Gaussian case, we can say more about the order of $\mathcal{W}_{\theta_0,\q}(u)$ and $\mathcal{W}_{ T,\q_1}(u)$ as $u\to 0$. 
For this, we should however avoid the non regular behavior of $t\mapsto \Q_{\theta_0}(\theta_0,t)$ occurring at the boundary point $\{t_{1,q,\l}(\theta_0)\}_{q,\l}\cup \{t_{2,q,\l}(\theta_0)\}_{q,\l}$. 
This is possible by considering a compact set $\mathcal{K}$ containing $\alpha$ and such that $\mathcal{K}\subset (\alpha_*,\alpha^*)$ where $\alpha^*=\alpha^*(\theta_0)$ is given by
\begin{align}
\alpha^*(\theta_0)&= \Q_{\theta_0}(\theta_0,t'_2(\theta_0)) \in (\alpha_*(\theta),\pizero]\label{def:alphastar2}\\
t'_2(\theta_0)&= \min\{t_{1,q,\l}(\theta_0),t_{2,q,\l}(\theta_0),  \mbox{ for $q,\l$ s.t. }t_{1,q,\l}(\theta_0)\neq t_1(\theta_0) \}\in (t_1(\theta_0),1].\label{def:tprime2}
\end{align}
For instance, in Figures~\ref{fig:QGaussian}~and~\ref{fig:QGaussian2}, $t'_2(\theta_0)$ is equal to $1$, $\approx 0.62$, $\approx 0.27$ and $\approx 0.31$ in case 1-2-3-4, respectively. 
As a matter of fact, $t'_2(\theta_0)$ is often fairly away from zero. For instance, we establish in Section~\ref{sec:studytprime2}, that $t'_2(\theta_0)\geq 1/2$ when $\theta_0=(\pi,w,\sigma_0,\mu,\sigma)$ is such that $w_{q,\l}\leq 1/2$ and $\sigma_{q,\l}\geq \sigma_0$ for all $q,\l$.

Choosing a compact $\mathcal{K}$ as above, we are able to state that there exists $C(\theta_0,\alpha,Q)$ with for $u$ small enough, $\mathcal{W}_{T,\q_1}(u)\leq u C(\theta_0,\alpha,Q)$, because $\q_1$ is differentiable in $T_{\theta_0}(\alpha)$, see Section~\ref{sec:studyq}.

Dealing with $\mathcal{W}_{\theta_0,\q}(u)$ needs an additional assumption:
\begin{equation}\label{equ:forMgaussian}
\mbox{  $\mathcal{C}(\theta)=\{(q,\l)\in  \{1,\dots,Q\}^2\::\: \sigma_{q,\l}=\sigma_0\}$ does not depend on $\theta=(\pi,w,\sigma_0,\mu,\sigma)\in\Theta$.}
\end{equation}
Equivalently, \eqref{equ:forMgaussian} means that there exists some $\mathcal{C}\subset \{1,\dots,Q\}^2$ such that 
$$
\Theta\subset  \{\theta=(\pi,w,\sigma_0,\mu,\sigma)\in \Theta\::\:  \forall (q,\l)\in \mathcal{C}, \sigma_{q,\l}=\sigma_0,\: \forall (q,\l)\notin \mathcal{C}, \sigma_{q,\l}\neq \sigma_0 \}.
$$
In Section~\ref{sec:computMgaussian}, we show that under \eqref{equ:forMgaussian} there exists $C(\theta_0,\alpha,Q)>0$ with for $u$ small enough, $\mathcal{W}_{\theta_0,\q}(u)\leq u C(\theta_0,\alpha,Q)$.

\begin{figure}[h!]
\begin{center}
\begin{tabular}{cc}
\vspace{-0.5cm}
Case $3$ &Case $4$\\
\vspace{-0.5cm}
\includegraphics[scale=0.4]{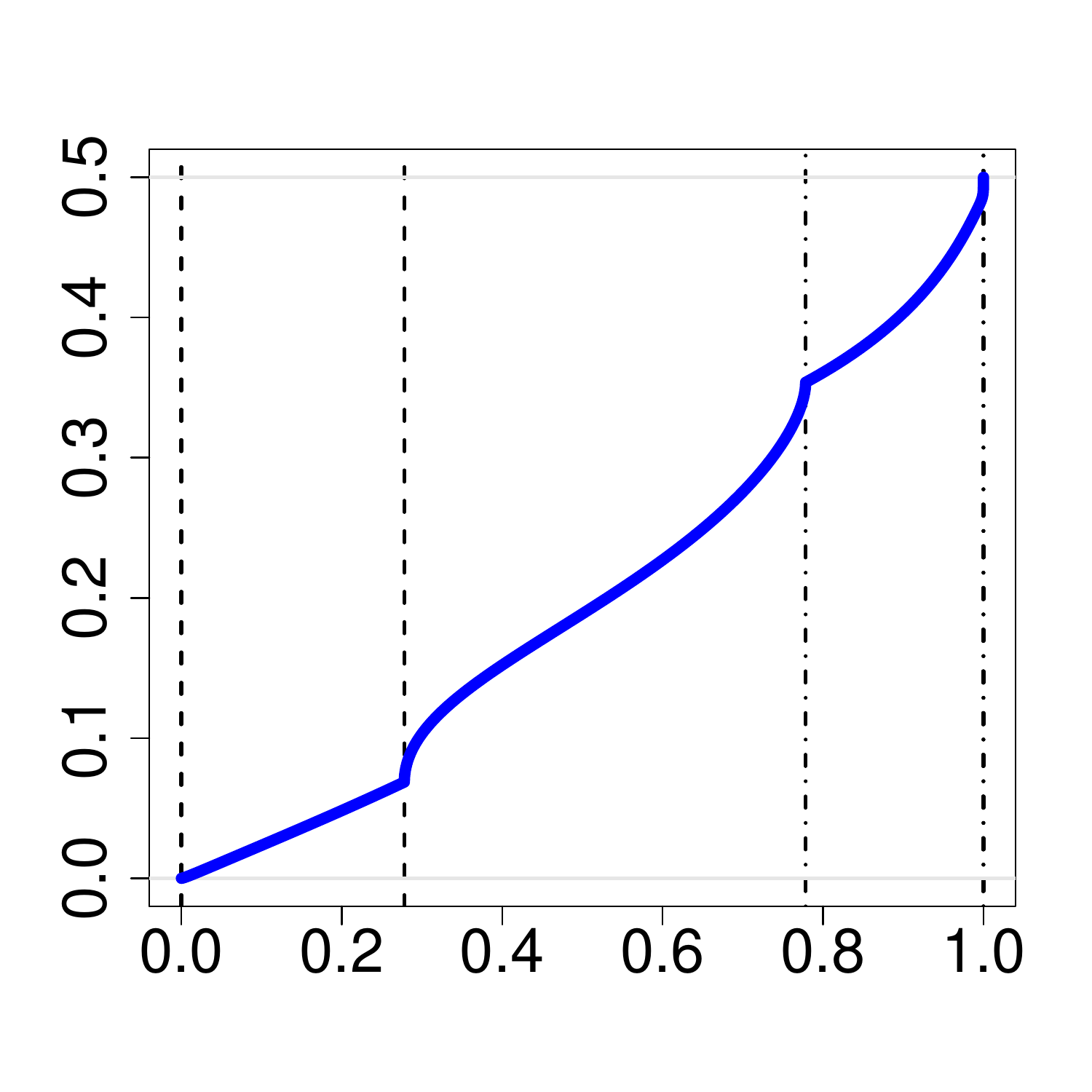}&\includegraphics[scale=0.4]{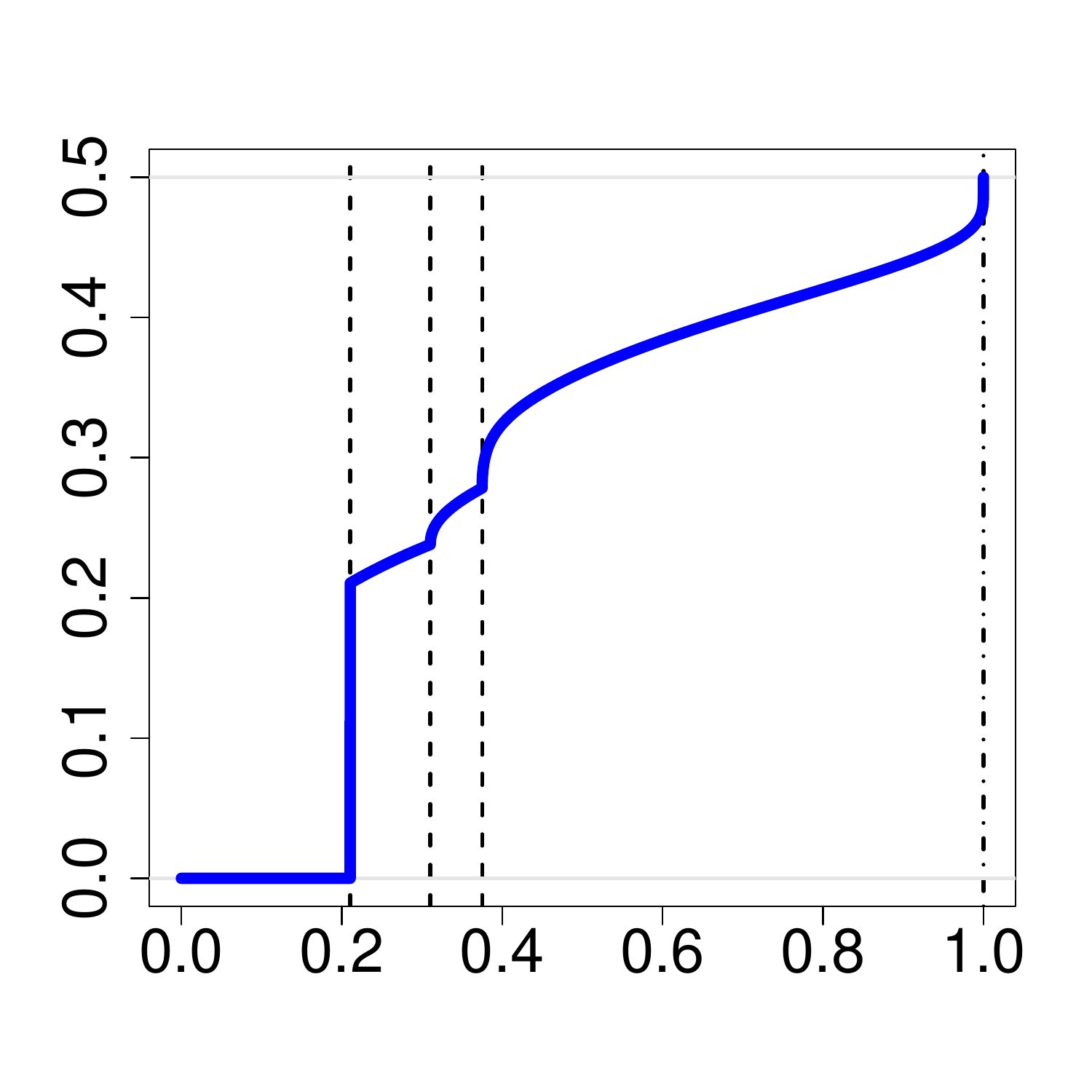}
\end{tabular}
\end{center}
\caption{Plot of $t\mapsto \Q_{\theta}(\theta,t)$ defined by \eqref{equ:Qcond} in the Gaussian case, for $2$ different values of the parameter $\theta$. In each case, the vertical dashed (resp. dashed-dotted) lines correspond to $\{t_{1,q,\l}\}_{q,\l}$ (resp.  $\{t_{2,q,\l}\}_{q,\l}$). In all cases we have $\pi_{q,\l}=0.5$ for all $q,\l$. $Q=2$. $w=(0.4,0.5,0.5,0.6)$. $\sigma_0=(1,1,1,1)$.
For case $3$: $\mu=(1,-2,-2,4)$, $\sigma=(0.5,1.1,1.1,3)$. For case $4$: $\mu=(0,0,0,0)$, $\sigma=(0.3,0.9,0.9,0.4)$. 
 \label{fig:QGaussian2}}
\end{figure}

\subsubsection{Computing $\l$-value and $q$-value functionals}\label{sec:computlqvalgaussian}

The $\l$-value functional \eqref{formulalvalfunction} is clearly given by
\begin{align}
\bell(x,q,\l;\theta)&= 
 \frac{ (1-w_{q,\l}) \phi(x/\sigma_0)/\sigma_0 }{(1-w_{q,\l}) \phi(x/\sigma_0)/\sigma_0 +w_{q,\l} \phi((x-\mu_{q,\l})/\sigma_{q,\l})/\sigma_{q,\l}}.\label{equ:lfonctiongauss}
 \end{align}
 Now,  let us fix $q,\l  \in \{1,\dots,Q\}$, $\theta',\theta\in\Theta$, $\delta\in\{0,1\}$ and let us compute the $q$-value functional
 \begin{align*}
\q_\delta(t,q,\l; \theta',\theta)=\P_{\theta'}( \bell(X_{i,j},q,\l;\theta)\leq t\:|\:
 Z_i=q,Z_j=\l, A_{i,j}=\delta) .
 \end{align*}
First, for $t=0$, $\q_\delta(t,q,\l; \theta',\theta)=0$.
Second, for all $t\in(0,1]$, if
$\theta=(\pi,w,\sigma_0,\mu,\sigma)$, 
 observe that 
\begin{align*}
\{x\in\R\::\:\bell(x,q,\l;\theta)\leq t\}
=&\left\{x\in\R\::\:
 \frac{ (1-w_{q,\l}) \phi(x/\sigma_0)/\sigma_0 }{(1-w_{q,\l}) \phi(x/\sigma_0)/\sigma_0 +w_{q,\l} \phi((x-\mu_{q,\l})/\sigma_{q,\l})/\sigma_{q,\l}}\leq t\right\}\\
&=
\left\{x\in\R\::\:
 \frac{\phi((x-\mu_{q,\l})/\sigma_{q,\l})}{\phi(x/\sigma_0)} \geq  (\sigma_{q,\l}/\sigma_0)(1/w_{q,\l}-1)(1/t-1)\right\}.
\end{align*}
Now, since 
$-2\log\left(\frac{\phi((x-\mu_{q,\l})/\sigma_{q,\l})}{\phi(x/\sigma_0)}\right) = [\sigma_{q,\l}^{-2}-\sigma_0^{-2}] x^2 - 2\mu_{q,\l}\sigma_{q,\l}^{-2} x + \mu_{q,\l}^2 \sigma_{q,\l}^{-2} ,
$
we have
\begin{align}
 \{ \bell(X_{i,j},q,\l;\theta) \leq t\} = \left\{ aX_{i,j}^2+bX_{i,j}+c\leq  0 \right\},
\label{equunioninterval}
\end{align}
for the values $a=a(q,\l,\theta)$, $b=b(q,\l,\theta)$, $c=c(q,\l,\theta)$ given by
\begin{align}\label{equabc}
\left\{\begin{array}{ll}
a &= \sigma_{q,\l}^{-2}-\sigma_0^{-2};\\
b &=  -2\mu_{q,\l}\sigma_{q,\l}^{-2}; \\
c &=\mu_{q,\l}^2 \sigma_{q,\l}^{-2} +2\log\left((\sigma_{q,\l}/\sigma_0)(1/w_{q,\l}-1)(1/t-1)\right).
\end{array}\right.
\end{align}

 As a result, we have
\begin{align}
 \q_\delta(t,q,\l; \theta',\theta)&=\P_{U\sim \mathcal{N}(\mu(\delta),\sigma(\delta)^2)}(aU^2+bU+c\leq 0) ,\label{equ:qfonctiongauss}
 \end{align}
for  $\mu(\delta)=\mu(\delta,q,\l,\theta')$, $\sigma(\delta)=\sigma(\delta,q,\l,\theta')$ given by
\begin{align}\label{equmusigma}
\mbox{$\mu(0)=0, \sigma(0)=\sigma_0'$, and $\mu(1)=\mu'_{q,\l}, \sigma(1)=\sigma'_{q,\l}$.}
\end{align}

In addition, expression \eqref{equ:qfonctiongauss} can be made explicit by an elementary  inversion of $aU^2+bU+c<0$ in $U$.
More precisely, denoting $\Phi_{\delta}$  the cumulative distribution function of the distribution $\mathcal{N}(\mu(\delta),\sigma(\delta)^2)$, we obtain

\begin{itemize}
\item[(i)] if $a<0$ (that is, $\sigma_{q,\l}>\sigma_0$),
\begin{align*}
&\q_\delta(t,q,\l; \theta',\theta)=\mathds{1}\{b^2<4ac\}  \\
&+ \left[ \Phi_\delta\left(\frac{b-(b^2-4ac)^{1/2}_+}{2|a|}\right)  + 1-\Phi_\delta\left( \frac{b+(b^2-4ac)^{1/2}_+}{2|a|}\right) \right]\mathds{1}\{b^2\geq 4ac\};
\end{align*}
\item[(ii)] if $a>0$ (that is, $\sigma_{q,\l}<\sigma_0$),
\begin{equation*}
\q_\delta(t,q,\l; \theta',\theta)=  \left[\Phi_\delta\left(  \frac{-b+(b^2-4ac)^{1/2}_+}{2a} \right) -\Phi_\delta\left(  \frac{-b-(b^2-4ac)^{1/2}_+}{2a}\right)\right]\mathds{1}\{b^2> 4ac\};
\end{equation*}
\item[(iii)] if $a=0$  and $b\neq 0$ (that is, $\sigma_{q,\l}=\sigma_0$ and $\mu_{q,\l}\neq 0$),
\begin{equation*}
\q_\delta(t,q,\l; \theta',\theta)=
 (1-\Phi_\delta(-c/b))  \mathds{1}\{b< 0\} + \Phi_\delta(-c/b)  \mathds{1}\{b> 0\}.
\end{equation*}
\end{itemize}

Note that \eqref{equunioninterval} shows that Assumption~\ref{AssumptionIk} holds with $K=2$.

\subsubsection{Study of the $q$-value functional}\label{sec:studyq}

By Lemma~\ref{lem:proba}, and expression \eqref{equ:qfonctiongauss}, we get that, for all $q,\l\in\{1,\dots,Q\}$, 
 the function 
$$(t,\theta,\theta')\in [0,1]\times \Theta^2 \mapsto \q_\delta(t,q,\l; \theta',\theta)$$ is continuous on $[0,1]\times \Theta^2$.
 
By the explicit expressions of the previous section, we have a more accurate idea of the behavior of the continuous function  $t\in [0,1] \mapsto \q_\delta(t,q,\l; \theta',\theta)$, for fixed values of $q,\l\in\{1,\dots,Q\}$, $\theta,\theta' \in\Theta$ and any $\delta\in\{0,1\}$. Let us define $t_1=t_{1,q,\l}(\theta)$ and $t_2=t_{2,q,\l}(\theta)$ as follows:
 \begin{align}\label{deft1t2}
(t_1, t_2) = 
\left\{\begin{array}{cl}
(0,t_0) &\mbox{ if $\sigma_{q,\l}>\sigma_0$};\\
(t_0,1) &\mbox{ if $\sigma_{q,\l}<\sigma_0$};\\
(0,1) &\mbox{ if $\sigma_{q,\l}=\sigma_0$};
\end{array}
\right.
\:\: t_0 = \left(1+\frac{w_{q,\l}}{1-w_{q,\l}} \frac{\sigma_0}{\sigma_{q,\l}} \exp\left(\frac{\mu_{q,\l}^2}{2(\sigma_0^2-\sigma_{q,\l}^2)}\right)\right)^{-1}. 
\end{align}
Then we have the following result.
\begin{lemma}\label{lem:forassumptioncont}
The function
$t\in [0,1] \mapsto \q_\delta(t,q,\l; \theta',\theta)$ is constant equal to $0$ on $[0,t_1]$,  
continuous increasing on $[t_1,t_2]$ from $t=t_1$ (value $0$) to $t=t_2$ (value $1$) and then is constant equal to $1$ on $[t_0,1]$. It is also infinitely differentiable on $(t_1,t_2)$, but not differentiable in $t_1$ when $t_1> 0$ and in $t_2$ when $t_2< 1$.
\end{lemma}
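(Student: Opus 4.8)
## Proof Proposal for Lemma~\ref{lem:forassumptioncont}

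The plan is to analyze the explicit formulas for $\q_\delta(t,q,\l;\theta',\theta)$ obtained in Section~\ref{sec:computlqvalgaussian} by reducing everything to monotonicity properties of the parameter $c = c(q,\l,\theta)$ defined in \eqref{equabc}, which is the only ingredient of the formulas depending on $t$. Note that $c$ is a strictly \emph{decreasing} continuous function of $t\in(0,1]$, ranging over all of $\mathbb{R}$ as $t$ goes from $0^+$ to $1$ (it tends to $+\infty$ as $t\to 0^+$ and to $-\infty$ as $t\to 1$), since $\log(1/t-1)$ has this behavior. So the whole problem amounts to tracking how the event $\{aU^2+bU+c\leq 0\}$, for $U\sim\mathcal N(\mu(\delta),\sigma(\delta)^2)$, grows as $c$ decreases.

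I would treat the three cases of \eqref{deft1t2} separately, matching the three cases (i)--(iii) of the explicit formula.

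\textbf{Case $\sigma_{q,\l}>\sigma_0$ (so $a<0$, $t_1=0$, $t_2=t_0$).} Here the solution set of $aU^2+bU+c\leq 0$ in $U$ is $\mathbb{R}$ when $b^2< 4ac$ and is the complement of an open bounded interval when $b^2\geq 4ac$. Since $a<0$, the condition $b^2<4ac$ reads $c< b^2/(4a)$, i.e. $c$ small enough, i.e. (by monotonicity of $c$ in $t$) $t$ large enough; one computes that the threshold $c=b^2/(4a)$ corresponds exactly to $t=t_0$, using \eqref{equabc}. Thus for $t\in[t_0,1]$ we get $\q_\delta=1$. For $t\in(0,t_0)$, $\q_\delta(t,q,\l;\theta',\theta) = \Phi_\delta\!\big(\tfrac{b-\sqrt{b^2-4ac}}{2|a|}\big) + 1-\Phi_\delta\!\big(\tfrac{b+\sqrt{b^2-4ac}}{2|a|}\big)$; as $t\to 0^+$ one has $c\to+\infty$, hence $b^2-4ac\to+\infty$ and the interval $[\tfrac{b-\sqrt{b^2-4ac}}{2|a|},\tfrac{b+\sqrt{b^2-4ac}}{2|a|}]$ expands to all of $\mathbb{R}$, so $\q_\delta\to 0$; and as $t\uparrow t_0$, $b^2-4ac\to 0^+$ so the interval shrinks to the point $b/(2|a|)$, giving $\q_\delta\to 1$ by continuity of $\Phi_\delta$. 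Strict monotonicity on $(0,t_0)$ follows because the excluded interval shrinks strictly as $c$ decreases strictly (one endpoint moves strictly right, the other strictly left), and $\Phi_\delta$ is strictly increasing; infinite differentiability on $(0,t_0)$ is clear since $c(t)$ is smooth there and $b^2-4ac>0$ keeps the square root smooth, and $\Phi_\delta$ is smooth. Non-differentiability at $t_0$ when $t_0<1$ comes from the $\sqrt{b^2-4ac}$ factor having a square-root singularity at the point where $b^2-4ac=0$, so the one-sided derivative from the left is $-\infty$ (using that $\Phi_\delta'$ is nonzero at the relevant point $b/(2|a|)$, which holds because $\Phi_\delta$ is a genuine Gaussian cdf). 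Here $t_1=0$, so there is no left endpoint issue to check.

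\textbf{Case $\sigma_{q,\l}<\sigma_0$ (so $a>0$, $t_1=t_0$, $t_2=1$).} Now the solution set of $aU^2+bU+c\leq 0$ is empty when $b^2< 4ac$ and a closed bounded interval $[\tfrac{-b-\sqrt{b^2-4ac}}{2a},\tfrac{-b+\sqrt{b^2-4ac}}{2a}]$ when $b^2\geq 4ac$. With $a>0$, $b^2<4ac$ reads $c>b^2/(4a)$, i.e. $t$ small, and the threshold $c=b^2/(4a)$ corresponds to $t=t_0$; so $\q_\delta=0$ on $[0,t_0]$. On $(t_0,1)$ the interval has positive length and expands to $\mathbb R$ as $t\to 1^-$ (then $c\to-\infty$), giving $\q_\delta\to 1$; it shrinks to a point as $t\downarrow t_0$, giving $\q_\delta\to 0$. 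Strict monotonicity, smoothness on $(t_0,1)$ and the square-root singularity (hence non-differentiability) at $t_0>0$ follow exactly as above; at $t_2=1$ there is nothing to check since $t_2=1$.

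\textbf{Case $\sigma_{q,\l}=\sigma_0$, $\mu_{q,\l}\neq 0$ (so $a=0$, $b\neq 0$, $t_1=0$, $t_2=1$).} Then the inequality is linear: $bU+c\leq 0$, solution set $\{U\leq -c/b\}$ if $b>0$ and $\{U\geq -c/b\}$ if $b<0$. Since $c$ is strictly decreasing in $t$ with range $\mathbb{R}$, $-c/b$ is strictly monotone in $t$ (increasing if $b>0$, decreasing if $b<0$) with range $\mathbb{R}$; composing with the strictly increasing $\Phi_\delta$ gives that $t\mapsto \q_\delta(t,q,\l;\theta',\theta)$ is strictly increasing from $0$ (at $t=0^+$) to $1$ (at $t=1^-$), and it is infinitely differentiable on $(0,1)$ because $t\mapsto c(t)$ is smooth on $(0,1)$ and $\Phi_\delta$ is smooth; here $t_1=0$ and $t_2=1$ so no boundary non-differentiability is asserted.

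The main obstacle I anticipate is the bookkeeping in Cases (i)--(ii): one must carefully verify that the algebraic threshold $b^2=4ac$ translates, via the definitions \eqref{equabc} of $a,b,c$, into precisely the value $t_0$ given in \eqref{deft1t2} (this is a direct but slightly fiddly computation solving $c = b^2/(4a)$ for $t$), and one must confirm that near that threshold the map $t\mapsto\sqrt{b^2-4ac(t)}$ behaves like $\sqrt{t_0-t}$ (resp.\ $\sqrt{t-t_0}$) up to a nonzero constant, which is what produces the infinite one-sided derivative. All the continuity/differentiability claims ultimately reduce to the smoothness of $t\mapsto\log(1/t-1)$ on $(0,1)$ and the smoothness and strict monotonicity of $\Phi_\delta$, together with the square-root singularity analysis at the discriminant zero.
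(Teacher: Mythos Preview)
Your proposal is correct and follows essentially the same approach as the paper: both proofs split into the three cases according to the sign of $a$, identify $c$ as the only $t$-dependent ingredient, verify that the threshold $b^2=4ac$ corresponds exactly to $t=t_0$, and exploit the square-root singularity of the discriminant at that threshold to obtain the non-differentiability at the boundary points. The only minor difference is presentational: the paper writes down the explicit derivative formula
\[
\frac{d}{dt}\q_\delta(t,q,\l;\theta',\theta)=\frac{2}{t(1-t)(b^2-4ac)^{1/2}}\Bigl(\phi_\delta\bigl(\tfrac{-b+\sqrt{b^2-4ac}}{2a}\bigr)+\phi_\delta\bigl(\tfrac{-b-\sqrt{b^2-4ac}}{2a}\bigr)\Bigr)
\]
(for $a\neq 0$, and the analogous formula for $a=0$), which simultaneously gives strict positivity on $(t_1,t_2)$ and the blow-up at the endpoints, whereas you argue monotonicity geometrically (via the shrinking or expanding of the quadratic sublevel interval) and treat the boundary singularity separately; both routes are equivalent and equally valid.
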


Indeed, if $a=0$ the result is obvious. If $a\neq 0$, $t_0$ is the only value of $t$ such that $b^2=4 a c$; 
 if $a<0$, the quantity $ac$, as a function of $t$, is continuous increasing with limits $-\infty$ and $+\infty$ in $t=0^+$ and $t=1^-$.
If $a>0$, the quantity $ac$, as a function of $t$, is decreasing with limits $+\infty$ and $-\infty$ in $t=0^+$ and $t=1^-$, respectively. 
In addition, 
 the derivative in $t \notin\{t_1,t_2\}$ is equal to, when $a\neq 0$,  
$$
\frac{2}{t(1-t)(b^2-4ac)^{1/2}} \left(\phi_{\delta}\left(\frac{-b+(b^2-4ac)^{1/2}_+}{2a}\right)  +\phi_{\delta}\left(\frac{-b-(b^2-4ac)^{1/2}_+}{2 a}\right)\right)\mathds{1}\{b^2> 4ac\} ,
$$
and when $a=0$ (and thus $b\neq 0$),
$$
\frac{2}{t(1-t)|b|} \phi_{\delta}\left(\frac{-c}{b}\right)  ,
$$
where $\phi_{\delta}$ denotes  the density of the distribution $\mathcal{N}(\mu(\delta),\sigma(\delta)^2)$.
This entails Lemma~\ref{lem:forassumptioncont}.

In particular, the results of this section imply that Assumption~\ref{cont} holds
and that $t\mapsto \q_1(t,q,\l; \theta_0,\theta_0)$ is differentiable in $t=T_{\theta_0}(\alpha)$ when $T_{\theta_0}(\alpha)\notin\{t_1,t_2\}$.

\subsubsection{Studying $\alpha_*$}\label{sec:computalphastargaussian}

Let $\theta=(\pi,w,\sigma_0,\mu,\sigma)\in\Theta$. Recall $$\alpha_*(\theta)=\lim_{t\to t_1(\theta)^+}\{\Q_{\theta}(\theta,t)\} = \lim_{t\to t_1(\theta)^+}\left\{\frac{1 }{1+ \frac{\sum_{q,\l} \pi_q\pi_\l w_{q,\l}  \q_1(t,q,\l;\theta,\theta)}{  \sum_{q,\l} \pi_q\pi_\l (1-w_{q,\l})  \q_0(t,q,\l;\theta,\theta) }} \right\}.$$
We prove in this section the following result.
\begin{lemma}\label{lem:alphastargauss}
For all $\theta=(\pi,w,\sigma_0,\mu,\sigma)\in\Theta$, we have $\alpha_*(\theta)=0$ if and only if  there exists $q,\l\in\{1,\dots,Q\}$, such that $\sigma_{q,\l}\geq \sigma_0$.
\end{lemma}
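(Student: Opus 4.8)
The plan is to analyze the limit $\alpha_*(\theta)=\lim_{t\to t_1(\theta)^+}\Q_{\theta}(\theta,t)$ by splitting the group pairs $(q,\l)$ according to the sign of $\sigma_{q,\l}-\sigma_0$, and tracking which of the functionals $\q_0(t,q,\l;\theta,\theta)$ and $\q_1(t,q,\l;\theta,\theta)$ vanish as $t\downarrow t_1(\theta)$. Recall from Section~\ref{sec:studyq} (in particular \eqref{deft1t2} and Lemma~\ref{lem:forassumptioncont}) that for a pair $(q,\l)$ one has $t_{1,q,\l}(\theta)=0$ precisely when $\sigma_{q,\l}\geq \sigma_0$, and $t_{1,q,\l}(\theta)=t_0(q,\l)>0$ when $\sigma_{q,\l}<\sigma_0$; moreover on $[0,t_{1,q,\l}(\theta)]$ both $\q_0(\cdot,q,\l;\theta,\theta)$ and $\q_1(\cdot,q,\l;\theta,\theta)$ are identically $0$, and on $(t_{1,q,\l}(\theta),t_{2,q,\l}(\theta))$ they are positive and increasing. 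Since $t_1(\theta)=\min_{q,\l}t_{1,q,\l}(\theta)$, we have $t_1(\theta)=0$ iff there exists a pair with $\sigma_{q,\l}\geq\sigma_0$, and otherwise $t_1(\theta)=\min_{q,\l}t_0(q,\l)>0$.

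First I would treat the direction: \emph{if some $\sigma_{q,\l}\geq\sigma_0$ then $\alpha_*(\theta)=0$}. In that case $t_1(\theta)=0$. I would show that $\Q_\theta(\theta,t)\to 0$ as $t\downarrow 0$ by comparing the dominant behavior of the numerator and denominator of the ratio $\Q_\theta(\theta,t)=\big(1+R(t)\big)^{-1}$ with $R(t)=\frac{\sum \pi_q\pi_\l w_{q,\l}\q_1(t,q,\l;\theta,\theta)}{\sum \pi_q\pi_\l(1-w_{q,\l})\q_0(t,q,\l;\theta,\theta)}$. It suffices to prove $R(t)\to+\infty$. Fix a pair $(q^\star,\l^\star)$ achieving $\sigma_{q^\star,\l^\star}\geq\sigma_0$; for this pair both $\q_0$ and $\q_1$ tend to $0$ as $t\downarrow 0$, but I claim $\q_0(t,q^\star,\l^\star;\theta,\theta)/\q_1(t,q^\star,\l^\star;\theta,\theta)\to 0$. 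Using the relation $\Q_\theta(\theta,t)<t$ from Lemma~\ref{lem:functionQvalue} applied with a single group, or more directly the defining property $\q_0=\P(\bell(X,q,\l;\theta)\le t\mid A=0)$ together with the Bayes-ratio identity $\bell/(1-\bell)=\frac{(1-w)g_0}{w g_\nu}$, one sees that on the event $\{\bell(X,q^\star,\l^\star;\theta)\le t\}$ the likelihood ratio $g_\nu/g_0$ is bounded below by a constant times $(1-t)/t\to\infty$; hence the ratio of the null to the alternative probability of this event is $O(t/(1-t))\to 0$. This forces $R(t)\to\infty$, hence $\alpha_*(\theta)=0$. (The contributions of pairs with $\sigma_{q,\l}<\sigma_0$ vanish identically for $t$ below $t_0(q,\l)$ and only help, since they add nonnegative terms to the numerator of $R$.)

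For the converse — \emph{if all $\sigma_{q,\l}<\sigma_0$ then $\alpha_*(\theta)>0$} — I would argue that $t_1(\theta)=\min_{q,\l}t_0(q,\l)=:t_1^\star>0$, and that as $t\downarrow t_1^\star$ only the pairs $(q,\l)$ with $t_0(q,\l)=t_1^\star$ have $\q_0,\q_1$ tending to $0$; all other pairs have $\q_0(t_1^\star,q,\l;\theta,\theta)=0$ too since $t_1^\star\le t_0(q,\l)$, so in fact \emph{every} $\q_0$ and $\q_1$ term vanishes at $t=t_1^\star$. Thus the limit $\alpha_*(\theta)$ is a $0/0$ limit and I would compute it as a ratio of right-derivatives at $t_1^\star$, using the explicit derivative formula displayed in Section~\ref{sec:studyq}. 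For a pair with $t_0(q,\l)=t_1^\star$ the leading term near $t_1^\star$ is governed by $(b^2-4ac)^{1/2}_+\sim c'\sqrt{t-t_1^\star}$ (as $b^2-4ac$ has a simple zero at $t_1^\star$ in the $a>0$ case), and both $\q_0$ and $\q_1$ behave like a positive constant times $\sqrt{t-t_1^\star}$, with constants involving $\phi_0$ resp. $\phi_1$ evaluated at the common double root $-b/(2a)$. Therefore the limiting ratio $R(t_1^{\star+})$ is a finite, strictly positive number, giving $\alpha_*(\theta)=(1+R(t_1^{\star+}))^{-1}\in(0,1)$, in particular $\alpha_*(\theta)>0$.

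\textbf{Main obstacle.} The delicate point is the converse, where one must show that the $0/0$ limit is genuinely a \emph{finite, nonzero} number rather than $0$ or $\infty$. This requires matching the square-root asymptotics of $\q_0$ and $\q_1$ at $t_1^\star$ and checking that the proportionality constants — which are Gaussian densities $\phi_0,\phi_1$ evaluated at the same point $-b/(2a)|_{t=t_1^\star}$ — are both strictly positive and finite. One must also be careful that at least one pair actually attains the minimum $t_1^\star$ (so the numerator and denominator are not identically $0$ in a neighborhood) and that the $a=0$ sub-case is vacuous here since we assume $\sigma_{q,\l}\ne\sigma_0$ for all pairs. Beyond this, the argument is a routine expansion using the formulas of Section~\ref{sec:studyq}.
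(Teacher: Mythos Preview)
Your proposal is correct, and the route differs from the paper's in both directions.

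For the forward direction (some $\sigma_{q,\l}\ge\sigma_0\Rightarrow\alpha_*=0$), the paper splits into the sub-cases ``some $\sigma_{q,\l}=\sigma_0$'' and ``some $\sigma_{q,\l}>\sigma_0$'' and computes explicit Gaussian tail asymptotics (Mills ratios) for $\q_0/\q_1$ in each. Your Bayes-ratio argument is cleaner and model-free: from $\bell\le t$ one gets $(1-w_{q,\l})g_{0}\le \tfrac{t}{1-t}\,w_{q,\l}g_{\nu_{q,\l}}$ pointwise, integrate to obtain $(1-w_{q,\l})\q_0\le \tfrac{t}{1-t}\,w_{q,\l}\q_1$ for every contributing pair, and sum. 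This is exactly the inequality $\Q_\theta(\theta,t)<t$ of Lemma~\ref{lem:functionQvalue}, so once you note $t_1(\theta)=0$ the conclusion $\alpha_*=0$ is immediate; you could state the forward direction in one line this way.

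For the converse (all $\sigma_{q,\l}<\sigma_0\Rightarrow\alpha_*>0$), the paper is quite terse: it writes the limit as a ratio and asserts positivity because the relevant $\q_0,\q_1$ are ``non-zero'' just to the right of $t_1(\theta)$, without explaining why the $0/0$ limit is finite. Your square-root expansion of $\q_\delta$ near $t_1^\star$ (via $b^2-4ac\sim c'(t-t_1^\star)$ and the derivative formula in Section~\ref{sec:studyq}) actually computes the limit and makes the argument rigorous. It is worth noting that the expansion is more than you need: since here $a>0$ for every pair, the $\ell$-value $\bell(\cdot,q,\l;\theta)$ has infimum $t_{1,q,\l}>0$, so $\ell_{i,j}\ge t_1(\theta)$ almost surely, and the Fubini representation \eqref{equ:relationfubini} gives $\Q_\theta(\theta,t)\ge t_1(\theta)>0$ for all $t>t_1(\theta)$, hence $\alpha_*\ge t_1(\theta)>0$ directly. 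Combined with $\Q_\theta(\theta,t)<t$ this even yields $\alpha_*(\theta)=t_1(\theta)$, from which the full lemma follows at once via \eqref{deft1t2}. Your asymptotic computation is still valuable if one wants the precise value of $\alpha_*$ beyond just positivity.
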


To prove this, first note that  for all $t\in[0,1]$, we have 
\begin{align*}
 \min_{q,\l}\left\{\frac{w_{q,\l}}{1-w_{q,\l}} \frac{  \q_1(t,q,\l;\theta,\theta)}{  \q_0(t,q,\l;\theta,\theta) }\right\}&\leq  \frac{\sum_{q,\l} \pi_q\pi_\l w_{q,\l}  \q_1(t,q,\l;\theta,\theta)}{  \sum_{q,\l} \pi_q\pi_\l (1-w_{q,\l})  \q_0(t,q,\l;\theta,\theta) }\\
 &\leq \max_{q,\l}\left\{\frac{w_{q,\l}}{1-w_{q,\l}} \frac{  \q_1(t,q,\l;\theta,\theta)}{  \q_0(t,q,\l;\theta,\theta) }\right\}.
\end{align*}
 We distinguish among the three following cases:
\begin{itemize}
\item if  $\theta$ is such that for all $q,\l$, we have $\sigma_{q,\l}< \sigma_0$. Then $t_1(\theta)=\min_{q,\l} t_{1,q,\l}(\theta)  >0$. In that case, 
$$\alpha_*(\theta)= \frac{1 }{1+ \frac{\sum_{q,\l} \pi_q\pi_\l w_{q,\l}  \q_1(t_1(\theta)^+,q,\l;\theta,\theta)}{  \sum_{q,\l} \pi_q\pi_\l (1-w_{q,\l})  \q_0(t_1(\theta)^+,q,\l;\theta,\theta) }}>0 ,$$
because in the sums, $\q_0(t_1(\theta)^+,q,\l;\theta,\theta)$ and $\q_1(t_1(\theta)^+,q,\l;\theta,\theta)$ are non-zero for $q,\l$ such that $t_{1,q,\l}(\theta)=t_1(\theta)$ (and are zero otherwise).

\item if $\theta$ is such that for all $q,\l$, we have $\sigma_{q,\l} \leq \sigma_0$ and there exists $q,\l$ such that $\sigma_{q,\l} = \sigma_0$. Then  $t_1(\theta)=\min_{q,\l} t_{1,q,\l}(\theta)=0$. 
Also, for $t$ close  enough to $0$, we have 
$$
\frac{\sum_{q,\l} \pi_q\pi_\l w_{q,\l}  \q_1(t,q,\l;\theta,\theta)}{  \sum_{q,\l} \pi_q\pi_\l (1-w_{q,\l})  \q_0(t,q,\l;\theta,\theta) } = 
\frac{\sum_{q,\l : \sigma_{q,\l} = \sigma_0} \pi_q\pi_\l w_{q,\l}  \q_1(t,q,\l;\theta,\theta)}{  \sum_{q,\l: \sigma_{q,\l} = \sigma_0} \pi_q\pi_\l (1-w_{q,\l})  \q_0(t,q,\l;\theta,\theta) } ,
$$
because by Section~\ref{sec:studyq}, $\q_\delta(t,q,\l;\theta,\theta)=0$ for $\sigma_{q,\l} < \sigma_0$ when $t$ is close enough to $0$.
Next, by the computations of Section~\ref{sec:computlqvalgaussian}, for all $\delta\in\{0,1\}$ (see case $a=0$ therein), for all $q,\l$, with $\sigma_{q,\l}=\sigma_0$, we have
\begin{align*}
\q_\delta(t,q,\l; \theta,\theta)&=
 \left(1-\Phi_\delta\left(\frac{\mu_{q,\l}^2 \sigma_{q,\l}^{-2} +2\log\left((\sigma_{q,\l}/\sigma_0)(1/w_{q,\l}-1)(1/t-1)\right)}{2\mu_{q,\l}\sigma_{q,\l}^{-2}}\right)\right)\mathds{1}\{\mu_{q,\l}>0\}\\
 &+
 \Phi_\delta\left(\frac{\mu_{q,\l}^2 \sigma_{q,\l}^{-2} +2\log\left((\sigma_{q,\l}/\sigma_0)(1/w_{q,\l}-1)(1/t-1)\right)}{2\mu_{q,\l}\sigma_{q,\l}^{-2}}\right)\mathds{1}\{\mu_{q,\l}<0\}\\
 &=1-\Phi_\delta\left(\frac{\mu_{q,\l}}{2}  +\sigma_0^{2} \frac{\log\left((1/w_{q,\l}-1)(1/t-1)\right)}{\mu_{q,\l}}\right)\mathds{1}\{\mu_{q,\l}>0\}\\
 &+
 \Phi_\delta\left(\frac{\mu_{q,\l}}{2}  +\sigma_0^{2} \frac{\log\left((1/w_{q,\l}-1)(1/t-1)\right)}{\mu_{q,\l}}\right)\mathds{1}\{\mu_{q,\l}<0\}
 .
\end{align*}
Recall that $1-\Phi(x)\sim \phi(x)/x$ when $x\to \infty$. Hence, for all $q,\l$, when $t\to 0^+$,
\begin{align*}
\frac{\q_1(t,q,\l; \theta,\theta)}{\q_0(t,q,\l; \theta,\theta)}&\sim 
 \frac{\frac{|\mu_{q,\l}|}{2\sigma_0}  +\sigma_0 \frac{\log\left((1/w_{q,\l}-1)(1/t-1)\right)}{|\mu_{q,\l}|}}
 {\frac{-|\mu_{q,\l}|}{2\sigma_0}  +\sigma_0 \frac{\log\left((1/w_{q,\l}-1)(1/t-1)\right)}{|\mu_{q,\l}|}}
 \frac{\phi\left( \frac{-|\mu_{q,\l}|}{2\sigma_0}  +\sigma_0 \frac{\log\left((1/w_{q,\l}-1)(1/t-1)\right)}{|\mu_{q,\l}|}\right)}{\phi\left(\frac{|\mu_{q,\l}|}{2\sigma_0}  +\sigma_0 \frac{\log\left((1/w_{q,\l}-1)(1/t-1)\right)}{|\mu_{q,\l}|}\right)} \\
 &\sim \exp\left\{\log\left((1/w_{q,\l}-1)(1/t-1)\right)\right\} = \frac{1-w_{q,\l}}{w_{q,\l} } (1/t-1).
\end{align*}
because $\phi(x-y)/\phi(x+y)=e^{2xy}$ for all $x,y\in\R$. Hence, in that case, when $t\to 0^+$,
$$
\frac{\sum_{q,\l} \pi_q\pi_\l w_{q,\l}  \q_1(t,q,\l;\theta,\theta)}{  \sum_{q,\l} \pi_q\pi_\l (1-w_{q,\l})  \q_0(t,q,\l;\theta,\theta) } \sim 1/t.
$$
and $\alpha_*(\theta)=0$.

\item if $\theta$ is such that there exist $q,\l$ with $\sigma_{q,\l}> \sigma_0$, then $t_1(\theta)=\min_{q,\l} t_{1,q,\l}(\theta)=0$. 
Also, for $t$ close  enough to $0$, we have 
$$
\frac{\sum_{q,\l} \pi_q\pi_\l w_{q,\l}  \q_1(t,q,\l;\theta,\theta)}{  \sum_{q,\l} \pi_q\pi_\l (1-w_{q,\l})  \q_0(t,q,\l;\theta,\theta) } = 
\frac{\sum_{q,\l : \sigma_{q,\l} \geq \sigma_0} \pi_q\pi_\l w_{q,\l}  \q_1(t,q,\l;\theta,\theta)}{  \sum_{q,\l: \sigma_{q,\l} \geq \sigma_0} \pi_q\pi_\l (1-w_{q,\l})  \q_0(t,q,\l;\theta,\theta) } ,
$$
because by Section~\ref{sec:studyq}, $\q_\delta(t,q,\l;\theta,\theta)=0$ for $\sigma_{q,\l} < \sigma_0$ when $t$ is close enough to $0$.
Hence, we have for $t$ close  enough to $0$,
\begin{align*}
 \min_{q,\l: \sigma_{q,\l} \geq \sigma_0}\left\{\frac{w_{q,\l}}{1-w_{q,\l}} \frac{  \q_1(t,q,\l;\theta,\theta)}{  \q_0(t,q,\l;\theta,\theta) }\right\}&\leq  \frac{\sum_{q,\l} \pi_q\pi_\l w_{q,\l}  \q_1(t,q,\l;\theta,\theta)}{  \sum_{q,\l} \pi_q\pi_\l (1-w_{q,\l})  \q_0(t,q,\l;\theta,\theta) }\\
 &\leq \max_{q,\l: \sigma_{q,\l} \geq \sigma_0}\left\{\frac{w_{q,\l}}{1-w_{q,\l}} \frac{  \q_1(t,q,\l;\theta,\theta)}{  \q_0(t,q,\l;\theta,\theta) }\right\}.
\end{align*}
We know by above that when $t\to0^+$,
$$
 \min_{q,\l: \sigma_{q,\l} = \sigma_0}\left\{\frac{w_{q,\l}}{1-w_{q,\l}} \frac{  \q_1(t,q,\l;\theta,\theta)}{  \q_0(t,q,\l;\theta,\theta) }\right\}\sim \max_{q,\l: \sigma_{q,\l} = \sigma_0}\left\{\frac{w_{q,\l}}{1-w_{q,\l}} \frac{  \q_1(t,q,\l;\theta,\theta)}{  \q_0(t,q,\l;\theta,\theta) }\right\}\sim 1/t.
$$
Now, for $q,\l$ such that $\sigma_{q,\l} > \sigma_0$, we have for $t$ small enough ($a<0$, $c>0$)
\begin{align*}
&\q_\delta(t,q,\l; \theta,\theta)\\
&=  \Phi_{\delta}\left(\frac{b-(b^2-4ac)^{1/2}_+}{2|a|}\right)  + 1-\Phi_{\delta}\left( \frac{b+(b^2-4ac)^{1/2}_+}{2|a|}\right) \\
&=  1-\Phi\left(\frac{-b+2|a|\mu(\delta)+(b^2-4ac)^{1/2}_+}{2|a|\sigma(\delta)}\right)  + 1-\Phi\left( \frac{b-2|a|\mu(\delta)+(b^2-4ac)^{1/2}_+}{2|a|\sigma(\delta)}\right) \\
&=  1-\Phi\left(\frac{-|(b-2|a|\mu(\delta))|+(b^2-4ac)^{1/2}_+}{2|a|\sigma(\delta)}\right)  + 1-\Phi\left( \frac{|(b-2|a|\mu(\delta))|+(b^2-4ac)^{1/2}_+}{2|a|\sigma(\delta)}\right)
\end{align*}
Now use for all $z>0$, for $x\to \infty$,
$$
\frac{1-\Phi(-z+  x)}{ 1-\Phi(z+  x)} \sim  \frac{\phi(-z+  x)}{\phi(z+x)} = e^{2xz} \to \infty
$$
so that $1-\Phi(-z+  x)+ 1-\Phi(z+  x)\sim 1-\Phi(-z+  x)$.
As a result, since  $|(b-2|a|\mu(1))|= 2|\mu_{q,\l}| |\sigma_{q,\l}^{-2} + |\sigma_{q,\l}^{-2}-\sigma_0^{-2}| | = 2|\mu_{q,\l}| \sigma_0^{-2}$, when $t\to 0^+$,
\begin{align*}
\q_0(t,q,\l; \theta,\theta)&\sim 1-\Phi\left(\frac{-2|\mu_{q,\l}| \sigma_{q,\l}^{-2} +(b^2-4ac)^{1/2}_+}{2|a|\sigma_0}\right)\\
\q_1(t,q,\l; \theta,\theta)&\sim 1-\Phi\left(\frac{-2|\mu_{q,\l}| \sigma_0^{-2}+(b^2-4ac)^{1/2}_+}{2|a|\sigma_{q,\l}}\right)\\
&= 1-\Phi\left(
\frac{-|\mu_{q,\l}| }{\sigma_{q,\l}} +\frac{\sigma_0}{\sigma_{q,\l}} \frac{-2|\mu_{q,\l}| \sigma_{q,\l}^{-2}+(b^2-4ac)^{1/2}_+}{2|a|\sigma_0}
\right).
\end{align*}
Now use for all $z>0$, $u\in(0,1)$, for $x\to \infty$,
$$
\frac{1-\Phi(-z+ u x)}{1-\Phi(x)} \sim u^{-1} \frac{\phi(-z+ u x)}{\phi(x)} =
u e^{0.5 ( (1-u^2)x^2 + 2uz x -z^2)}\to \infty,
$$
to conclude that 
$$
 \min_{q,\l: \sigma_{q,\l} \geq \sigma_0}\left\{\frac{w_{q,\l}}{1-w_{q,\l}} \frac{  \q_1(t,q,\l;\theta,\theta)}{  \q_0(t,q,\l;\theta,\theta) }\right\},\:\:\: \max_{q,\l: \sigma_{q,\l} \geq \sigma_0}\left\{\frac{w_{q,\l}}{1-w_{q,\l}} \frac{  \q_1(t,q,\l;\theta,\theta)}{  \q_0(t,q,\l;\theta,\theta) }\right\}
$$
both tends to infinity when $t\to 0^+$. 
Hence, $\alpha^\star(\theta)=0$ in that case.

\end{itemize}

\subsubsection{Studying $t'_2(\theta_0)$}\label{sec:studytprime2}

In the case where $\theta_0=(\pi,w,\sigma_0,\mu,\sigma)$ is such that  $\forall q,\l$, $\sigma_{q,\l}\geq \sigma_0$, we have by Section~\ref{sec:studyq} that $t_{1,q,\l}(\theta_0)=0$ for all $q,\l$ and thus
\begin{align*}
t'_2(\theta_0)&= \min\{t_{1,q,\l}(\theta_0),t_{2,q,\l}(\theta_0),  \mbox{ for $q,\l$ s.t. }t_{1,q,\l}(\theta_0)\neq t_1(\theta_0) \}\\
&=\min_{q,\l}\{t_{2,q,\l}(\theta_0)\}
\end{align*}
If $\forall q,\l$, $\sigma_{q,\l}= \sigma_0$, the latter is simply $t'_2(\theta_0)=1$. Otherwise, we have
\begin{align*}
t'_2(\theta_0)
&= \min\left\{ \left(1+\frac{w_{q,\l}}{1-w_{q,\l}} \frac{\sigma_0}{\sigma_{q,\l}} \exp\left(\frac{\mu_{q,\l}^2}{2(\sigma_0^2-\sigma_{q,\l}^2)}\right)\right)^{-1},  \mbox{ for $q,\l$ s.t. } \sigma_{q,\l}> \sigma_0\right\}\\
&\geq \min_{q,\l}\left(1+\frac{w_{q,\l}}{1-w_{q,\l}}\right)^{-1} = 1-\max_{q,\l}\{ w_{q,\l}\} .
\end{align*}

\subsubsection{Bounding the continuity modulus of $q$-value fonctionals} \label{sec:computMgaussian}

Denote $\mathcal{C}$ the set \eqref{equ:forMgaussian}.
Recall that by \eqref{equmodulusq}, we have
\begin{align*} 
\mathcal{W}_{\theta_0,\q}(u)&= \sup_{q,\l} \sup_{t\in T_{\theta_0}(\mathcal{K})}\sup_{\delta\in\{0,1\}} \sup \left\{
\left|
\q_\delta(t,q,\l; \theta',\theta)-\q_\delta(t,q,\l; \theta_0,\theta_0) \right| \::\: \right. \nonumber\\
&\hspace{5cm}\left. \theta,\theta'\in \Theta, \|\theta-\theta_0\|_\infty\leq u ,\|\theta'-\theta_0\|_\infty\leq u \right\}.
\end{align*}
For short, denote $d(q,\l,\theta,t)=b^2(q,\l,\theta)-4a(q,\l,\theta)c(q,\l,\theta,t)$ for any $\theta\in \Theta$ (and $a$, $b$ and $c$ being the quantities defined by \eqref{equabc}), and consider
\begin{align*}
\mathcal{B}(\theta_0,\alpha) 
= \big\{\theta\in \Theta\::\: &\forall (q,\l)\notin \mathcal{C},\:  |a(q,\l,\theta)-a(q,\l,\theta_0)|\leq |a(q,\l,\theta_0)|/2, \\
& \:\:\:\:\forall t\in T_{\theta_0}(\mathcal{K}),\:   |d(q,\l,\theta,t)-d(q,\l,\theta_0,t)|\leq |d(q,\l,\theta_0,t)|/2, \\
&\forall (q,\l)\in \mathcal{C},\:  |b(q,\l,\theta)-b(q,\l,\theta_0)|\leq |b(q,\l,\theta_0)|/2\big\}.
\end{align*}
We check that there exists $e(\theta_0,\alpha,Q)$ such that for all $\eps\leq e(\theta_0,\alpha,Q)$, we have 
\begin{equation}\label{trucBtheta0alpha}
\{\theta\in \Theta\::\:\|\theta-\theta_0\|_\infty\leq \eps\}\subset \mathcal{B}(\theta_0,\alpha) .
\end{equation}
To see this, let $\theta\in \Theta$ with $\|\theta-\theta_0\|_\infty\leq \eps$. 
For $(q,\l)\notin \mathcal{C}$, we have $a(q,\l,\theta_0)\neq 0$ so that, since $\theta\in\Theta\mapsto a(q,\l,\theta)$ is continuous, for $\eps$ smaller than some positive number $e_1(q,\l,\theta_0,\alpha)$, we have $|a(q,\l,\theta)-a(q,\l,\theta_0)|\leq |a(q,\l,\theta_0)|/2$. In addition, by definition of $T_{\theta_0}(\mathcal{K})$, we have 
$t_{1,q,\l}(\theta_0),t_{2,q,\l}(\theta_0) \notin T_{\theta_0}(\mathcal{K})$, see \eqref{deft1t2}, and thus the sign of $d(q,\l,\theta,t)$ does not depend on $t\in T_{\theta_0}(\mathcal{K})$. Assume without loss of generality that it is positive, so that $\inf_{t\in T_{\theta_0}(\mathcal{K})} d(q,\l,\theta_0,t)>0$. Now, since $T_{\theta_0}(\mathcal{K})$ is a compact set and   $(\theta,t)\in\Theta\times T_{\theta_0}(\mathcal{K}) \mapsto d(q,\l,\theta,t)$ is continuous, we have that
$
\lim_{\theta\to\theta_0} \sup_{t\in T_{\theta_0}(\mathcal{K})}|d(q,\l,\theta,t)-d(q,\l,\theta_0,t)|=0
$
(otherwise, there exists $c>0,\theta_n\to \theta_0$ and $t_n$ such that for $n$ large, $|d(q,\l,\theta_n,t_n)-d(q,\l,\theta_0,t_n)|>c$ and we obtain a contradiction by considering any limit $t_0$ of $t_n$). As a result, there is $e_2(q,\l,\theta_0,\alpha)$ such that for $\eps\leq e_2(q,\l,\theta_0,\alpha)$,
$$
\sup_{t\in T_{\theta_0}(\mathcal{K})}|d(q,\l,\theta,t)-d(q,\l,\theta_0,t)|\leq \inf_{t\in T_{\theta_0}(\mathcal{K})} d(q,\l,\theta_0,t)/2,
$$
that is, $\forall t\in T_{\theta_0}(\mathcal{K}),\:   |d(q,\l,\theta,t)-d(q,\l,\theta_0,t)|\leq d(q,\l,\theta_0,t)/2$. Finally, consider 
$(q,\l)\in \mathcal{C}$. In that case, $a(q,\l,\theta_0)=0$ and thus $b(q,\l,\theta_0)\neq 0$. Since   $\theta\in \Theta\mapsto b(q,\l,\theta)$ is continuous, for $\eps$ smaller than some positive number $e_3(q,\l,\theta_0,\alpha)$, we have $|b(q,\l,\theta)-b(q,\l,\theta_0)|\leq |b(q,\l,\theta_0)|/2$.
Summing up, we obtain \eqref{trucBtheta0alpha} for $\eps\leq e(\theta_0,\alpha,Q)=\min_{(q,\l)\notin \mathcal{C}}\{e_1(q,\l,\theta_0,\alpha)\wedge e_2(q,\l,\theta_0,\alpha)\}\: \wedge \:\min_{(q,\l)\in \mathcal{C}}e_3(q,\l,\theta_0,\alpha)$.

We have for all $\delta\in\{0,1\}$,  $\theta,\theta'\in\Theta$ with $\|\theta-\theta_0\|_\infty\leq \eps$ and $\|\theta'-\theta_0\|_\infty\leq \eps$, for all $q,\l\in\{1,\dots,Q\}$, for all $t\in T_{\theta_0}(\mathcal{K})$, when $\eps\leq e(\theta_0,\alpha,Q)$,
\begin{align*}
\frac{\left| \q_\delta(t,q,\l; \theta',\theta) -\q_\delta(t,q,\l; \theta_0,\theta_0)   \right|}{\|\theta-\theta_0\|_\infty\vee \|\theta'-\theta_0\|_\infty}\leq Q^2 \:\max_{q,\l} \sup_{t\in T_{\theta_0}(\mathcal{K})}  \sup_{(\theta',\theta) \in \mathcal{B}(\theta_0,\alpha)^2}\|\nabla_{(\theta',\theta)} \q_\delta(t,q,\l; \cdot,\cdot)\|_\infty,
\end{align*}
where $\nabla_{(\theta',\theta)} \q_\delta(t,q,\l; \cdot,\cdot)$ denotes the gradient of the function $(\theta',\theta)\in \mathcal{B}(\theta_0,\alpha)^2\mapsto \q_\delta(t,q,\l; \theta',\theta)$.
Now, thanks to the definition of $\mathcal{B}(\theta_0,\alpha)$, the formulas given in (i)-(ii)-(iii) of Section~\ref{sec:computlqvalgaussian} are active, without indicator, which means that $(t,\theta',\theta)\in T_{\theta_0}(\mathcal{K})\times\mathcal{B}(\theta_0,\alpha)^2\mapsto \nabla_{(\theta',\theta)} \q_\delta(t,q,\l; \cdot,\cdot)$ is continuous and thus for all $\delta\in\{0,1\}$ and $\eps\leq e(\theta_0,\alpha,Q)$, the quantities
$$
\sup_{t\in T_{\theta_0}(\mathcal{K})}  \sup_{(\theta',\theta) \in \mathcal{B}(\theta_0,\alpha)^2}\|\nabla_{(\theta',\theta)} \q_\delta(t,q,\l; \cdot,\cdot)\|_\infty
$$
for $(q,\l)\in\{1,\dots,Q\}^2$, 
are below some constant that depends only $\theta_0$, $\alpha$ and $Q$.

\subsubsection{A useful lemma}
 
  \begin{lemma}\label{lem:proba}
Let $\mathcal{D}
=\{(a,b,c,\mu,\sigma^2)\in\R^2\times \R\cup\{+\infty\}\times \R\times (0,\infty) \::\: a^2+b^2\neq 0\}$. 
Then the function
\begin{align}
(a,b,c,\mu,\sigma^2)\in \mathcal{D} \mapsto \P_{U\sim\mathcal{N}(\mu,\sigma^2)}(aU^2+bU+c<0).\label{Psimusigma}
\end{align}
 is continuous on $\mathcal{D}$.
 \end{lemma}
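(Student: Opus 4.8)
The statement to prove is the continuity of the map
\[
(a,b,c,\mu,\sigma^2)\in \mathcal{D} \mapsto \P_{U\sim\mathcal{N}(\mu,\sigma^2)}(aU^2+bU+c<0),
\]
where $\mathcal{D}=\{(a,b,c,\mu,\sigma^2)\in\R^2\times \R\cup\{+\infty\}\times \R\times (0,\infty) \::\: a^2+b^2\neq 0\}$. The plan is to fix a point $(a_0,b_0,c_0,\mu_0,\sigma^2_0)\in\mathcal{D}$ and show sequential continuity there, splitting into the exhaustive cases $c_0=+\infty$, $a_0\neq 0$, and $a_0=0$ (hence $b_0\neq 0$). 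In the case $c_0=+\infty$ one checks that the probability tends to $0$ (if $a_0\geq 0$) or to $1$ (if $a_0<0$): indeed for $a_0\neq 0$ the set $\{aU^2+bU+c<0\}$ is a bounded interval (possibly empty) whose endpoints are $\tfrac{-b\pm\sqrt{b^2-4ac}}{2a}$, and as $c\to+\infty$ this interval shrinks to the empty set when $a>0$ and expands to the whole line when $a<0$; when $a_0=0$, $b_0\neq 0$, the set is a half-line $\{U<-c/b\}$ or $\{U>-c/b\}$ whose probability under a fixed-ish Gaussian again tends to $0$ or $1$. The remaining two cases with $c_0$ finite are handled uniformly by the following explicit-formula argument.

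\emph{Main argument (finite $c_0$).} For $(a,b,c,\mu,\sigma^2)$ near $(a_0,b_0,c_0,\mu_0,\sigma^2_0)$ with $c$ finite, I would write the probability in closed form using the quadratic formula. Let $\Delta=b^2-4ac$ and let $\Phi_{\mu,\sigma}$ denote the c.d.f.\ of $\mathcal{N}(\mu,\sigma^2)$; note $(x,\mu,\sigma^2)\mapsto\Phi_{\mu,\sigma}(x)=\Phi((x-\mu)/\sigma)$ is jointly continuous on $\R\times\R\times(0,\infty)$, even allowing $x=\pm\infty$ with the conventions $\Phi(-\infty)=0$, $\Phi(+\infty)=1$. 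Then:
\begin{itemize}
\item If $a_0>0$: near the base point $a>0$, and $\P(aU^2+bU+c<0)=\big(\Phi_{\mu,\sigma}(r_+)-\Phi_{\mu,\sigma}(r_-)\big)\mathds{1}\{\Delta>0\}$ with $r_\pm=\tfrac{-b\pm\sqrt{\Delta_+}}{2a}$; since $r_\pm$ and $\Delta$ are continuous in the parameters (where $\Delta>0$), and the expression is $0$ when $\Delta\le 0$, the only possible discontinuity is at $\Delta_0=0$, where $r_+=r_-$ so $\Phi_{\mu,\sigma}(r_+)-\Phi_{\mu,\sigma}(r_-)\to 0$ as well — the two pieces match and the map is continuous at the base point.
\item If $a_0<0$: symmetric, $\P=\mathds{1}\{\Delta\le 0\}+\big(1-\Phi_{\mu,\sigma}(r_+)+\Phi_{\mu,\sigma}(r_-)\big)\mathds{1}\{\Delta>0\}$ where now $r_\pm=\tfrac{-b\pm\sqrt{\Delta_+}}{2|a|}$ or the appropriate rearrangement; again the only candidate discontinuity is at $\Delta_0=0$, and there $1-\Phi(r_+)+\Phi(r_-)\to 1$, matching the indicator piece.
\item If $a_0=0$ (so $b_0\neq 0$): near the base point one may have $a$ of either sign or zero. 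The key observation is that when $a_0=0$, $\Delta_0=b_0^2>0$ stays bounded away from $0$, so both roots $r_\pm$ are real; moreover one of them, $\tfrac{-b+\operatorname{sgn}(a)\sqrt{\Delta}}{2a}$ say, diverges to $\pm\infty$ as $a\to 0$ while the other converges to $-c_0/b_0$. Using the conventions above and the joint continuity of $\Phi_{\mu,\sigma}$ including at $\pm\infty$, both the $a>0$ formula and the $a<0$ formula converge to $\big(1-\Phi_{\mu_0,\sigma_0}(-c_0/b_0)\big)\mathds{1}\{b_0<0\}+\Phi_{\mu_0,\sigma_0}(-c_0/b_0)\mathds{1}\{b_0>0\}$, which is exactly the $a=0$ value. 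So the three-branch formula glues continuously at $a_0=0$.
\end{itemize}

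\emph{Expected obstacle.} The routine parts (continuity where the formula is smooth) are immediate; the genuine work is checking the \emph{matching at the seams}: the locus $\Delta=0$ (double root) and the locus $a=0$ (degeneration of the quadratic to a linear inequality, where a root escapes to infinity). For $\Delta=0$ one must verify that the indicator term and the $\Phi$-difference term have the same limit ($0$ for $a>0$, $1$ for $a<0$), which follows from $r_+-r_-=\sqrt{\Delta}/|a|\to 0$ together with uniform continuity of $\Phi$ on compacts — but one must be careful that $r_\pm$ themselves stay bounded, which holds since $a$ is bounded away from $0$ in a neighborhood of a point with $a_0\neq 0$. For $a=0$ one must track precisely which root diverges (it depends on $\operatorname{sgn}(a)$ and $\operatorname{sgn}(b)$), and confirm that in both the $a>0$ and $a<0$ expressions the surviving finite-root term is $\Phi_{\mu,\sigma}$ evaluated at something $\to -c/b$ while the divergent-root term contributes $0$ or $1$ so as to reproduce the linear-case value; this is a short but slightly fiddly case check. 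The endpoint $c=+\infty$ is handled separately as above and is elementary. Assembling these pieces gives sequential continuity at an arbitrary point of $\mathcal{D}$, hence continuity on $\mathcal{D}$.
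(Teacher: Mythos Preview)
Your approach via explicit quadratic-formula bookkeeping is valid in principle, but it differs markedly from the paper's proof, which is a one-liner via weak convergence: write $U_n=\mu_n+\sigma_n V$ and $U=\mu_0+\sigma_0 V$ on a common $V\sim\mathcal{N}(0,1)$, observe that $a_nU_n^2+b_nU_n+c_n\to a_0U^2+b_0U+c_0$ almost surely, and conclude by the Portmanteau lemma since the limiting law is continuous (this is exactly where the hypothesis $a_0^2+b_0^2\neq 0$ enters). That argument handles all finite-$c_0$ cases at once, with no seam-matching at $\Delta=0$ or $a=0$; your route gets there too, but at the cost of the three-branch case analysis you outline.

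There is, however, a genuine error in your treatment of $c_0=+\infty$. When $a_0<0$ the set $\{x:ax^2+bx+c<0\}$ is \emph{not} a bounded interval; since the parabola opens downward, it is the complement of the interval between the roots. As $c\to+\infty$ with $a<0$ one has $\Delta=b^2-4ac\to+\infty$, the two roots go to $\pm\infty$, and hence the \emph{complement} of $[r_-,r_+]$ shrinks to the empty set. So the probability tends to $0$, not to $1$. The same is true when $a_0=0$: whether $b_0>0$ or $b_0<0$, the half-line $\{bU+c<0\}$ has threshold $-c/b\to\pm\infty$ in the direction that makes the set empty. In short, the limit at $c_0=+\infty$ is always $0$ (which is also the value of the function there, so continuity holds). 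Your stated ``limit $1$ when $a_0<0$'' would actually contradict continuity, since the function equals $0$ at $c=+\infty$. Once this is corrected, your case-by-case argument goes through.
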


\begin{proof}
Consider $(a_0,b_0)\in\R^2\backslash\{0\}$, $c_0\in\R$, $\mu_0\in\R$, $\sigma_0>0$ and some $(a_n,b_n,c_n,\mu_n,\sigma_n)$ with $(a_n,b_n,c_n,\mu_n,\sigma_n)\to (a_0,b_0,c_0,\mu_0,\sigma_0)$ as $n$ tends to infinity. Consider $V\sim \mathcal{N}(0,1)$, $U_n=\mu_n+\sigma_n V$ and $U=\mu_0+\sigma_0 V$ so that $U_n\sim \mathcal{N}(\mu_n,\sigma_n^2)$ and $U\sim \mathcal{N}(\mu_0,\sigma_0^2)$. Then 
$a_n U_n^2+b_n U_n + c_n$ converges  to $a_0 U^2+b_0 U+c_0$ almost surely and thus also in distribution.
 Since the distribution of $a_0 U^2+b_0 U+c_0$ is continuous (because $a_0$ and $b_0$ are not both zero), we have by the Portmanteau Lemma that $\P(a_n U_n^2+b_n U_n+c_n<0)$ converges to $\P(a_0 U^2+b_0 U+c_0<0)$. Finally, a similar reasoning can be applied when $c_0=+\infty$, with a limit equal to $0$. 
The continuity follows.
\end{proof}

\end{document}